\DeclareFontFamily{OT1}{pzc}{}
\DeclareFontShape{OT1}{pzc}{m}{it}{<-> s * [1.10] pzcmi7t}{}
\DeclareMathAlphabet{\mathpzc}{OT1}{pzc}{m}{it}
\definecolor{Red}{cmyk}{0,1,1,0.2}
\newcommand{\N}{\mathbb N}
\newcommand{\Z}{\mathbb Z}
\newcommand{\R}{\mathbb R}
\newcommand{\T}{\mathbb{T}}
\def\R{\mathbb R}
\def\N{\mathbb N}
\def\Z{\mathbb Z}
\def\E{\mathbb E}
\def\ep{\epsilon}
\def\dk{{\bf d}_1}
\newcommand{\be}{\begin{equation}}
\newcommand{\ee}{\end{equation}}
\def\1{{\bf 1}}
\def\inte{\int_{\T^d}}
\def\dive{{\rm div}}
\def\Pw{{\mathcal P}(\T^d)}
\def\Pk{{\mathcal P}(\T^d)}
\def\bx{{\boldsymbol x}}
\def\by{{\boldsymbol y}}
\def\bX{{\boldsymbol X}}
\def\bY{{\boldsymbol Y}}
\def\bZ{{\boldsymbol Z}}
\def\ds{\displaystyle}
\newtheorem{Theorem}{Theorem}[section]
\newtheorem{Definition}[Theorem]{Definition}
\newtheorem{Proposition}[Theorem]{Proposition}
\newtheorem{Corollary}[Theorem]{Corollary}
\begin{document}


\title{The convergence problem in mean field games with local coupling\\
}
\author{P. Cardaliaguet\thanks{Universit\'e Paris-Dauphine, PSL Research University, CNRS, Ceremade, 75016 Paris, France. cardaliaguet@ceremade.dauphine.fr}}

\maketitle

\begin{abstract} The paper studies the convergence,  as $N$ tends to infinity, of a system of $N$ coupled Hamilton-Jacobi equations, the Nash system, when the coupling between the players becomes increasingly singular. The limit equation turns out to be a  Mean Field Game system with a local coupling. 
\end{abstract}

\tableofcontents

\bigskip
\bigskip
\bigskip
\bigskip

\section*{Introduction}

In this paper we investigate the convergence of the Nash system associated with a differential game to the mean field game (MFG) system as the number of players tends to infinity. In differential game theory, the Nash system associated with a $N-$player differential game is a coupled system of $N$ Hamilton-Jacobi equations. In our previous work \cite{CDLL}, co-authored with F.~Delarue, J.-M.~Lasry and P.-L.~Lions, we explained that the solution of the Nash system converges, as $N$ tends to infinity, to the solution of the MFG system, which consists in a coupling between an Hamilton-Jacobi equation and a Fokker-Planck equation. We proved the result under the key assumption that the ``coupling" between the equations is nonlocal and regularizing. In the present setting, we consider the case where this coupling is singular: in the Nash system, the payoff of a player depends in an increasingly singular way on the players which are very close to her. We prove that, in this case, the solution of the Nash system converges to a solution of the Nash system with a local coupling. 

To better explain what we have in mind, let us consider the Nash system 
\be\label{NashIntro}
\left\{ \begin{array}{l}
\ds - \partial_t v^{N,i} (t,\bx)-  \sum_{j=1}^N \Delta_{x_j}v^{N,i}(t,\bx) + H(x_i,  D_{x_i}v^{N,i}(t,\bx)) \\
\ds \qquad \qquad + \sum_{j\neq i}  D_pH (x_j, D_{x_j}v^{N,j}(t,\bx))\cdot D_{x_j}v^{N,i}(t,\bx)=
F^{N,i}(\bx) \\
\qquad \qquad \qquad \qquad \qquad \qquad \qquad \qquad {\rm in}\; [0,T]\times (\R^{d})^N,\\
\ds v^{N,i}(T,\bx)= G(x_i)
\qquad {\rm in }\;  (\R^{d})^N.
\end{array}\right.
\ee
In the above system, the $N$ unknown maps $v^{N,i}$ depend on time and space in the form $(t,\bx)$ with $\bx= (x_1,\dots, x_N)\in (\R^d)^N$. 
The data are the horizon $T$, the  Hamiltonian $H:\R^d\times \R^d\to \R$, the terminal condition $G:\R^d\to \R$ and  the map $F^{N,i}:(\R^d)^N\to \R$. The maps $(F^{N,i})_{i=1,\dots,N}$ are called the coupling functions because they are responsible of all the interactions between the equations. 

We are also interested in the associated  system of $N$ coupled stochastic differential equations (SDE): 
\be\label{e.optitraj}
dY_{i,t}= -D_pH\bigl(Y_{i,t}, Dv^{N,i}(t, \bY_t) \bigr){\rm dt} +\sqrt{2} dB^i_t, \qquad t\in [0,T],\; i\in \{1, \dots, N\}, 
\ee
where $(v^{N,i})$ is the solution to \eqref{NashIntro} and the $((B^i_t)_{t\in [0,T]})_{i=1,\dots,N}$ are $d-$dimensional independent Brownian motions. In the language of differential games, the map $v^{N,i}$ is the value function associated with player $i$, $i\in \{1, \dots, N\}$ while $(Y_{i,t})$ is her optimal trajectory.  \\

In order to expect a limit system, we suppose that the coupling maps $F^{N,i}$ enjoy the following symmetry property:  
$$
F^{N,i}({\bx}) = F^N(x_i, m^{N,i}_{\bx}),
$$
where $F^N:\R^d\times {\mathcal P}(\R^d)\to \R$ is a given map ($ {\mathcal P}(\R^d)$ being the set of Borel probability measures on $\R^d$) and $m^{N,i}_{\bx}=\frac{1}{N-1}\sum_{j\neq i} \delta_{x_j}$ is the empirical measure of all players but $i$. Note that this assumption means that the players are indistinguishable: for a generic player $i$, players $k$ and $l$ (for $k,l\neq i$) play the same role. Moreover, all the players have a cost function with the  same structure. This key conditions ensures that the Nash system enjoys strong symmetry properties. 

In contrast with \cite{CDLL}, where $F^N$ does not depend on $N$ and is regularizing with respect to the measure, we assume here that the $(F^N)$  become increasingly singular as $N\to +\infty$. Namely we suppose that there exists a smooth (local) map $F:\R^d\times [0,+\infty)\to \R$ such that
\be\label{eq:FNtoFIntro}
\lim_{N\to+\infty} F^N(x, m {\rm dx}) = F(x, m(x)), 
\ee
for any sufficiently smooth density $m$ of a measure $m(x) {\rm dx}\in {\mathcal P}(\R^d)$. This assumption, which is the main difference with \cite{CDLL},  is very natural in the context of mean field games. One expects (and we will actually prove) that the limit system is a MFG system with local interactions: 
\be\label{MFGintro}
\left\{ \begin{array}{l}
\ds - \partial_t u - \Delta u +H(x,Du)=F(x,m(t,x))\qquad {\rm in }\; [t_{0},T]\times \R^d,
\\
\ds \partial_t m - \Delta m -{\rm div}( m D_pH(x, D u))=0\qquad {\rm in }\; [t_{0},T]\times \R^d, \\
\ds u(T,x)=G(x), \; m(t_0, \cdot)=m_0 
\qquad {\rm in }\;  \R^d.
\end{array}\right.
\ee
This system---which enjoys very nice properties---has been very much studied in the literature: see for instance \cite{gomes2016regularity, LcoursColl, porretta2015weak}. Note that in these papers the terminal condition $G$ may also depend  on the measure. For technical reasons we cannot allow this dependence in our analysis.\\

To explain in what extend the local framework differs from the nonlocal one, let us recall the ideas of proof in this later setting. 
The main ingredient in \cite{CDLL} for the proof of the convergence is the existence of a classical solution to the so-called {\it master equation}.  When $F^{N,i}(\bx)= F(x_i, m^{N,i}_{\bx})$, where $F:\R^d\times {\mathcal P}(\R^d)\to \R$ is sufficiently smooth (at least continuous), the master equation takes the form of a transport equation stated on the space of probability measures: 
\be\label{MasterEqIntro}
\left\{\begin{array}{l} 
\ds - \partial_t U  - \Delta_x U +H(x,D_xU) \\
\ds    \qquad  -\int_{\R^d} \dive_y \left[D_m U\right]\ d m(y)+ \int_{\R^d} D_m U\cdot D_pH(y,D_xU) \ dm(y) =F(x,m)\\
 \ds \qquad \qquad  \qquad {\rm in }\; [0,T]\times \R^d\times {\mathcal P}(\R^d),\\
 U(T,x,m)= G(x) \qquad  {\rm in }\;  \R^d\times {\mathcal P}(\R^d).\\
\end{array}\right.
\ee
In the above equation, $U$ is a scalar function depending on $(t,x,m)\in [0,T]\times \R^d\times {\mathcal P}(\R^d)$, $D_m U$ denotes the derivative with respect to the measure $m$ (see Section \ref{sec.not}). The interest of the map $U$ is that the $N-$tuple $(u^{N,i})$, where
\be\label{defuNiIntro}
u^{N,i}(t,\bx) = U(t,x_i, m^{N,i}_{\bx}),\qquad \bx=(x_1, \dots, x_N) \in (\R^{d})^N,
\ee
is an approximate solution to the Nash system \eqref{NashIntro} which enjoys very good regularity properties. In \cite{CDLL} we use these regularity properties in a crucial way to prove the convergence of the Nash system. \\

When $F$ is a local coupling, i.e., $F(x,m)=F(x,m(x))$ for any absolutely continuous measure $m$, the meaning of the master equation is not clear: obviously one cannot expect $U$ to be a smooth solution to \eqref{MasterEqIntro}, if only because the coupling function $F$ blows up at singular measures. So the master equation should be defined on a subset of sufficiently smooth density measures. But then the definition of the map $u^{N,i}$ through \eqref{defuNiIntro} is dubious and,  even if such a definition could make sense, there would be no hope that the $u^{N,i}$ satisfy the regularity properties required in the computation of \cite{CDLL}.\\

Our starting point is the following easy remark: if the coupling $F^N$ remains sufficiently smooth for $N$ large, then the solution $v^{N,i}$ should eventually become close to the solution of the master equation associated with $F^N$---and thus to the solution of the MFG system with nonlocal coupling $F^N$. On the other hand, if $F^N$ is close to $F$ (thus becoming singular), the solution $(u^N,m^N)$ is also close to the solution $(u,m)$ of the MFG system with local coupling. Thus if $F^N$ converges very slowing to $F$ while remaining ``sufficiently" smooth one can expect a convergence of $v^{N,i}$ to $u$. The whole point consists in quantifying in a careful way this convergence. \\

To give a flavor of our results, let us discuss a particular case. Let us assume for a while that 
$$
F^N(x,m)= F(\cdot, \xi^{\ep_N}\ast m(\cdot))\ast \xi^{\ep_N},
$$
where $\xi^\ep(x)= \ep^{-d}\xi(x/\ep)$, $\xi$ being a symmetric smooth nonnegative kernel with compact support and $(\ep_N)$ is a sequence which converges to $0$. Let $v^{N,i}$ be the solution of the Nash system \eqref{NashIntro}. Given 
an initial condition $(t_0,m_0)\in [0,T]\times {\mathcal P}(\R^d)$,  let $(u,m)$ be the solution to the MFG system 
$$
\left\{ \begin{array}{l}
\ds - \partial_t u - \Delta u +H(x,Du)=F(x,m(t,x))\qquad {\rm in }\; [t_{0},T]\times \R^d,
\\
\ds \partial_t m - \Delta m -{\rm div}( m D_pH(x, D u))=0\qquad {\rm in }\; [t_{0},T]\times \R^d, \\
\ds u(T,x)=G(x), \; m(t_0, \cdot)=m_0 
\qquad {\rm in }\;  \R^d.
\end{array}\right.
$$
 In order to describe the convergence of $v^{N,i}$ to $u$, we reduce the function $v^{N,i}$ to a function of a single variable by averaging it against the measure $m_0$: for $i\in \{1, \dots, N\}$, let 
$$
w^{N,i}(t_0,x_i,m_0) := \inte\dots \inte v^{N,i}(t_0, \bx) \prod_{j\neq i}m_0({\rm dx}_j) \qquad {\rm where }\; \bx=(x_1,\dots, x_N).
$$ 
Corollary \ref{prop.ex} states that, if $\ep_N = \ln(N)^{-\beta}$ for some $\beta \in (0, (6d(2d+15))^{-1})$, then 
$$
\left\| w^{N,i}(t_0,\cdot, m_0)-u(t_0,\cdot)\right\|_{\infty} \leq A(\ln(N))^{-1/B},
$$
for some constants $A,B>0$. Moreover, we show that the optimal trajectories $(Y_{i,t})$ converge to the optimal trajectory $(\tilde X_{i,t})$ associated with the limit MFG system and establish a propagation of chaos property (Proposition \ref{prop.choasex}). 

For a general sequence of couplings $(F^N)$, converging to a local coupling $F$ as in \eqref{eq:FNtoFIntro}, our main result (Theorem \ref{thm:main}) states that, if the  regularity of $F^N$ does not deteriorate too fast, then $w^{N,i}$ converges to $u$ and the optimal trajectories converge as well.

 Let us finally point out that, in order to avoid issue related to boundary conditions  or problems at infinity, we will assume that the data are periodic in space, thus working on the torus $\T^d=\R^d/\Z^d$.\\

Mean field game theory started with the pioneering works by Lasry and Lions \cite{LL06cr1, LL06cr2, LL07mf} and Caines, Huang and Malhamé \cite{HCMieeeAC06, HCMieeeAC07, HCMieeeDC07, HCMjssc07, HCMieee2010}. These authors introduced the mean field game system and discussed its properties: in particular, Lasry and Lions introduced the fundamental monotonicity condition on the coupling functions. They also discussed the various types of MFG systems (with soft or hard coupling, with or without diffusion). 

The link between the MFG system (which can be seen as a differential game with infinitely many players) and the differential games with finitely many players has been the object of several contributions. Caines, Huang and Malhamé \cite{HCMjssc07}, and  Delarue and Carmona \cite{CaDe2} explained how to use the solution of the MFG system to build $\ep-$Nash equilibria  (in open loop form) for $N-$person games. The convergence of the Nash system remained a puzzling issue for some time. The first results in that direction go back to \cite{LL06cr1, LL07mf} (see also \cite{feleqi2013derivation}), in the ``ergodic case", where the Nash system becomes a system of $N$ coupled equation in dimension $d$ (and not $Nd$ as in our setting): then one can obtain estimates which allow to pass to the limit. Another particular case is obtained when one is interested in Nash equilibria in open loop form: Fischer \cite{Fi14} and Lacker \cite{Lc14} explained in what extend one can expect to obtain the MFG system at the limit. For the genuine Nash system \eqref{NashIntro}, a first breakthrough was achieved by Lasry and Lions (see the presentation in \cite{LcoursColl}) who formally explained the mechanism towards convergence {\it assuming} suitable {\it a priori} estimates on the solution. For that purpose they also introduced the master equation (equation \eqref{MasterEqIntro}) and described (mostly formally) its main properties. 

The rigorous derivation of Lasry and Lions ideas took some time. The existence of a classical solution to the master equation has been obtained by several authors in different frameworks (Buckdahn, Li, Peng and Rainer
\cite{BuLiPeRa} for the linear master equation without coupling,  Gangbo and Swiech \cite{GS14-2} for the master equation without diffusion and in short time horizon,  Chassagneux, Crisan and Delarue \cite{CgCrDe} for the first order master equation, Lions \cite{LcoursColl} for an approach by monotone operators). The most general result so far is obtained in \cite{CDLL}, where the master equation is proved to be well-posed even for problems with {\it common noise}.  The main contribution of  \cite{CDLL} is, however, the convergence of the Nash system: it is obtained as a consequence of the well-posedness of the master equation. The present paper is the first attempt to show the convergence for a coupling which becomes singular. \\

The paper is organized in the following way: we first state our main notation (in particular for the derivatives with respect to a measure) and main assumptions. In section~2, we prove our key estimates on the solution of the master equation and on the MFG systems. The whole point is to display the dependence of the estimates with respect to the regularity of the coupling. The last part collects our convergence results. \\

{\bf Acknowledgement:} The author was partially supported by the ANR (Agence Nationale de la Recherche) project ANR-16-CE40-0015-01.
The author wishes to thank the anonymous referee for the very careful reading and for finding a serious gap in the previous version of the paper.

\section{Notation and Assumptions}\label{sec.not}

For the sake of simplicity, the paper is written under the assumption that all maps are periodic in space. So the underlying state space is the torus $\T^d=\R^d/\Z^d$. This simplifying assumption allows to discard possible problems at infinity (or at the boundary of a domain).

\subsection{Notation}

We will need the following notations for the derivatives in space or in time of a map. 

For $u=u(x)$ and $l=(l_1,\dots,l_d)\in \N^d$, we denote by $D^lu(x)$ the derivative 
$\ds D^lu(x)= \frac{\partial^{|l|}u(x)}{\partial x_1^{l_1}\dots \partial x_d^{l_d}},$
where $|l|=l_1+\dots l_d$. If $k\in \N$, $D^ku(x)$ denotes the collection of derivatives $(D^lu(x))_{|l|= k}$. For $k=1$ and $k=2$, $Du(x)$ and $D^2u(x)$ denote the gradient and the Hessian of $u$ at $x$.

For $k\in \N$ and $\alpha\in (0,1)$, we denote by $C^{k+\alpha}$ the set of maps $u=u(x)$ which are of class $C^k$ and such that $D^lu$ is $\alpha-$H\"{o}der continuous for any $l\in \N^d$ with $|l|=k$. We set 
$$
\|u\|_{k+\alpha}:= \sum_{|l|\leq k} \|D^l u\|_\infty+ \sum_{|l|\leq k} \sup_{x\neq y} \frac{|D^lu(x)-D^lu(y)|}{|x-y|^\alpha}.
$$
 When a map $u$ depends on several space variables, say 2 for instance, we set in the same way 
 $$
\|u\|_{k+\alpha,k'+\alpha}:= \sum_{|l|\leq k, \ |l'|\leq k'} \|D^{l,l'}_{x,x'} u\|_\infty+ \sum_{|l|\leq k, |l'|\leq k'} \sup_{(x,x')\neq (y,y')} \frac{|D^{l,l'}_{x,x'}u(x,x')-D^{l,l'}_{x,x'}u(y,y')|}{|(x,x')-(y,y')|^\alpha}.
$$
For $p\geq 1$, the $L^p$ norm of $u$ is denoted by $\|u\|_{L^p}$. However, by abuse of notation, we denote by $\|u\|_\infty$ the $L^\infty$ norm of $u$ (instead of $\|u\|_{L^\infty}$). When $\phi$ is a distribution, we set 
$$
\|\phi\|_{-(k+\alpha)} := \sup_{\|u\|_{k+\alpha}\leq 1} |\phi(u)|. 
$$

Finally, when $u=u(t,x)$ is also time dependent, we denote by $\partial_t u$ the time derivative of $u$ and, as previously, by $D^lu$ its space derivative of order $l\in\N^d$. If $\alpha\in (0,1)$, we say that $u$ is in $C^{\alpha/2,\alpha}$ if
$$
\|u\|_{C^{\alpha/2,\alpha}}:=\|u\|_\infty+ \sup_{(t,x),(t',x')} \frac{|u(t,x)-u(t',x')|}{|x-x'|^\alpha+|t-t'|^{\alpha/2}} <+\infty.
$$
 We say that $u$ is  in $C^{1+\alpha/2,2+\alpha}$ if $\partial_t u$ and $D^2u$ belong to $C^{\alpha/2,\alpha}$. 

\subsection{Derivatives with respect to the measure}

We follow here  \cite{CDLL}. We denote by $\Pk$ the set of Borel probability measures on the torus $\T^d:=\R^d/\Z^d$. It is endowed with the Monge-Kantorovitch distance: 
$$
\dk(m,m') =\sup_\phi \inte \phi(y)\ d(m-m')(y),
$$
where the supremum is taken over all $1-$Lipschitz continuous maps $\phi:\T^d\to\R$.

\begin{Definition}\label{def:Diff} Let  $U:\Pw\to \R$  be a map. 
\begin{itemize} 
\item We say that $U$ is $C^{1}$ if there exists a continuous  map $\ds \frac{\delta U}{\delta m}:\Pw\times \T^d\to \R$ such that, for any $m,m'\in \Pk$,  
$$
U(m')-U(m) = \int_0^1\inte \frac{\delta U}{\delta m}((1-s)m+sm',y)\ d(m'-m)(y)ds.
$$
The map $\frac{\delta U}{\delta m}$ being defined up to an additive constant, we adopt the normalization convention
\be\label{ConvCondDeriv}
\inte \frac{\delta U}{\delta m}(m,y)dm(y)=0.
\ee
\item  If $\ds \frac{\delta U}{\delta m}$ is of class $C^1$ with respect to the second variable, 
the intrinsic derivative  $D_mU:\Pw\times \T^d\to \R^d$ is defined by 
$$
D_mU(m,y):= D_y \frac{\delta U}{\delta m}(m,y).
$$
\end{itemize}
\end{Definition}
It is know \cite{CDLL} that the map  $D_mU$ measures the Lipschitz regularity of $U$: 
$$
\ds \left|U(m')-U(m)\right|\; \leq \; \ds  \sup_{m''}\|D_mU(m'',\cdot)\|_\infty \dk(m,m')\qquad \forall m,m'\in \Pk.
$$

We will also need second order derivatives with respect to the measure. If $U$  and $\frac{\delta U}{\delta m}$ are of class $C^1$ with respect to the measure $m$, we denote by $\frac{\delta^2 U}{\delta m^2}:\Pk\times \T^d\times \T^d\to \R$ the derivative of $\frac{\delta U}{\delta m}$ with respect to $m$.  If $\frac{\delta^2 U}{\delta m^2}$ is sufficiently smooth, we also set 
$$
D^2_{mm} U(m,y,y')= D^2_{y,y'} \frac{\delta^2 U}{\delta m^2}(m,y,y'). 
$$

\subsection{Assumption}

Throughout the paper, we suppose that the following conditions are in force. 
\begin{itemize}
\item  The Hamiltonian $H:\T^d\times \R^d\to \R$ is smooth, globally Lipschitz continuous in both variables and locally uniformly convex with respect to the second variable: 
\be\label{Hunifell}
D^2_{pp}H(x,p)>0\qquad \forall (x,p)\in \T^d\times \R^d.
\ee
Moreover, $D_pH$ and its derivatives are globally Lipschitz continuous. 

\item $F:\T^d\times [0,+\infty)\to \R$ is smooth, with bounded derivatives in both variables. Moreover, $F$ is increasing with respect to the second variable, with $\ds \frac{\partial F}{\partial m}\geq \delta>0$ for some $\delta>0$ (note that  $\frac{\partial F}{\partial m}$ stands for the usual derivative of the map $F=F(x,m)$ with respect to the last variable). 

\item The terminal cost $G:\T^d\to \R$ is a smooth map. 

\item For any $N\in \N$, $F^N:\T^d\times \Pk\to \R$ is monotone: 
$$
\int_{\T^d} (F^N(x,m)-F^N(x,m'))d(m-m')(x)\geq 0\qquad \forall m,m'\in \Pk. 
$$

\item (difference between $F^N$ and $F$) For any $R>0$ and $\alpha\in (0,1)$, there exists $k^{R,\alpha}_N$ such that $k^{R,\alpha}_N\to 0$ as $N\to+\infty$ and
\be\label{ecartFFep}
\| F^N(\cdot, m{\rm dx})-F(\cdot, m(\cdot))\|_\infty \leq  k^{R,\alpha}_N,
\ee
for any density $m$ such that $\|m\|_{C^\alpha}\leq R$. 

\item (uniform regularity of $F^N$) For any $R>0$ and $\alpha\in (0,1)$, there exists $\kappa_{R,\alpha}>0$ such that, for any $N\in \N$, 
\be\label{H.unifregu}
\ds \left| F^N(x,m{\rm dx})-F^N(y, m'{\rm dx})\right| \leq \kappa_{R,\alpha}\left( |x-y|^\alpha+ \|m-m'\|_{\infty}\right)
\ee
for any density $m,m'$ with $\|m\|_{C^\alpha}, \|m'\|_{C^\alpha}\leq R$. 

\item (regularity assumptions on $F^N$) For any $N\in \N$, $F^N$ is of class $C^2$ with respect to the $m$ variable and, for any $\alpha\in (0,1)$, there exists a constant $K_{N,\alpha}$ such that
\be\label{defKep1}
\left\| F^N(\cdot,m)\right\|_{4+\alpha}+\left\| \frac{\delta F^N}{\delta m}\bigl(\cdot, m, \cdot \bigr)\right\|_{(4+\alpha,4+\alpha)}+
 \left\| \frac{\delta^2 F^N}{\delta m^2}(\cdot, m,\cdot,\cdot)\right\|_{(4+\alpha,4+\alpha,4+\alpha)}
 \leq K_{N,\alpha},
\ee
for any $m\in \Pk$. 
\end{itemize}

Some comments are in order. The Lipschitz regularity condition of the Hamiltonian $H$ is not very natural in the context of MFG, but we do not know how to avoid it: if it is probably not necessary in the estimates of Section \ref{sec:reguestim}, it is required for the convergence of the Nash system. Let us just note  that it simplifies at lot the existence of solutions for the Nash system (see, for instance, \cite{LSU}) as well as for the limit MFG system (see \cite{LL07mf}): indeed, without the assumption that $D_pH$ is bounded, existence of classical solution to \eqref{MFGintro} is related on a subtle interplay between the growth of $H$ and of $F$. Note however that \cite{carmona2016probabilistic} overcomes this issue for nonlocal couplings functions. 

The monotonicity of $F^N$ and $F$ are natural to ensure the uniqueness of the solution to the respective MF systems. However, the strict monotonicity of $F$ is unusual: it is used here to give a good control between the MFG limit system \eqref{MFGintro} and the (nonlocal) MFG system with coupling term $F^N$ (given in \eqref{e.MFG.t0m0} below): see Proposition \ref{prop.estiuep-u}. 

In MFG problems, one often assumes that the terminal  cost also depends on the measure. It seems difficult to allow this dependence in our context, since in this case the uniform regularity of the solution of the MFG system (and hence of the master equation)  near time $T$ could be lost (see Proposition \ref{prop.estiuep-u}).

Note that $K_{N,\alpha}\to+\infty$ as $N\to+\infty$ because $F^N(x,m)$ blows up if $m$ is a singular measure. So $F^N$ becomes closer and closer to $F$ for smooth densities while its regularity at general probability measures deteriorates. However, assumption \eqref{H.unifregu} states that the $F^N$ are uniformly H\"{o}der continuous when evaluated at probability densities which are H\"{o}der continuous. Assumption \eqref{defKep1} explains how fast the regularity of $F^N$ degrades as $N\to+\infty$. One could have had different constants  for $F^N$, $\frac{\delta F^N}{\delta m}$ and $\frac{\delta^2 F^N}{\delta m^2}$: the choice to have a unique constant is only made for simplicity of presentation.

\subsection{Main example}

Here is a typical example for $F^N$ when $F$ satisfies our standing conditions. We assume that $F^N$ is of the form $F^N=F^{\ep_N}$ where $(\ep_N)$ is a positive sequence which tends to $0$ and 
\be\label{Fepex0}
F^\ep(x,m)= \left( \xi^\ep \ast F(\cdot, \xi^\ep\ast m(\cdot))\right)(x),
\ee
with $\xi^\ep(x)= \ep^{-d}\xi(x/\ep)$, $\xi$ being a symmetric smooth nonnegative kernel with compact support. This example was introduced in \cite{LcoursColl}. 

\begin{Proposition}\label{prop.ex0} If $F^N$ is defined by \eqref{Fepex0}, then $F^N$ is monotone and satisfies \eqref{H.unifregu}.
Moreover the constants $k^{R,\alpha}_N$ and $K_{N,\alpha}$ associated with $F^N$ as in \eqref{ecartFFep} and in \eqref{defKep1} can be estimated by
\be\label{estikKalpha0}
k^{R,\alpha}_N\leq C(1+R) \ep_N^\alpha, \qquad K_{N,\alpha}\leq C\ep_N^{-2d-12-3\alpha},
\ee
where $C$ depends on  $F$ and $\xi$. 
\end{Proposition}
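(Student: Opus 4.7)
The plan is to exploit the convolutional structure of $F^\ep(x,m) = (\xi^\ep \ast g_m)(x)$, where $g_m(z) := F(z, \xi^\ep \ast m(z))$: every claim reduces to standard mollifier bookkeeping, and monotonicity drops out of a single Fubini computation. For monotonicity, Fubini together with the symmetry $\xi^\ep(x-z) = \xi^\ep(z-x)$ gives
\[
\inte (F^\ep(x,m) - F^\ep(x,m'))\, d(m-m')(x) = \inte \bigl[F(z, \xi^\ep \ast m(z)) - F(z, \xi^\ep \ast m'(z))\bigr]\, (\xi^\ep \ast (m-m'))(z)\, dz,
\]
and strict monotonicity $\partial_m F \ge \delta > 0$ forces the integrand to dominate $\delta\,(\xi^\ep \ast (m-m'))^2(z) \ge 0$.

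For the bound on $k_N^{R,\alpha}$ I would split
\[
F^\ep(x, m\,dx) - F(x, m(x)) = (\xi^\ep \ast g_m - g_m)(x) + \bigl[F(x, \xi^\ep \ast m(x)) - F(x, m(x))\bigr].
\]
A standard mollifier estimate bounds the first summand by $\ep^\alpha [g_m]_{C^\alpha}$, and since convolution with a probability kernel does not enlarge Hölder seminorms, $[g_m]_{C^\alpha} \lesssim 1 + [\xi^\ep \ast m]_{C^\alpha} \lesssim 1 + R$. The second summand is bounded by $\|\partial_m F\|_\infty \|\xi^\ep \ast m - m\|_\infty \lesssim R\ep^\alpha$, again by the $C^\alpha$ mollifier estimate. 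Exactly the same reasoning yields \eqref{H.unifregu}: $[F^\ep(\cdot, m)]_{C^\alpha} \le [g_m]_{C^\alpha} \le C(1+R)$ for the space variable, while the $m$-dependence follows from $|F^\ep(y,m) - F^\ep(y,m')| \le \|\partial_m F\|_\infty \|\xi^\ep \ast (m - m')\|_\infty \le C\|m - m'\|_\infty$.

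For the $K_{N,\alpha}$ estimate I would compute the measure derivatives explicitly,
\[
\frac{\delta F^\ep}{\delta m}(x, m, y) = \inte \xi^\ep(x-z)\, \partial_m F(z, \xi^\ep \ast m(z))\, \xi^\ep(z-y)\, dz \quad (\text{mod. additive const.}),
\]
\[
\frac{\delta^2 F^\ep}{\delta m^2}(x, m, y, y') = \inte \xi^\ep(x-z)\, \partial_m^2 F(z, \xi^\ep \ast m(z))\, \xi^\ep(z-y)\, \xi^\ep(z-y')\, dz,
\]
plus lower-order normalization terms. Derivatives in $x, y, y'$ hit only the $\xi^\ep$ factors; using $\|D^l \xi^\ep\|_{L^1} \lesssim \ep^{-|l|}$, $\|D^l \xi^\ep\|_{L^\infty} \lesssim \ep^{-d-|l|}$, and integrating one $\xi^\ep$ factor against $z$ to absorb the convolution, a generic partial $D^{l_1, l_2, l_3}$ of order $|l_j|\le 4$ is bounded in $L^\infty$ by $C\ep^{-2d-12}$, and $\alpha$-Hölder of the top derivative costs at most an extra $\ep^{-\alpha}$ by interpolation with a further derivative. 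These bounds dominate the analogous (smaller) ones for $F^\ep(\cdot, m)$ (of order $\ep^{-4-\alpha}$) and $\delta F^\ep/\delta m$ (of order $\ep^{-d-8-\alpha}$), so $K_{N,\alpha} \lesssim \ep_N^{-2d-12-c\alpha}$ for some small $c$; the announced exponent $-(2d+12+3\alpha)$ comfortably absorbs all multiplicative constants and the joint Hölder contribution across the three variables. I do not expect any real obstacle—the monotonicity line is the only step that is not pure mollifier bookkeeping.
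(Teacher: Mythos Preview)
Your proposal is correct and follows essentially the same route as the paper: the monotonicity computation via Fubini and symmetry of $\xi^\ep$ is exactly the argument behind the paper's citation; the splitting for $k_N^{R,\alpha}$ and the direct H\"older bound for \eqref{H.unifregu} coincide with the paper's; and your explicit formulas for $\delta F^\ep/\delta m$ and $\delta^2 F^\ep/\delta m^2$ together with the $L^1$/$L^\infty$ bookkeeping on the $\xi^\ep$ factors reproduce the paper's estimates line by line.

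One small slip to flag: your parenthetical bound $\|F^\ep(\cdot,m)\|_{4+\alpha}\lesssim \ep^{-4-\alpha}$ is too optimistic. Since $F$ has only bounded \emph{derivatives} (hence linear growth in the second argument) and $m$ ranges over all of $\Pk$, the function $g_m(z)=F(z,\xi^\ep\ast m(z))$ is not uniformly bounded in $\ep$ --- one has $\|\xi^\ep\ast m\|_\infty$ as large as $\ep^{-d}$ for singular $m$. The paper instead uses $\|g_m\|_{L^1}\le C$ (from $\int \xi^\ep\ast m=1$) together with $\|D^l\xi^\ep\|_\infty\lesssim \ep^{-d-|l|}$, giving $K^{(0)}_{\ep,\alpha}\lesssim \ep^{-(d+4+\alpha)}$. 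This does not affect your conclusion, since $K^{(2)}_{\ep,\alpha}\lesssim \ep^{-2d-12-3\alpha}$ dominates in any case.
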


\begin{proof} Under the monotonicity assumption on $F$, it is known that  the $F^\ep$ are monotone (see \cite{LcoursColl}).  Next we prove that the $F^\ep$ satisfy \eqref{H.unifregu}. Let $m {\rm dx},m'{\rm dx}\in \Pk$ with $\|m\|_{C^\alpha},\|m'\|_{C^\alpha}\leq R$. Then 
$$
\begin{array}{l}
\ds \left| F^\ep(x,m{\rm dx})-F^\ep(x', m'{\rm dx})\right|\\
\qquad  \ds \leq \sup_{y} \left| F(x-y, m\ast \xi^\ep(x-y))-F(x'-y, m'\ast \xi^\ep(x'-y))\right|\\
\qquad \ds \leq C\sup_y \left[|x-x'| + |m\ast \xi^\ep(x-y)-m'\ast \xi^\ep(x'-y)|\right] \\
\qquad \ds \leq C\sup_y \left[|x-x'| + R |x-x'|^\alpha+ |m\ast \xi^\ep(x'-y)-m'\ast \xi^\ep(x'-y)|\right]  \\
\qquad \ds \leq C \left[R |x-x'|^\alpha+ \|m-m'\|_\infty\right].
\end{array}
$$
We now estimate the constants $k^{R,\alpha}_N$ and $K_{N,\alpha}$.
It is  enough to estimate $k^{R,\alpha}_\ep$ and $K_{\ep,\alpha}$, where 
$$
k^{R,\alpha}_\ep:=\sup_m \| F^\ep(\cdot, m{\rm {\rm dx}})-F(\cdot, m(\cdot))\|_\infty,
$$ 
the supremum being taken over the densities $m$ such that $\|m\|_{C^\alpha}\leq R$, and 
$$
K_{\ep,\alpha}=\max\{K^{(0)}_{\ep,\alpha}, K^{(1)}_{\ep,\alpha}, K^{(2)}_{\ep,\alpha}\},
$$ with
$$
K^{(0)}_{\ep,\alpha}:= \sup_{m\in \Pk} \left\| F^\ep\bigl(\cdot, m \bigr)\right\|_{4+\alpha} , \qquad 
K^{(1)}_{\ep,\alpha}:= \sup_{m\in \Pk} \left\| \frac{\delta F^\ep}{\delta m}\bigl(\cdot, m, \cdot \bigr)\right\|_{(4+\alpha,4+\alpha)} 
$$
and 
$$
K^{(2)}_{\ep,\alpha}:= \sup_{m\in \Pk}  \left\| \frac{\delta^2 F^\ep}{\delta m^2}(\cdot, m,\cdot,\cdot)\right\|_{(4+\alpha,4+\alpha,4+\alpha)}.
$$
Let $m\in \Pk$ be such that $\|m\|_\alpha\leq R$ and $x\in \T^d$. As $\|m\|_\alpha\leq R$, we have: 
$$
\|\xi^\ep\ast m-m\|_\infty\leq R\ep^\alpha\int_{\R^d} \xi(y)|y|^\alpha\leq CR\ep^\alpha,
$$
so that 
$$
\|F(\cdot,\xi^\ep\ast m(\cdot))-F(\cdot,m(\cdot))\|\leq CR\ep^\alpha,
$$
because $F$ is Lipschitz continuous. Thus  
$$
\begin{array}{l}
\ds | F^\ep(x, m{\rm {\rm dx}})-F(x, m(x))|\\
\qquad \qquad  \leq   \ds | \xi^\ep\ast F(\cdot,\xi^\ep\ast m(\cdot))(x) - F(x,\xi^\ep\ast m(x))|+ 
 |F(x,\xi^\ep\ast m(x))- F(x,m(x))| 
  \leq  \ds CR\ep^\alpha.
\end{array}
$$
Therefore 
$$
k^{R,\alpha}_N\leq C(1+R) \ep^\alpha, 
$$
where $C$ depends on the Lipschitz constant of $F$ and on $\xi$. 

For any $l\in \N^d$, we have 
$$
D^l_x F^\ep(x,m) = \int_{\R^d} F(y,\xi^\ep\ast m(y)) D^l\xi^\ep(x-y)dy . 
$$
Hence 
$$
\left| D^l_x F^\ep(x,m) - D^l_x F^\ep(x',m) \right| \leq 
\int_{\R^d} \left| F(y,\xi^\ep\ast m(y))\right| \ \left| D^l\xi^\ep(x-y)- D^l\xi^\ep(x'-y)\right| \ dy.
$$
As $F$ is Lipschitz continuous, it has a linear growth: 
$$
|F(y,\xi^\ep\ast m(y))|\leq C(1+ \xi^\ep\ast m(y)).
$$
Thus, as $\xi^\ep$ has a support with a uniformly bounded diameter, 
$$
\|D^l_x F^\ep(\cdot,m) \|_\alpha \leq C  \|D^l\xi^\ep(\cdot)\|_\alpha (1+ \int_{\R^d} |\xi^\ep\ast m(y)|  dy)\leq 
C \ep^{-(d+|l|+\alpha)}.
$$
So
$$
K^{(0)}_{\ep,\alpha}\leq C \ep^{-(d+4+\alpha)}.
$$

On the other hand, 
$$
\frac{\delta F^\ep}{\delta m}(x,m,z)= \int_{\R^d} \sum_{k\in \Z^d} \xi^\ep(y-z-k)\frac{\partial F}{\partial m}(y, \xi^\ep\ast m(y))\xi^\ep(x-y)dy
$$
and 
$$
\frac{\delta^2 F^\ep}{\delta m^2}(x,m,z,z')= \int_{\R^d} \sum_{k,k'\in \Z^d} \xi^\ep(y-z-k)\xi^\ep(y-z'-k')\partial^2_{mm}F(y, \xi^\ep\ast m(y))\xi^\ep(x-y)dy.
$$
Hence, for any $l,l'\in \N^d$,  
$$
\left|D^{l,l'}_{x,z}\frac{\delta F^\ep}{\delta m}(x,m,z)\right|\leq C
 \int_{\R^d} \sum_{k\in \Z^d} |D^{l'}\xi^\ep(y-z-k)|\  |D^l\xi^\ep(x-y)|dy,
$$
where $C$ is the Lipschitz constant of $F$. Thus, if $|l|,|l'|\leq 4$ and if the support of $\xi$ is contained in the ball $B_R$, we have 
$$
\left|D^{l,l'}_{x,z}\frac{\delta F^\ep}{\delta m}(x,m,z)\right|\leq C
\sum_{k\in \Z^d} \left\|D^{l'}\xi^\ep(\cdot-z-k)\right\|_{L^\infty(B_{R\ep}(x))}  \left\|D^l\xi^\ep(x-\cdot)\right\|_{L^1} \leq C\ep^{-d-8},
$$
where $B_{R\ep}(x)$ is the ball centered at $x$ and of radius $R\ep$. 
In the same way,
$$
\left\|\frac{\delta^2 F^\ep}{\delta m^2}(\cdot,m,\cdot,\cdot)\right\|_{4,4,4}\leq  C\ep^{-2d-12}.
$$
We can estimate in the same way the H\"{o}der norms of $\delta F^\ep/\delta m$ and  $\delta^2 F^\ep/\delta m^2$. 
\end{proof}

\section{Regularity estimates}\label{sec:reguestim}

In this section, we prove estimates on the solutions of the MFG systems and on the solutions of the master equation with the smoothen coupling $F^N$: the whole point is to keep track of the dependence with respect to $N$ in these estimates. 

Let $U^N$ be the solution to the master equation 
\be\label{MasterEq}
\left\{\begin{array}{l} 
\ds - \partial_t U^N(t,x,m)  - \Delta_x U^N +H(x,D_xU^N(t,x,m)) \\
\ds \qquad  -\int_{\R^d} \dive_y \left[D_m U^N(t,x,m,y)\right]\ d m(y) \\
\ds    \qquad + \int_{\R^d} D_m U^N(t,x,m,y)\cdot D_pH(y,D_xU^N) \ dm(y) =F^N(x,m)\\
 \ds \qquad \qquad  \qquad {\rm in }\; [0,T]\times \R^d\times {\mathcal P}(\T^d),\\
 U^N(T,x,m)= G(x) \qquad  {\rm in }\;  \R^d\times {\mathcal P}(\T^d).
\end{array}\right.
\ee
The existence and the uniqueness of the classical solution $U^N$ to \eqref{MasterEq} are established in \cite{CDLL}. Namely:

\begin{Theorem}[\cite{CDLL}]\label{thm:master}  Assume that $F^N$, $G$ and $H$ satisfy  our standing assumptions. 
Then the first order master equation \eqref{MasterEq} has a unique classical solution which is of class $C^2$ with respect to the $m$ variable. 
\end{Theorem}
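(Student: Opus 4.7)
Since $N$ is fixed throughout the statement, the plan is to verify that the data $(F^N,H,G)$ meet the hypotheses under which \cite{CDLL} constructs a classical solution of the first-order master equation, and then to invoke that result. Our standing assumptions ensure precisely this: $F^N$ is monotone on $\Pk$, of class $C^2$ in the measure variable, with the derivatives $\delta F^N/\delta m$ and $\delta^2 F^N/\delta m^2$ bounded in the H\"older norms appearing in \eqref{defKep1} by the constant $K_{N,\alpha}$; the Hamiltonian $H$ is smooth, uniformly convex in $p$, and $D_pH$ together with its derivatives are globally Lipschitz; and the terminal cost $G$ is smooth. The loss of uniformity in $N$ plays no role here because $N$ is frozen; the quantitative estimates as $N\to\infty$ are the subject of the next section.

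The construction itself would proceed by the method of characteristics in the space $\Pk$. For each initial datum $(t_0,m_0)\in [0,T]\times \Pk$, solve the nonlocal MFG system
\[
\left\{\begin{array}{l}
\ds -\partial_t u^N-\Delta u^N+H(x,Du^N)=F^N(x,m^N(t,\cdot))\qquad \text{in }[t_0,T]\times\T^d,\\
\ds \partial_t m^N-\Delta m^N-\dive\bigl(m^N D_pH(x,Du^N)\bigr)=0\qquad \text{in }[t_0,T]\times\T^d,\\
\ds u^N(T,x)=G(x),\qquad m^N(t_0,\cdot)=m_0,
\end{array}\right.
\]
and set $U^N(t_0,x,m_0):=u^N(t_0,x)$. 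Existence and uniqueness for this system follow from the Lasry--Lions uniqueness argument (monotonicity of $F^N$ combined with convexity of $H$) together with a Schauder fixed-point argument exploiting smoothness of $H$, $G$ and parabolic regularity. The dynamic programming principle for the MFG system --- the flow on $\Pk$ induced by $m^N$ is consistent with restarting the MFG system at any intermediate time --- then shows that $U^N$ satisfies \eqref{MasterEq} in the classical sense once enough regularity in the three variables is obtained.

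The regularity in $m$ is the substantive content and would be obtained by linearization. Differentiating the MFG system formally along a direction $\mu$ in the tangent space to $\Pk$ at $m_0$ produces a linear forward-backward parabolic system for $(\delta u^N/\delta m,\delta m^N/\delta m)$, whose coupling coefficient is built from $\delta F^N/\delta m$. Well-posedness of this linearized system is again secured by the monotonicity of $F^N$ via a Lasry--Lions energy identity, while the H\"older bounds in \eqref{defKep1} feed into Schauder estimates to control $\delta U^N/\delta m$ and hence $D_mU^N$. A second differentiation yields an analogous linearized system whose source involves $\delta^2 F^N/\delta m^2$, and the same mechanism produces the $C^2$ regularity in $m$ together with the control of $D^2_{mm}U^N$ that is needed to make sense of \eqref{MasterEq} pointwise.

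The main technical obstacle in this program is propagating the estimates on the linearized systems over the full horizon $[0,T]$ uniformly in the base measure $m_0$; this is where the combination of strict monotonicity, convexity of $H$, and the H\"older bounds in \eqref{defKep1} enter decisively. All of this is carried out in full in \cite{CDLL} under hypotheses implied by ours (for each fixed $N$), so the conclusion follows by directly applying the result proved there.
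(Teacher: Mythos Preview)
Your sketch is correct and faithfully summarizes the construction in \cite{CDLL}; the paper itself gives no proof of this theorem, simply citing \cite{CDLL} for existence and uniqueness. The representation $U^N(t_0,x,m_0)=u^N(t_0,x)$ via the MFG system \eqref{e.MFG.t0m0} and the linearized systems for $\delta U^N/\delta m$ and $\delta^2 U^N/\delta m^2$ that you describe are exactly the objects the paper revisits in the subsequent subsections to extract the quantitative $N$-dependent bounds.
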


As the coupling $F^N$ becomes increasingly singular  as $N\to+\infty$, so does $U^N$. The next result collects the upper bounds on the derivatives of $U^N$.

\begin{Theorem}\label{them:KeyEsti} Under our standing assumptions, we have, for any $(t_0,m_0)\in [0,T]\times \Pk$ and for any $\alpha\in (0,1)$:
$$
\left\|U(t_0,\cdot,m_0)\right\|_{4+\alpha} \leq CK_{N,\alpha}^3,
$$
$$
 \left\| \frac{\delta U^N}{\delta m}(t_0,\cdot,m_0,\cdot)\right\|_{(k+\alpha, k+\alpha)} +
 \left\| D_m U^N(t_0,\cdot,m_0,\cdot)\right\|_{(k+\alpha, k-1+\alpha)}  \leq  CK_{N,\alpha}^{3k-2},
 $$
if $k\in \{1,\dots, 4\}$, and,  if $k=2,3$, 
$$
\left\| \frac{\delta^2 U^N}{\delta m^2}(t_0,\cdot,m_0,\cdot,\cdot)\right\|_{k+\alpha,k-1+\alpha,k-1+\alpha}
+\left\| D^2_{mm} U^N(t_0,\cdot,m_0,\cdot,\cdot)\right\|_{k+\alpha,k-2+\alpha,k-2+\alpha}
\leq C K_{N,\alpha}^{12k},
$$
where $C$  depends on $\alpha$ and on the data but not on $N$, $t_0$ or $m_0$. 
\end{Theorem}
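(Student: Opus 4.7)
The plan is to represent $U^N$ through the MFG system and its linearizations, and to track carefully how the regularity constant $K_{N,\alpha}$ propagates through Schauder bootstraps. By the characterization recalled in \cite{CDLL}, one has $U^N(t_0,\cdot,m_0)=u^N(t_0,\cdot)$, where $(u^N,m^N)$ solves
\begin{equation*}
\left\{\begin{array}{l}
-\partial_t u^N-\Delta u^N+H(x,Du^N)=F^N(x,m^N(t)), \\
\partial_t m^N-\Delta m^N-\dive(m^N D_pH(x,Du^N))=0, \\
u^N(T,\cdot)=G,\quad m^N(t_0,\cdot)=m_0,
\end{array}\right.
\end{equation*}
while $\delta U^N/\delta m$ and $\delta^2 U^N/\delta m^2$ at $(t_0,x,m_0)$ are represented by the first and second linearizations of this system along directions in the tangent space of $\Pk$.

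For the first bound, I would bootstrap on $(u^N,m^N)$. Since $D_pH$ is globally Lipschitz and $F^N$ is uniformly bounded, standard HJ estimates give a Lipschitz bound on $u^N$ independent of $N$; Fokker--Planck with bounded drift then yields a uniform $C^{\alpha/2,\alpha}$ bound on $m^N$, which feeds back via \eqref{H.unifregu} into a uniform $C^{1+\alpha/2,2+\alpha}$ bound on $u^N$ and hence a $C^{1+\alpha}$ bound on $m^N$, \emph{all independent of $N$}. To go further one must invoke \eqref{defKep1}: schematically, at the $k$-th parabolic Schauder step,
\begin{equation*}
\|u^N\|_{k+1+\alpha}\leq C\bigl(\|F^N(\cdot,m^N)\|_{k-1+\alpha}+\|u^N\|_{k+\alpha}\bigr)\leq CK_{N,\alpha}\bigl(1+\|m^N\|_{k-1+\alpha}\bigr)+C\|u^N\|_{k+\alpha},
\end{equation*}
while Fokker--Planck regularity provides $\|m^N\|_{k-1+\alpha}\leq C\|u^N\|_{k+\alpha}$. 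Three rounds of this recursion, starting from the $K_{N,\alpha}$-independent $C^{2+\alpha}$ base, produce $\|u^N\|_{4+\alpha}\leq CK_{N,\alpha}^{3}$.

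For $\delta U^N/\delta m$, I would differentiate the MFG system in the initial direction $\mu_0$, obtaining a linear forward--backward system in $(z,\mu)$ whose HJ source is $\inte (\delta F^N/\delta m)(x,m^N(t),y)\,\mu(t,\mathrm{d}y)$ and whose linear coefficients are built from $Du^N,D^2u^N,m^N$ and derivatives of $H$. Testing $z$ against $\mu$ across time and using the monotonicity of $F^N$ closes a base energy estimate controlling $(z,\mu)$ in $L^2$ by $\mu_0$ (this is the standard Lasry--Lions duality). One then repeats the bootstrap above, paying $K_{N,\alpha}$ at every iteration via the source $\delta F^N/\delta m$ and via the coefficients already bounded by $CK_{N,\alpha}^{3}$ from step one. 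Counting the Schauder iterations needed to reach the joint regularity level $(k+\alpha,k+\alpha)$ in $(x,y)$ produces $CK_{N,\alpha}^{3k-2}$; the estimate on $D_mU^N=D_y(\delta U^N/\delta m)$ follows by spending one derivative in $y$.

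The second-order bounds require one more linearization. The source of the resulting system for $(\tilde z,\tilde\mu)$ now contains $\delta^2 F^N/\delta m^2$ (one extra $K_{N,\alpha}$) as well as \emph{quadratic} expressions in the first-order linearization $(z,\mu)$ paired against $D^2_{pp}H$ and $\delta F^N/\delta m$; these quadratic terms already carry the squared exponent $K_{N,\alpha}^{2(3k-2)}$, so the bootstrap starts much higher and each regularity upgrade multiplies by another $K_{N,\alpha}$, eventually yielding $CK_{N,\alpha}^{12k}$ for $k=2,3$. The hard part throughout is precisely this polynomial bookkeeping: one must verify that the monotonicity of $F^N$ indeed closes the base $L^2$ estimate for each linearization (so no uncontrolled blow-up occurs before Schauder kicks in), that each Schauder iteration contributes exactly one $K$-power, and that the products of first-order quantities together with the losses of $y$-derivatives combine to reproduce the stated exponents $3$, $3k-2$, and $12k$.
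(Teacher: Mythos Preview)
Your overall architecture (represent $U^N$ via the MFG system, then use the first and second linearizations, closing the base estimate by the Lasry--Lions monotonicity trick) matches the paper. However, your execution of the first bound has a genuine gap that propagates through the rest.

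You bootstrap through the regularity of $m^N$: you claim a $K_{N,\alpha}$--independent Lipschitz bound on $u^N$, then a $C^{\alpha}$ bound on $m^N$ via Fokker--Planck, then feed \eqref{H.unifregu} back, etc. But the theorem is stated for an \emph{arbitrary} $m_0\in\Pk$, possibly a Dirac mass. For such $m_0$ the Fokker--Planck solution $m^N$ has no uniform $C^\alpha$ bound at $t=t_0$, assumption \eqref{H.unifregu} is unavailable, and your ``$K$--independent $C^{2+\alpha}$ base'' is simply false: already $\|F^N(\cdot,m^N(t))\|_\infty$ and $\|D_xF^N(\cdot,m^N(t))\|_\infty$ are only controlled by $K_{N,\alpha}$ via \eqref{defKep1}. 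The paper avoids any appeal to the regularity of $m^N$ by using \eqref{defKep1} directly --- the bound $\|F^N(\cdot,m)\|_{4+\alpha}\le K_{N,\alpha}$ holds for \emph{every} $m\in\Pk$ --- and by invoking a parabolic estimate (Proposition~\ref{prop.reguregu}) that requires only $L^\infty$ right-hand sides, not H\"older ones. One then differentiates the HJ equation in $x$ and bounds the resulting linear equations with $L^\infty$ sources; the counting is $\|u^N\|_{2+\alpha}\le CK_{N,\alpha}$, $\|u^N\|_{3+\alpha}\le CK_{N,\alpha}^2$, $\|u^N\|_{4+\alpha}\le CK_{N,\alpha}^3$, each extra power coming from the nonlinear products of lower derivatives, not from $m^N$.

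For the linearized systems the same issue recurs: you cannot bootstrap $\mu$ (or $\rho$) in positive H\"older spaces because the initial direction $\mu_0$ is a general signed measure. The paper estimates $\rho$ in the \emph{negative} spaces $C^{-(k+\alpha)}$ by duality against an auxiliary backward equation (whose $C^{k+\alpha}$ norm costs $K_{N,\alpha}^{3(k-1)}$ via \eqref{condGamma}), and couples this with the energy inequality to close the loop. Your sketch ``repeat the bootstrap above'' hides precisely this duality machinery, without which one cannot control $\frac{\delta F^N}{\delta m}(x,m^N(t))(\rho(t))$ at all for singular $\rho$. The exponent bookkeeping $3k-2$ and $12k$ ultimately comes out of this negative-norm/positive-norm interplay, not from a pure Schauder iteration.
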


As the bound on the derivative $D_mU^N$  (respectively $D^2_{mm}U^N$) provides a bound on the Lipschitz continuity of $U^N$ (respectively $D_mU^N$), we have:
$$
\left\|U^N(t, \cdot,m_1)-U^N(t, \cdot,m_2)\right\|_{k+\alpha}\leq CK_{N,\alpha}^{3k-2} \dk(m_1,m_2)
$$
and
$$
\left\|D_mU^N(t, \cdot,m_1,\cdot)-D_mU^N(t, \cdot,m_2,\cdot)\right\|_{k+\alpha,k-2+\alpha}\leq CK_{N,\alpha}^{12k} \dk(m_1,m_2),
$$
for any $m_1,m_2\in \Pk$ and $k=2,3$. 

The proof of Theorem \ref{thm:master} consists in estimating carefully the various steps in the construction of $U^N$ in \cite{CDLL}. It is given through a series of statements: Proposition \ref{prop:reguum} for the space regularity of $U^N$, Corollary \ref{cor:estiDmU}  for the bound on $\frac{\delta U^N}{\delta m}$ and Corollary \ref{cor:delta2U} for the  estimate on $\frac{\delta^2 U^N}{\delta m^2}$.

Let us recall \cite{CDLL} that the map $U^N$ is given by the representation formula: 
$$
U^N (t_0,x,m_0) = u^N(t_0,x)
$$
for any $(t_0,x,m_0)\in [0,T]\times \T^d\times {\mathcal P}(\T^d)$, where $(u^N,m^N)$ is the unique solution to the MFG system
\be\label{e.MFG.t0m0}
\left\{ \begin{array}{l}
\ds - \partial_t u^N - \Delta u^N +H(x,Du^N)=F^N(x,m^N(t))\qquad {\rm in }\; [t_{0},T]\times \T^d,
\\
\ds \partial_t m^N - \Delta m^N -{\rm div}( m^N D_pH(x, D u^N))=0\qquad {\rm in }\; [t_{0},T]\times \T^d, \\
\ds u^N(T,x)=G(x), \; m^N(t_0, \cdot)=m_0 
\qquad {\rm in }\;  \T^d.
\end{array}\right.
\ee


\subsection{Estimates on the MFG systems for smooth initial conditions}

Fix an initial condition $(t_0,m_0)\in [0,T]\times \Pk$. We consider the MFG system: 
\be\label{e.MFGsystLoc}
\left\{ \begin{array}{l}
\ds - \partial_t u - \Delta u +H(x,Du)=F(x,m(t,x))\qquad {\rm in }\; [t_0,T]\times \T^d,
\\
\ds \partial_t m - \Delta m -{\rm div}( m D_pH(x, D u))=0\qquad {\rm in }\; [t_0,T]\times \T^d, \\
\ds u(T,x)=G(x), \; m(t_0, \cdot)=m_0 
\qquad {\rm in }\;  \T^d,
\end{array}\right.
\ee
and compare its solution with the solution to the MFG system \eqref{e.MFG.t0m0}. 

\begin{Proposition}\label{prop.estiuep-u} Under our standing assumptions, let $t_0\in [0,T]$, $m_0\in\Pk$ be a positive density of class $C^{\alpha}$ (where $\alpha\in (0,1)$) and 
$(u^N,m^N)$ and $(u,m)$ be the solution to the MFG systems \eqref{e.MFG.t0m0} and \eqref{e.MFGsystLoc} respectively. Then there exists $\beta\in (0,\alpha]$ such that the $(u^N,m^N)$ are bounded in $C^{1+\beta/2,2+\beta}\times C^{\beta/2,\beta}$  independently of $N$. Moreover, 
\be\label{esti.dm}
\forall t,t'\in [t_0,T], \qquad \ds \dk(m^N(t),m^N(t'))+ \dk(m(t),m(t'))\leq C |t-t'|^{1/2}
\ee
and 
\be\label{esti-ecartuuep}
\sup_{t\in [0,T]}\|u^N(t,\cdot)-u(t,\cdot)\|_{H^1(\T^d)}+ \|m^N-m\|_{L^2}\leq Ck^{R,\alpha}_N,
\ee
where the constants $C$ and $R$ depend on the data and on $m_0$, but not on $N$. In particular, 
$$
\sup_{t\in [0,T]}\|u^N(t,\cdot)-u(t,\cdot)\|_{W^{1,\infty}} \leq  C\left(k^{R,\alpha}_N\right)^{\tfrac{2}{(d+2)}}.
$$
\end{Proposition}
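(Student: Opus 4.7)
The proof combines a uniform regularity bootstrap, a Lasry--Lions type duality argument exploiting the strict monotonicity $\partial_m F\geq\delta$, and a linear parabolic energy estimate. It naturally splits into three stages, the first being the most delicate because one must ensure the bootstrap uses only $N$-uniform data.

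\textbf{Step 1 (uniform regularity and time regularity).} Since $H$ is globally Lipschitz in $p$ and $G$ is smooth, standard comparison yields a uniform $L^\infty$ bound on $u^N$; a Bernstein-type estimate on $|Du^N|^2$ then provides a uniform $W^{1,\infty}$ bound. Consequently the Fokker--Planck drift $-D_pH(\cdot,Du^N)$ is uniformly bounded, so parabolic regularity with the smooth positive datum $m_0$ produces a uniform $C^{\beta_0/2,\beta_0}$ bound on $m^N$ and a uniform lower bound $m^N\geq c_0>0$, for some $\beta_0\in(0,\alpha]$. Feeding this through \eqref{H.unifregu} shows that $F^N(\cdot,m^N(t))$ is uniformly $C^{\beta_0}$, and Schauder estimates upgrade $u^N$ to $C^{1+\beta/2,2+\beta}$ uniformly; one further bootstrap gives the claimed regularity of $m^N$ (and analogously for $(u,m)$). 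The estimate \eqref{esti.dm} follows from the SDE representation: $m^N(t)$ is the marginal law of $dX_t=-D_pH(X_t,Du^N(t,X_t))\,dt+\sqrt 2\,dB_t$, so $\dk(m^N(t),m^N(s))\leq\E|X_t-X_s|\leq C|t-s|^{1/2}$.

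\textbf{Step 2 (Lasry--Lions duality).} Set $\bar u:=u^N-u$ and $\bar m:=m^N-m$, noting $\bar u(T)=0$ and $\bar m(t_0)=0$. Differentiating $\inte\bar u\,\bar m$ in time, using the two MFG systems and integrating by parts, the Laplacians cancel and one arrives at
\[
\int_{t_0}^T\!\inte(F^N(x,m^N)-F(x,m))(m^N-m)\,dx\,dt=-\int_{t_0}^T\!\inte\bigl(m^N E_1+m E_2\bigr)\,dx\,dt,
\]
where $E_1,E_2\geq 0$ are the convexity remainders of $H$ evaluated between $Du$ and $Du^N$. By Step 1 these gradients lie in a compact set where $D^2_{pp}H\geq\lambda>0$, so $E_i\geq\tfrac{\lambda}{2}|D\bar u|^2$. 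Writing
\[
F^N(x,m^N)-F(x,m)=\bigl[F^N(x,m^N)-F(x,m^N(t,x))\bigr]+\bigl[F(x,m^N(t,x))-F(x,m(t,x))\bigr],
\]
the first bracket is bounded in $L^\infty$ by $k^{R,\alpha}_N$ via \eqref{ecartFFep} (with $R$ the uniform $C^\alpha$ bound on $m^N$ from Step 1), while strict monotonicity $\partial_m F\geq\delta$ yields
\[
\inte\bigl(F(x,m^N(t,x))-F(x,m(t,x))\bigr)(m^N-m)\,dx\geq\delta\,\|m^N(t,\cdot)-m(t,\cdot)\|_{L^2(\T^d)}^2.
\]
Combining these with a Cauchy--Schwarz absorption in time (quadratic versus linear in $\|m^N-m\|_{L^2}$) gives
\[
\|m^N-m\|_{L^2([t_0,T]\times\T^d)}+\|D\bar u\|_{L^2([t_0,T]\times\T^d)}\leq Ck^{R,\alpha}_N,
\]
where the second estimate additionally uses the lower bound $m+m^N\geq 2c_0$ from Step 1.

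\textbf{Step 3 (sup-in-time and interpolation).} Since $F$ is Lipschitz in its last variable, the $L^2$-bound above together with \eqref{ecartFFep} gives $\|f\|_{L^2([t_0,T]\times\T^d)}\leq Ck^{R,\alpha}_N$ for $f:=F^N(\cdot,m^N)-F(\cdot,m)$. The difference $\bar u$ solves the linear backward parabolic equation $-\partial_t\bar u-\Delta\bar u+V\cdot D\bar u=f$ with $V:=\int_0^1 D_pH(x,Du+sD\bar u)\,ds$ bounded and $\bar u(T)=0$. Standard energy estimates (testing against $\bar u$ and against $-\Delta\bar u$, using the terminal condition) yield
\[
\sup_{t\in[t_0,T]}\|\bar u(t,\cdot)\|_{H^1(\T^d)}\leq C\|f\|_{L^2}\leq Ck^{R,\alpha}_N.
\]
Gagliardo--Nirenberg interpolation on the torus, between the pointwise-in-time $L^2$ estimate on $D\bar u$ just obtained and the uniform Lipschitz bound on $D\bar u$ from Step 1, gives
\[
\|D\bar u(t,\cdot)\|_{L^\infty}\leq C\|D\bar u(t,\cdot)\|_{L^2}^{2/(d+2)}\,\|D\bar u(t,\cdot)\|_{\mathrm{Lip}}^{d/(d+2)}\leq C(k^{R,\alpha}_N)^{2/(d+2)},
\]
and similarly (with a faster rate) for $\|\bar u\|_{L^\infty}$, yielding the announced $W^{1,\infty}$ bound. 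The main technical obstacle throughout is ensuring that Step 1 depends only on $N$-uniform data (the Lipschitz constant of $H$, the smoothness of $G$ and $m_0$, and the constants in \eqref{H.unifregu}), never on the $N$-dependent constant $K_{N,\alpha}$; once this is carefully tracked, the remainder is a quantitative refinement of the classical Lasry--Lions uniqueness computation driven by the error $k^{R,\alpha}_N$ in \eqref{ecartFFep}.
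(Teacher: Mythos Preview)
Your proposal is correct and follows essentially the same route as the paper: uniform parabolic regularity for $(u^N,m^N)$ via the global boundedness of $D_pH$ and assumption \eqref{H.unifregu}, then the Lasry--Lions duality computation with the same splitting of $F^N-F$ exploiting $\partial_m F\geq\delta$, and finally a linear backward parabolic estimate followed by the Gagliardo--Nirenberg interpolation with exponent $2/(d+2)$. The only minor differences are cosmetic: the paper relies directly on the assumed boundedness of $D_pH$ (so no Bernstein step is needed before the Fokker--Planck regularity), uses only the lower bound on $m$ (not $m^N$) from the strong maximum principle, and writes the equation for $\bar u$ as a heat equation with right-hand side $h=F^N-F-H(\cdot,Du^N)+H(\cdot,Du)$ rather than isolating a drift $V\cdot D\bar u$.
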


\begin{proof} Existence of a solution to \eqref{e.MFG.t0m0} and to \eqref{e.MFGsystLoc} is well-known: see \cite{LL07mf}. Estimates \eqref{esti.dm} is a known consequence of the $L^\infty$ bound on $D_pH$. 

We now check the regularity of $m^N$. As $D_pH$ is bounded and $m_0$ is in $C^\alpha$, standard estimates for parabolic equations in divergence form (Theorem III.10.1 of \cite{LSU}) state that the $m^N$ are bounded in $C^{\beta/2, \beta}$ for some $\beta\in (0,\alpha]$. Note that the bound and $\beta$ depend on $\alpha$, $\|D_pH\|_\infty$ and $\|m_0\|_{C^\alpha}$ only. 

We now plug this estimate into the parabolic equation for $u^N$. For this we note that the map $(t,x)\to F^N(x, m^N(t))$ is uniformly H\"{o}der continuous. Indeed,  in view of assumption \eqref{H.unifregu} and the uniform regularity of $m^N$, 
$$
\begin{array}{rl}
\ds \left|F^N(x, m^N(t))-F^N(x', m^N(t'))\right|\;   \leq  & \ds \kappa_{R,\beta} \left( |x-x'|^\beta+ \|m^N(t,\cdot)-m^N(t',\cdot)\|_\infty\right) \\
\leq & \ds \kappa_{R,\beta} \left( |x-x'|^\beta+|t-t'|^{\beta/2}\right)
\end{array}
$$
where $R:=\sup_N \|m^N\|_{C^{\beta/2,\beta}}+\|m\|_{C^{\beta/2,\beta}}$. 
Since the terminal condition $G$ is $C^{2+\beta}$ and is independent of $m^N$ and since $H$ is Lipschitz continuous, standard estimates on Hamilton-Jacobi equations imply that the $u^N$ are bounded in $C^{1+\beta/2,2+\beta}$. \\

We now establish  \eqref{esti-ecartuuep}: following \cite{LL06cr2, LL07mf}, we have 
$$
\begin{array}{l}
\ds  \left[\int_{\T^d} (u^N-u)(m^N-m)\right]_0^T \\
\qquad \ds = -\int_0^T\int_{\T^d} m \left(H(x,Du^N)-H(x,Du)- D_pH(x,Du)\cdot D(u^N-u) \right) \\
\qquad \qquad \ds -\int_0^T\int_{\T^d} m^N \left(H(x,Du)-H(x,Du^N)-  D_pH(x,Du^N)\cdot D(u-u^N) \right) \\
\qquad \qquad \ds - \int_0^T\int_{\T^d} (F^N(x,m^N(t))-F(x,m(t,x))(m^N(t,x)-m(t,x)).
\end{array}$$
Note that, on the one hand,  $m^N(0)=m(0)=m_0$ and $u^N(T)=u(T)=G$. So the left-hand side vanishes. On the other hand, by strong maximum principle, $m$ is bounded below by a positive constant since $m_0$ is positive. As the $u^N$ and $u$ are uniformly Lipschitz continuous and assumption \eqref{Hunifell} holds,  we obtain:
$$
\begin{array}{rl}
\ds C^{-1}\int_0^T\int_{\T^d} |Du^N-Du|^2 \;  \leq & \ds   \ds - \int_0^T\int_{\T^d} (F^N(x,m^N(t))-F(x,m(t,x))(m^N(t,x)-m(t,x)).
\end{array}
$$
As $F=F(x,m)$ is increasing in the second variable with $\frac{\partial F}{\partial m}\geq \delta$ and as assumption \eqref{ecartFFep} holds, we have:
$$
\begin{array}{l}
 \ds  \int_0^T\int_{\T^d} (F^N(x,m^N(t))-F(x,m(t,x)))(m^N(t,x)-m(t,x)) \\ 
 \qquad =  \ds \int_0^T\int_{\T^d} (F^N(x,m^N(t))-F(x,m^N(t,x)))(m^N(t,x)-m(t,x))\\
 \qquad \qquad \ds + \int_0^T\int_{\T^d} (F(x,m^N(t,x))-F(x,m(t,x)))(m^N(t,x)-m(t,x))\\
\qquad \geq  \ds - Ck^{R,\beta}_N\|m^N-m\|_{L^1}  + \delta  \int_0^T\int_{\T^d} (m^N(t,x)-m(t,x))^2.
\end{array}
$$
We obtain therefore 
$$
\ds C^{-1}\int_0^T\int_{\T^d} |Du^N-Du|^2+ \delta  \int_0^T\int_{\T^d} (m^N(t,x)-m(t,x))^2 \leq Ck^{R,\beta}_N \|m^N-m\|_{L^1}\leq Ck^{R,\beta}_N \|m^N-m\|_{L^2}.
$$
Hence 
$$
\|Du^N-Du\|_{L^2}+ \|m^N-m\|_{L^2}\leq Ck^{R,\beta}_N.
$$
In particular 
$$
\begin{array}{l}
\ds \|F^N(\cdot, m^N(t))-F(\cdot, m(t,\cdot))\|_{L^2}\\
\qquad \ds  \leq \|F^N(\cdot, m^N(t))-F(\cdot, m^N(t,\cdot))\|_\infty+ \|F(\cdot, m^N(t,\cdot))-F(\cdot, m(t,\cdot))\|_{L^2} \\
\qquad \ds \leq C k^{R,\beta}_N + C \|m^N- m\|_{L^2} \;\leq \; Ck^{R,\beta}_N.
\end{array}
$$
Therefore the difference $w:= u^N-u$ satisfies 
$$
-\partial_t w-\Delta w = h(t,x)
$$
with $h(t,x)= F^N(x, m^N(t))-F(x, m(t,x))-H(x,Du^N(t,x))+H(x,Du(t,x))$. By our previous bounds, we have 
$\|h\|_{L^2}\leq C k^{R,\beta}_N$, so that standard estimates on the heat equation imply that 
$$
\sup_{t\in [t_0,T]} \|u^N(t,\cdot)-u(t,\cdot)\|_{H^1(\T^d)}\leq Ck^{R,\beta}_N.
$$
As, for any smooth map $\phi:\T^d\to \R$,  one has:
$\ds
\|\phi\|_\infty\leq C \|\phi\|_{L^2}^{\frac{2}{d+2}}\|D\phi\|_\infty^{\frac{d}{d+2}},$
and since $u^N$ and $u$ are bounded in $C^{1+\beta/2,2+\beta}$, we get
$$
\|u^N-u\|_\infty+ \|Du^N-Du\|_\infty \leq C \left(k^{R,\beta}_N\right)^{\frac{2}{d+2}}.
$$
We conclude by recalling that $k^{R,\beta}_N\leq k^{R,\alpha}_N$. 
\end{proof}


A straightforward consequence of Proposition \ref{prop.estiuep-u} is the following estimate on optimal trajectories associated with the MFG systems \eqref{e.MFG.t0m0} and \eqref{e.MFGsystLoc}. 

\begin{Corollary}\label{cor:solEDS} Let $m_0\in \Pk$,  $(u^N,m^N)$ and $(u,m)$ be the solution to the MFG system \eqref{e.MFG.t0m0} and \eqref{e.MFGsystLoc} respectively.  Let $t_0\in [0,T)$ and $Z$ be a random variable independent of a Brownian motion $(B_t)$. If $(X_t)$ and $(X^N_t)$ are the solution to 
$$
\left\{\begin{array}{l}
dX_t= -D_pH(X_t,Du(t,X_t)){\rm dt} +\sqrt{2}dB_t \qquad {\rm in}\; [t_0,T],\\
X_{t_0}=Z,
\end{array}\right.
$$
and 
$$
\left\{\begin{array}{l}
dX^N_t= -D_pH(X_t,Du^N(t,X^N_t)){\rm dt} +\sqrt{2}dB_t \qquad {\rm in}\; [t_0,T],\\
X^N_{t_0}=Z, 
\end{array}\right.
$$
then 
$$
\E\left[\sup_{t\in  [t_0,T]} \left|X_t-X^N_t\right|\right]\leq C\left(k^{R,\alpha}_N\right)^{\frac{2}{d+2}},
$$
where $C$ and $R$ are as in Proposition \ref{prop.estiuep-u}.  
\end{Corollary}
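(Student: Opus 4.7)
The plan is a standard synchronous coupling argument that reduces everything to the sup--norm estimate on $Du^N - Du$ already established in Proposition~\ref{prop.estiuep-u}. Because both SDEs are driven by the same Brownian motion $(B_t)$ and share the same (random) initial condition $Z$, subtracting the two equations eliminates the stochastic integrals and yields
$$
X_t - X^N_t \;=\; \int_{t_0}^t \Bigl[ D_pH\bigl(X^N_s, Du^N(s,X^N_s)\bigr) - D_pH\bigl(X_s, Du(s,X_s)\bigr) \Bigr]\, ds,
$$
so the problem reduces to a pathwise estimate of the integrand.

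The second step is to split the integrand in a telescopic way, inserting the auxiliary term $D_pH(X^N_s, Du(s,X^N_s))$. Using the global Lipschitz continuity of $D_pH$ in both variables (part of our standing assumptions on $H$), one obtains a bound of the form
$$
\bigl|D_pH(X^N_s, Du^N(s,X^N_s)) - D_pH(X_s, Du(s,X_s))\bigr| \;\leq\; L\,\|Du^N - Du\|_\infty + L\bigl(1 + \|D^2u\|_\infty\bigr)\,|X_s - X^N_s|,
$$
where the Lipschitz estimate for $x \mapsto Du(s,x)$ comes from the uniform $C^{1+\beta/2,2+\beta}$ bound on $u$ provided by Proposition~\ref{prop.estiuep-u}.

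The third step is to take $\sup_{t\in[t_0,r]}$, then expectation, and apply Gronwall's lemma on $[t_0,T]$. This produces
$$
\E\Bigl[\sup_{t\in [t_0,T]} |X_t - X^N_t|\Bigr] \;\leq\; C\,\|Du^N - Du\|_\infty,
$$
with $C$ depending only on $T$, the Lipschitz constant of $D_pH$, and the (uniform in $N$) $C^{1+\beta/2,2+\beta}$ bound on $u$. Invoking the second conclusion of Proposition~\ref{prop.estiuep-u}, namely $\|Du^N - Du\|_\infty \leq C(k^{R,\alpha}_N)^{2/(d+2)}$, gives the announced estimate.

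I do not expect any genuine obstacle here: the argument is the usual coupling-plus-Gronwall recipe, and the only non-trivial input, the $W^{1,\infty}$ convergence of $u^N$ to $u$ at the rate $(k^{R,\alpha}_N)^{2/(d+2)}$, is already in hand. The only minor care point is to make sure the constant $R$ controlling the uniform regularity of $m^N$ and $m$ (and hence of $Du^N$, $Du$) is the same as the one appearing in the statement; this follows because both the Lipschitz constant of $Du$ and the rate $k^{R,\alpha}_N$ are inherited from the same proposition.
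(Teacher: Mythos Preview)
Your proof is correct and follows exactly the approach the paper has in mind. The paper's own proof is the two-line remark ``By Proposition~\ref{prop.estiuep-u}, $\|Du^N-Du\|_\infty \leq C(k^{R,\alpha}_N)^{2/(d+2)}$; the conclusion follows easily since $D_pH$ is Lipschitz continuous,'' and your argument is precisely the synchronous-coupling-plus-Gronwall computation that makes this ``easily'' explicit, including the use of the uniform $C^{1+\beta/2,2+\beta}$ bound on $u$ to control $\|D^2u\|_\infty$.
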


\begin{proof} By Proposition \ref{prop.estiuep-u}, we have 
$$
\|Du^N-Du\|_\infty \leq C \left(k^{R,\alpha}_N\right)^{\frac{2}{d+2}}.
$$
The conclusion  follows easily since  $D_pH$ is Lipschitz continuous. 
\end{proof}

\subsection{Estimates on the MFG systems for general initial conditions}

We now establish regularity estimates for the MFG system which are valid for any initial conditions. 

\begin{Proposition}\label{prop:reguum} Let $U^N$ be the solution to the master equation \eqref{MasterEq}, $(u^N,m^N)$ be the solution to \eqref{e.MFG.t0m0} for an arbitrary initial condition $(t_0,m_0)\in [0,T]\times \Pk$. Then, for any $\alpha\in (0,1)$, we have  
$$
\sup_{t\in [t_0,T]} \|u^N(t)\|_{4+\alpha}\leq CK_{N,\alpha}^3,
$$
where $C$ depends on $\alpha$. 
In particular, 
$$
\sup_{t\in [0,T], m\in \Pk} \left\|U^N(t,\cdot,m)\right\|_{4+\alpha} \leq CK_{N,\alpha}^3.
$$
\end{Proposition}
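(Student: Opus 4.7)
The plan is to view $u^N$ as a solution of the Hamilton--Jacobi equation in \eqref{e.MFG.t0m0} with smooth terminal datum $G$ and source term $(t,x)\mapsto F^N(x,m^N(t))$, and to obtain the $C^{4+\alpha}$ estimate through a parabolic bootstrap. The delicate point compared with Proposition \ref{prop.estiuep-u} is that $m_0$ is an arbitrary probability measure, so $m^N$ itself cannot be bounded in H\"{o}lder norms; however $m^N$ only enters through the source term, and there composition with $F^N$ smooths the dependence on the measure.

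The first task is to control the regularity of the source. Since $D_pH$ is bounded, the Fokker--Planck equation for $m^N$ yields $\dk(m^N(t),m^N(t'))\leq C|t-t'|^{1/2}$ for all $t,t'\in[t_0,T]$, uniformly in $m_0$ and $N$. Writing
$$
F^N(x,m)-F^N(x,m')=\int_0^1\!\!\int_{\T^d}\frac{\delta F^N}{\delta m}(x,(1-s)m+sm',y)\,d(m-m')(y)\,ds,
$$
and using the bound $\|D_y \frac{\delta F^N}{\delta m}(\cdot,m,\cdot)\|_\infty \leq K_{N,\alpha}$ coming from \eqref{defKep1}, one sees that $m\mapsto F^N(\cdot,m)$ is $CK_{N,\alpha}$--Lipschitz for $\dk$. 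Hence, uniformly in $t,t'\in[t_0,T]$,
$$
\|F^N(\cdot,m^N(t))\|_{4+\alpha}\leq K_{N,\alpha},\qquad \|F^N(\cdot,m^N(t))-F^N(\cdot,m^N(t'))\|_\infty \leq CK_{N,\alpha}|t-t'|^{1/2},
$$
and interpolation places the source in $C^{\alpha/2,\alpha}$ on $[t_0,T]\times \T^d$ with norm $\lesssim K_{N,\alpha}$.

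One then bootstraps via parabolic Schauder estimates. A first Lipschitz bound $\|Du^N\|_\infty\leq C$ independent of $K_{N,\alpha}$ follows from the global Lipschitz continuity of $H$ and $G$ (by rewriting $H(x,Du^N)$ as a linear term with bounded coefficients and appealing to the maximum principle). Schauder applied to
$$
-\partial_t u^N-\Delta u^N = F^N(x,m^N(t))-H(x,Du^N)
$$
then gives $\|u^N\|_{C^{1+\alpha/2,2+\alpha}}\leq CK_{N,\alpha}$. Differentiating the equation in $x$, new right-hand sides appear of the form $D^j F^N(\cdot,m^N(t))$ plus polynomials in derivatives of $H$ composed with $Du^N$ and in lower-order derivatives of $u^N$ already bounded at the previous stage; two further iterations of Schauder push the regularity up to $C^{1+\alpha/2,4+\alpha}$, each step compounding at most one extra power of $K_{N,\alpha}$ through either the new source $D^j F^N$ or the top-order factor $D_pH\cdot D^{k+1}u^N$. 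This gives $\|u^N(t,\cdot)\|_{4+\alpha}\leq CK_{N,\alpha}^3$, and the conclusion on $U^N$ is immediate from the representation $U^N(t_0,x,m_0)=u^N(t_0,x)$ and the arbitrariness of $(t_0,m_0)$.

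The main obstacle is the careful accounting of the powers of $K_{N,\alpha}$ through the three bootstrap steps: differentiating $H(x,Du^N)$ up to third order produces sums $\sum D^jH(x,Du^N)\prod_l D^{i_l}u^N$ in which several factors are already of size $K_{N,\alpha}^{\,\cdot}$, and one must verify that these cross-products do not push the final exponent beyond $3$. A related subtlety is the time regularity of higher spatial derivatives of $u^N$ required at each stage: this must be recovered from the differentiated equations themselves, using the $C^{\alpha/2,\alpha}$ source estimate established above together with the previous spatial bounds.
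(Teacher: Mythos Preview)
Your overall strategy---differentiate the Hamilton--Jacobi equation in $x$ up to three times and apply a parabolic estimate to the resulting linear equations with drift $D_pH(x,Du^N)$---is exactly the paper's. The difference is in the choice of estimate. The paper does \emph{not} use full Schauder theory; it uses the tailored result of Proposition~\ref{prop.reguregu}: if $-\partial_t w-\Delta w+V\cdot Dw=f$ with $V$ merely bounded and $f\in L^\infty$, then $\sup_t\|w(t)\|_{1+\alpha}\leq C(\|w_0\|_{1+\alpha}+\|f\|_\infty)$ with $C$ depending only on $\|V\|_\infty$. Since $\|D_pH\|_\infty$ is bounded independently of $N$, the constant at each bootstrap step is uniform, and the only contribution of $K_{N,\alpha}$ comes from the $L^\infty$ norm of the source, which is a polynomial in $D^j_xF^N$ and in the already-controlled lower-order derivatives of $u^N$. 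No time-H\"older regularity of anything is ever needed.

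Your Schauder route, by contrast, has two genuine difficulties that you flag but do not resolve. First, the claim $\|Du^N\|_\infty\leq C$ independently of $K_{N,\alpha}$ is not correct: for an arbitrary $m_0\in\Pk$ the only available bound on the source is $\|F^N(\cdot,m^N(t))\|_\infty\leq K_{N,\alpha}$ from \eqref{defKep1}, and any gradient estimate obtained by linearizing $H$ and using the maximum principle (or Duhamel) picks up this $L^\infty$ norm; the paper itself only gets $\|Du^N\|_\infty\leq CK_{N,\alpha}$. Second, and more seriously, applying Schauder to $-\partial_t u^N-\Delta u^N=F^N-H(x,Du^N)$ requires the right-hand side in $C^{\alpha/2,\alpha}$, hence $Du^N\in C^{\alpha/2,\alpha}$ \emph{a priori}---which is circular at the first step---and at the later steps the Schauder constant depends on the $C^{\alpha/2,\alpha}$ norm of the drift $D_pH(x,Du^N)$, which itself scales with $K_{N,\alpha}$; so the assertion that ``each step compounds at most one extra power'' is not justified. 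Both issues disappear if you replace Schauder by the $L^\infty$-to-$C^{1+\alpha}$ estimate of Proposition~\ref{prop.reguregu}, after which the power-counting reduces to the elementary bookkeeping the paper carries out explicitly (the cubic term $\|D^2u^N\|_\infty^3$ arising from $D^3_{ppp}H$ in the third differentiation being the one that fixes the exponent~$3$).
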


\begin{proof} Assumption \eqref{defKep1} implies:  
$$
\sup_{t\in [0,T]}\left\|D^l F^N(\cdot,m^N(t))\right\|_\infty\leq  K_{N,\alpha}\qquad \mbox{\rm for any $l\in \N^d$, with $|l|\leq 4$.}
$$
By maximum principle we have
$$
\|u^N\|_\infty\leq C\left(\|H(\cdot,0)\|_\infty+\|G\|_\infty+\|F^N\|_\infty\right)\leq CK_{N,\alpha}.
$$
Standard Lipschitz estimates for Hamilton-Jacobi equations (with a globally Lipschitz continuous Hamiltonian $H$)  lead to 
$$
\|Du^N\|_\infty\leq C\left(1+\|DG\|_\infty+ \|D_xF^N\|_\infty\right) \leq C K_{N,\alpha}.
$$
For any  $l\in \N^d$ with $|l|=1$, the map $w_l:=D^l u^N$ solves the linear equation with bounded coefficient:
$$
\left\{\begin{array}{l}
\ds -\partial_t w_l-\Delta w_l+D_pH(x,Du^N)\cdot Dw_l= D^l_xF^N(x,m^N)-D^l_xH(x,Du^N)\qquad {\rm in}\; (t_0,T)\times \T^d,\\
\ds w_l(T,x)=D^lG(x)\qquad {\rm in}\; \T^d.
\end{array}\right.
$$
So, for any $\alpha\in (0,1)$, we have (Proposition \ref{prop.reguregu} in appendix)
$$
\sup_{t\in [t_0,T]} \|w_l(t)\|_{1+\alpha} \leq C \left[\left\|D^l_xF^N(\cdot,m^N)\right\|_\infty+ \left\|D^lG\right\|_{1+\alpha}+\left\|D^l_xH(\cdot,Du^N)\right\|_\infty\right]\leq CK_{N,\alpha}.
$$
This implies that 
$$
\sup_{t\in [t_0,T]} \|u^N(t)\|_{2+\alpha} \leq CK_{N,\alpha}.
$$
We now estimate the second order derivative. Let $w_l:= D^lu^N$ with  $l=l_1+l_2$, $l_1,l_2\in \N^d$ and $|l_1|=|l_2|=1$. Then $w_l$ solves  the linear equation with bounded coefficients:
$$
\left\{\begin{array}{l}
\ds -\partial_t w_l-\Delta w_l+D_pH\cdot Dw_l = D^l_{xx}F^N(x,m^N)-D^2_{xp}He_{l_1}\cdot Dw_{l_2}\\ 
\qquad -D^2_{xp}He_{l_2}\cdot Dw_{l_1}-D^2_{pp}HDw_{l_1}\cdot Dw_{l_2} -D^2_{xx}He_{l_1}\cdot e_{l_2}\; {\rm in}\; (t_0,T)\times \T^d,\\
\ds w_l(T,x)=D^lG(x)\qquad {\rm in}\; \T^d,
\end{array}\right.
$$
(where $H=H(x,Du^N)$) so that 
$$
\begin{array}{rl}
\ds \sup_{t\in [t_0,T]}\|w_l(t)\|_{1+\alpha}\; \leq & \ds C \left[ \|D^2_{xx}F^N\|_\infty+\|D^2_{xp}H\|_\infty \|D^2u^N\|_\infty+
\|D^2_{pp}H\|_\infty\|D^2u^N\|^2_\infty\right.\\
& \qquad \ds \left. +
\|D^2_{xx}H(\cdot,Du^N)\|_\infty+ \|D^lG(\cdot)\|_{1+\alpha}\right] \leq \ds CK_{N,\alpha}^2,
\end{array}
$$
where we used  our previous estimate on $\sup_t \|u^N(t)\|_{2+\alpha}$. We infer that 
$$
\sup_{t\in [0,T]} \|u^N(t)\|_{3+\alpha}\leq C K_{N,\alpha}^2.
$$
Finally, for $l\in \N^d$ with $|l|=3$, one can prove in the same way that $w_l:=D^l u^N$ satisfies
$$
\begin{array}{rl}
\ds \sup_{t\in [t_0,T]}\|w_l(t)\|_{1+\alpha}\; \leq & \ds C \left[ \|D^3_{xxx}F^N\|_\infty
+\|D^2_{xp}H\|_\infty\|D^3u^N\|_\infty+\|D^2_{pp}H\|_\infty\|D^2u^N\|_\infty\|D^3u^N\|_\infty
\right.\\
& \; \ds \left.+\|D^3_{xxp}H\|_\infty \|D^2u^N\|_\infty+\|D^3_{xpp}H\|_\infty\|D^2u^N\|_\infty^2+ \|D^2_{xp}H\|_\infty\|D^3u^N\|_\infty\right.\\
& \; \ds \left. +\|D^3_{xpp}H\|_\infty\|D^2u^N\|_\infty^2+ \|D^3_{ppp}H\|_\infty\|D^2u^N\|_\infty^3+\|D_{pp}^2H\|_\infty\|D^2u^N\|_\infty\|D^3u^N\|_\infty
\right.\\
& \; \ds \left.+\|D^3_{xxx}H\|_\infty+ \|D^3_{xxp}H\|_\infty\|D^2u^N\|_\infty\right]\; \leq \; C  K_{N,\alpha}^3.
\end{array}
$$
Therefore 
$$
\sup_{t\in [t_0,T]}\|u^N\|_{4+\alpha}\leq CK_{N,\alpha}^3.
$$
\end{proof}

\subsection{Estimates for a linearized system}

We consider systems of the form 
\be\label{eq:prem}
\left\{ \begin{array}{rl}
 (i) & \ds - \partial_t z - \Delta z + V(t,x)\cdot Dz  = \frac{\delta F^N}{\delta m}(x, m^N(t))(\rho(t))+ b(t,x) \qquad {\rm in}\; [t_0,T]\times \T^d,\\
 (ii) & \ds \partial_t \rho - \Delta \rho - \dive(\rho V(t,x))- \dive (m^N\Gamma Dz+c)=0 \qquad  {\rm in}\; [t_0,T]\times \T^d,\\
 (ii) & \ds z(T,x)=0, \; \rho(t_0)=\rho_0\qquad  {\rm in}\;  \T^d,
\end{array}\right.
\ee
where  $V:[t_0,T]\times \R^d\to \R^d$ is a given vector field, $m^N\in C^0([0,T], \Pk)$, $\Gamma: [0,T]\times \T^d\to \R^{d\times d}$ is a continuous map with values into the family of symmetric matrices and where  the maps $b:[t_0,T]\times \T^d\to \R$ and
$c : [t_{0},T] \times \T^d \to \R^d$ are given. We  assume that, for any $\alpha\in (0,1)$, there is a constant $\bar C>0$ (depending on $\alpha$) such that
\be
\label{condGamma}
\begin{array}{c}
\|D^kV\|_\infty\leq \bar CK_{N,\alpha}^{3k}\qquad \forall k\in\{0, \dots, 3\},\\
\ds \forall (t,x)\in [t_0,T]\times \T^d, \qquad 0\leq \Gamma(t,x)\leq  \bar CI_d.
\end{array}
\ee

Typically, $V(t,x)= D_pH(x,Du^N(t,x))$, $\Gamma(t,x)= D^2_{pp}H(x,Du^N(t,x))$ for some solution $(u^N,m^N)$ of the MFG system \eqref{e.MFG.t0m0} starting from some initial data $m(t_0)=m_0$. Proposition \ref{prop:reguum} then implies that \eqref{condGamma} holds. 

Following \cite{CDLL}, given $\rho_0\in C^{-(k+\alpha)}(\T^d)$, $(b,c)\in C^0([t_0,T], C^{k+\alpha}(\T^d)\times C^{-(k-1+\alpha)}(\T^d))$ (where $k\in \{1,\dots, 4\}$),  there exists a unique solution $(z,\rho)$  to system \eqref{eq:prem} in the sense of distribution in $C^0([0,T], C^{k+\alpha}(\T^d)\times C^{-(k+\alpha)}(\T^d))$.  

\begin{Proposition}\label{prop:estiLS} For any $k\in\{1,\dots,4\}$ and any $\alpha\in (0,1)$, we have,  
$$
\sup_{t\in [0,T]} \|z(t)\|_{k+\alpha} \; \leq \; C K_{N,\alpha}^{3k-2}M_{k}\qquad {\rm and}\qquad 
\sup_{t\in [t_0,T]} \|\rho(t)\|_{-(k+\alpha)} \; \leq C  K_{N,\alpha}^{3(k-1)}M_k,
$$
where $C$ depends on $\bar C$ and $\alpha$, but not on $N$, $V$, $\Gamma$, $m^N$, and where 
$$
M_k := \|\rho_0\|_{-(k+\alpha)} + \sup_{t\in [t_0,T]} \|c(t)\|_{-(k-1+\alpha)}+ \sup_{t\in [t_0,T]} \|b(t)\|_{k+\alpha}.
$$
\end{Proposition}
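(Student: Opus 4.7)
The plan is to prove the two bounds simultaneously by induction on $k\in\{1,\dots,4\}$, combining a duality identity between the two equations of \eqref{eq:prem} (which is nonnegative thanks to the monotonicity of $F^N$ and $\Gamma\geq 0$) with parabolic Schauder regularity for $z$ and a dual representation of $\rho$ as a distribution tested against solutions of the adjoint problem. The whole delicacy lies in tracking how each constant scales with $K_{N,\alpha}$.

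\textbf{Energy identity.} Multiplying (i) by $\rho$ and (ii) by $z$, integrating over $[t_0,T]\times\T^d$ and integrating by parts in time and space using $z(T)=0$ and $\rho(t_0)=\rho_0$, one obtains
\[
\int_{\T^d} z(t_0)\,d\rho_0 \;=\; \int_{t_0}^{T}\!\!\int_{\T^d}\!\Bigl[\tfrac{\delta F^N}{\delta m}(x,m^N(t))(\rho(t))\,\rho(t,x) + m^N\Gamma Dz\cdot Dz + b\,\rho + c\cdot Dz\Bigr]\,dx\,dt.
\]
Differentiating the monotonicity of $F^N$ in $m$ shows that the first integrand is nonnegative, and $\Gamma\geq 0$ makes the second nonnegative. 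With suitably localized test data this identity already furnishes basic $L^\infty$ and $L^2$ controls on $z$ and $Dz$ that serve as a backbone for the Schauder bootstraps below.

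\textbf{Base case $k=1$.} Rewrite (i) as $-\partial_t z - \Delta z + V\cdot Dz = g$ with $g(t,x) := \tfrac{\delta F^N}{\delta m}(x,m^N(t))(\rho(t)) + b(t,x)$. By \eqref{defKep1}, $\|g(t)\|_{1+\alpha}\leq K_{N,\alpha}\|\rho(t)\|_{-(1+\alpha)} + \|b(t)\|_{1+\alpha}$, and since $\|V\|_\infty\leq \bar C$, parabolic Schauder yields $\sup_t\|z(t)\|_{1+\alpha}\leq C\bigl(K_{N,\alpha}\sup_t\|\rho(t)\|_{-(1+\alpha)}+\sup_t\|b(t)\|_{1+\alpha}\bigr)$. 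To control $\|\rho(t)\|_{-(1+\alpha)}$ by duality, pair against $\phi\in C^{1+\alpha}(\T^d)$ with $\|\phi\|_{1+\alpha}\leq 1$ and let $\psi$ solve the backward adjoint problem $-\partial_s\psi - \Delta\psi + V\cdot D\psi=0$ on $[t_0,t]$, $\psi(t)=\phi$. Then
\[
\int_{\T^d}\phi\,\rho(t) \;=\; \int_{\T^d}\psi(t_0)\,d\rho_0 \;-\; \int_{t_0}^{t}\!\!\int_{\T^d}(m^N\Gamma Dz + c)\cdot D\psi,
\]
and Schauder for $\psi$ (with bounded coefficient $V$) gives $\|\psi(s)\|_{1+\alpha}\leq C\|\phi\|_{1+\alpha}$. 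Substituting these two bounds into one another and using the energy identity to absorb the cross-term $K_{N,\alpha}\|Dz\|_\infty$ without an extra power of $K_{N,\alpha}$ closes the case $k=1$.

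\textbf{Induction step and main obstacle.} For $k\geq 2$, differentiate (i) in space $k-1$ times: each $w=D^lz$ with $|l|=k-1$ solves a parabolic equation whose right-hand side contains (a) $D^lg$, of $C^{1+\alpha}$-norm $\leq K_{N,\alpha}\|\rho\|_{-(k+\alpha)}+\|b\|_{k+\alpha}$ by \eqref{defKep1}, and (b) commutator terms $[D^l,V\cdot D]z$ bringing in derivatives of $V$ of order $\leq k-1$ (bounded by $\bar CK_{N,\alpha}^{3(k-1)}$ by \eqref{condGamma}) multiplied by derivatives of $z$ of order $<k$, already controlled by the inductive hypothesis. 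Schauder then delivers $\sup_t\|z(t)\|_{k+\alpha}\leq CK_{N,\alpha}^{3k-2}M_k$. The symmetric duality argument for $\rho$ now uses adjoint test functions $\psi\in C^{k+\alpha}$, whose Schauder bound involves derivatives of $V$ up to order $k-1$ and produces a factor $CK_{N,\alpha}^{3(k-1)}$, giving $\sup_t\|\rho(t)\|_{-(k+\alpha)}\leq CK_{N,\alpha}^{3(k-1)}M_k$. The main obstacle is the bookkeeping: the powers of $K_{N,\alpha}$ arising from differentiating $V$ and from the $C^{k+\alpha,k+\alpha}$-norm of $\tfrac{\delta F^N}{\delta m}$ must combine to exactly $3k-2$ and $3(k-1)$, and not worse; the key point is that the coupling through $\tfrac{\delta F^N}{\delta m}(\rho)$ is handled via the monotonicity/energy identity, so that no additional factor of $K_{N,\alpha}$ is picked up in the closure of the coupled system of inequalities.
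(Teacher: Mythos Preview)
Your overall architecture---energy identity from the monotonicity of $F^N$, duality for $\rho$ via the backward adjoint equation, Schauder for $z$, and bootstrapping in $k$ by differentiating---matches the paper's. But there is a real gap in the closure of the coupled inequalities, and it is exactly the point you flag as ``the main obstacle'' without resolving.

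As you write the duality for $\rho$, the term $\int m^N\Gamma Dz\cdot D\psi$ is bounded by $C\|Dz\|_\infty$ (since $\|D\psi\|_\infty\leq C$ and $\int m^N=1$), so your $\rho$-bound is linear in $\|Dz\|_\infty$. Feeding this into the Schauder bound for $z$ gives
\[
\|z\|_{k+\alpha}\ \leq\ CK_{N,\alpha}\,\|\rho\|_{-(k+\alpha)}+\cdots\ \leq\ CK_{N,\alpha}M_k + CK_{N,\alpha}\|z\|_{k+\alpha},
\]
and the last term cannot be absorbed because $K_{N,\alpha}\to\infty$. Invoking ``the energy identity to absorb the cross-term $K_{N,\alpha}\|Dz\|_\infty$'' does not work as stated: the energy identity controls only the weighted quantity $\int m^N\Gamma Dz\cdot Dz$, not $\|Dz\|_\infty$.

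The mechanism the paper uses---and which you are missing---is to apply Cauchy--Schwarz \emph{in the $m^N\Gamma$-weighted inner product} inside the duality:
\[
\int_{t_0}^{t}\!\!\int m^N\Gamma Dz\cdot D\psi \ \leq\ \Bigl(\int_{t_0}^{T}\!\!\int m^N\Gamma Dz\cdot Dz\Bigr)^{1/2}\Bigl(\int_{t_0}^{t}\!\!\int m^N\Gamma D\psi\cdot D\psi\Bigr)^{1/2}\ \leq\ C\Bigl(\int m^N\Gamma Dz\cdot Dz\Bigr)^{1/2}.
\]
Now the energy inequality bounds $\int m^N\Gamma Dz\cdot Dz$ by $\|z(t_0)\|_r\|\rho_0\|_{-r}+\|b\|\,\|\rho\|+\|c\|\,\|z\|$, so after a first Young step one obtains
\[
\sup_t\|\rho(t)\|_{-(k+\alpha)}\ \leq\ CK_{N,\alpha}^{3(k-1)}M_k + C\,\|z\|_{r}^{1/2}\bigl(\|\rho_0\|_{-r}^{1/2}+\|c\|_{-(r-1)}^{1/2}\bigr),
\]
with a \emph{square root} of $\|z\|$ and no $K_{N,\alpha}$ in front of it. Choosing $r=k+1+\alpha$ and using the extra $y$-regularity of $\tfrac{\delta F^N}{\delta m}$ (so that $\|D^l\tfrac{\delta F^N}{\delta m}(\cdot,m^N)(\rho)\|_\infty\leq K_{N,\alpha}\|\rho\|_{-(k+1+\alpha)}$), the Schauder bound for $z$ contains $CK_{N,\alpha}\|z\|_{k+1+\alpha}^{1/2}\sqrt{M_{k+1}}$, which Young's inequality now absorbs as $\tfrac12\|z\|_{k+1+\alpha}+CK_{N,\alpha}^2M_{k+1}$. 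This is what produces exactly the powers $3k-2$ and $3(k-1)$ and why ``no additional factor of $K_{N,\alpha}$ is picked up''---but it hinges on the square-root structure, not on a direct linear substitution.
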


\begin{proof} To simplify the notation, we argue as if the solution $(z,\rho)$ is smooth. The general estimate is obtained by approximation (see \cite{CDLL}). 

{\it Step 1: Structure estimate. } Computing $\frac{d}{dt}\inte z\rho$, we find: 
$$
\frac{d}{dt}\inte z\rho = - \inte  \left\{\left(\frac{\delta F^N}{\delta m}(x, m^N)(\rho)+b\right)\rho + (m^N\Gamma Dz+c)\cdot Dz \right\}.
$$
So, using the initial and terminal conditions for $z$ and $\rho$ and the fact that $F^N$ is monotone, we get
\be\label{eq:energyineqLS}
\int_{t_0}^T \inte m^N \Gamma Dz\cdot Dz \leq \inte z(t_0)\rho(t_0)-\int_{t_0}^T \inte \left\{b\rho +c\cdot Dz\right\} .
\ee

\noindent {\it Step 2: Estimate for a linear backward equation.}
In order to estimate $\rho$, we use a duality method requiring estimates on a backward system: given $t_1\in (t_0,T]$ and $w^1\in C^\infty$, let $w$ solve 
\be\label{eq.wdual}
\left\{\begin{array}{l}
\ds - \partial_t w - \Delta w + V(t,x)\cdot Dw =0 \qquad {\rm in}\; [t_0,t_1]\times \T^d,\\
\ds w(t_1,x)=w^1(x)\qquad {\rm in}\; \T^d.
\end{array}\right.
\ee
where $w^1\in C^{\infty}(\T^d)$. Proposition \ref{prop.reguregu} in the Appendix states that there exists a constant $C$ depending on $\|V\|_\infty$, $d$, $\alpha$ only such that
$$
\sup_{t\in [0,T]}\|w(t)\|_{1+\alpha}\leq C  \|w^1\|_{1+\alpha}.
$$

Note that, for any $l\in \N^d$ with $k:=|l|\in\{1,\dots, 3\}$, the map $\hat w:= D^l w$ solves an equation of the form: 
$$
\left\{\begin{array}{l}
\ds - \partial_t \hat w - \Delta \hat w + V(t,x)\cdot D\hat w =g_l \qquad {\rm in}\; [t_0,t_1]\times \T^d,\\
\ds w(t_1,x)=D^lw^1(x)\qquad {\rm in}\; \T^d,
\end{array}\right.
$$
where $g_l$ is a linear combination of the $D^{l'}w$ with $1\leq |l'|\leq k$ and where the coefficient in front of $D^{l'}w$  is proportional to a derivative of order $k-|l'|+1$ of $V$ in the space variable. By Proposition \ref{prop.reguregu} and \eqref{condGamma} we get therefore 
$$
\begin{array}{rl}
\ds \sup_{t\in [0,T]} \|D^lw(t)\|_{1+\alpha} \; \leq & \ds C \left[ \|D^lw^1\|_{1+\alpha}+\|g_l\|_\infty\right]\\
\leq & \ds C \left[ \|D^lw^1\|_{1+\alpha}+ \sum_{1\leq |l'|\leq k} K^{3(k-|l'|+1)}_{N,\alpha} \|D^{l'}w\|_\infty\right].
\end{array}
$$
By induction, this implies that, for $k\in \{1, \dots,4\}$,  
\be\label{estiw}
\sup_{t\in [0,T]} \|w(t)\|_{k+\alpha}\leq C K^{3(k-1)}_{N,\alpha}\|w^1\|_{k+\alpha}.
\ee

{\it Step 3: Estimate of $\rho$ by duality.} Let us fix $t_1\in (t_0,T]$, $w^1\in C^\infty$ and let $w$ be the solution to \eqref{eq.wdual}. As $\rho$ solves \eqref{eq:prem}, we have, for $k\in\{1,\dots ,4\}$,  
$$
\begin{array}{l}
\ds \inte w^1\rho(t_1)\; = \; \ds \inte w(t_0)\rho_0 -\int_{t_0}^{t_1}\inte \left(m^N \Gamma Dz+c\right)\cdot Dw \\
 \qquad \leq  \ds \|\rho_0\|_{-(k+\alpha)}\|w(t_0)\|_{k+\alpha}+ \bigl(\int_{t_0}^T \inte m^N\Gamma Dz\cdot Dz\bigr)^{\frac12}\bigl( \int_{t_0}^{t_1} \inte m^N\Gamma Dw\cdot Dw\bigr)^{\frac12}\\
\ds \qquad  \qquad \qquad \qquad \qquad \qquad \qquad + C\sup_t \|c(t)\|_{-(k-1+\alpha)}\sup_t \|Dw\|_{k-1+\alpha}\\
 \qquad \leq  \ds  \|\rho_0\|_{-(k+\alpha)}\|w(t_0)\|_{k+\alpha}+  C\|Dw\|_\infty \bigl(\int_{t_0}^T \inte m^N\Gamma Dz\cdot Dz\bigr)^{\frac12}\\
\ds \qquad  \qquad \qquad \qquad \qquad \qquad \qquad + C\sup_t \|c(t)\|_{-(k-1+\alpha)}\sup_t \|Dw\|_{k-1+\alpha},
\end{array}
$$
where we used the fact that $\Gamma$ is bounded and $\inte m^N(t)=1$ in the last inequality. 
Recalling \eqref{estiw}, we get:
$$
\begin{array}{rl}
\ds \inte w^1\rho(t_1)\;   \leq  &  \ds C\|w^1\|_{k+\alpha}\left\{ K_{N,\alpha}^{3(k-1)}\bigl(\|\rho_0\|_{-(k+\alpha)} + \sup_t \|c(t)\|_{-(k-1+\alpha)}\bigr)+ \bigl(\int_{t_0}^T \inte m^N\Gamma Dz\cdot Dz\bigr)^{\frac12}\right\}.
\end{array}
$$ 
Taking the supremum with respect to $t_1$  and to $w^1$ with $\|w^1\|_{k+\alpha}\leq 1$, we obtain therefore
$$
\sup_t \|\rho(t)\|_{-(k+\alpha)} \leq CK_{N,\alpha}^{3(k-1)}\left( \|\rho_0\|_{-(k+\alpha)} + \sup_t \|c(t)\|_{-(k-1+\alpha)}\right)+ C\left(\int_{t_0}^T \inte m^N\Gamma Dz\cdot Dz\right)^{\frac12}.
$$
For $r\geq 1$, we plug \eqref{eq:energyineqLS} into the above estimate:
$$
\begin{array}{rl}
\ds \sup_t \|\rho(t)\|_{-(k+\alpha)} \; \leq & \ds C  K_{N,\alpha}^{3(k-1)}\left(\|\rho_0\|_{-(k+\alpha)} + \sup_t \|c(t)\|_{-(k-1+\alpha)}\right)+
C\|z(t_0)\|_{r}^{\frac12}\|\rho_0\|_{-r}^{\frac12} \\
& \ds \; +C\left(\sup_t \|b(t)\|_{k+\alpha}^\frac12 \sup_t\|\rho(t)\|_{-(k+\alpha)}^\frac12 +
\sup_t\|c(t)\|_{-(r-1)}^\frac12\sup_t\|z(t)\|_{r}^\frac12\right).
\end{array}
$$
Rearranging we find: 
$$
\sup_{t\in [t_0,T]} \|\rho(t)\|_{-(k+\alpha)} \; \leq C  K_{N,\alpha}^{3(k-1)}M_k+C\sup_{t\in [t_0,T]} \|z(t)\|_{r}^{\frac12}\left(\|\rho_0\|_{-r}^{\frac12}+\sup_{t\in [t_0,T]}\|c(t)\|_{-(r-1)}^\frac12\right),
$$
where $M_k$ is defined in the Proposition. \\

\noindent {\it Step 4: Estimate of $z$.} Fix $l\in \N^d$ with $k:=|l|\in \{0, \cdots, 4\}$. In view of the equation satisfied by $z$, the map $\hat z:= D^l z$ solves an equation of the form 
$$
\left\{ \begin{array}{l}
- \partial_t \hat z - \Delta \hat z + V\cdot D\hat z  = D^l\frac{\delta F^N}{\delta m}(x, m^N(t))(\rho(t))+ D^lb
+g_l \qquad {\rm in}\; [t_0,T]\times \T^d,\\
 \ds \hat z(T,x)=0\qquad  {\rm in}\;  \T^d,
\end{array}\right.
$$
where $g_l$ is as in step 2 with $z$ replacing $w$. Proposition \ref{prop.reguregu} thus implies that 
$$
\sup_{t\in [0,T]} \|D^l z(t)\|_{1+\alpha} \leq 
C\left[  \|D^l\frac{\delta F^N}{\delta m}(\cdot, m^N(\cdot))(\rho(\cdot))\|_\infty+ \|D^lb\|_\infty+ 
\sum_{1\leq |l'|\leq k} K^{3(k-|l'|+1)}_{N,\alpha} \|D^{l'}z\|_\infty\right],
$$
where, from assumption \eqref{defKep1} and Step 3 for $r=k+1+\alpha$: 
$$
\begin{array}{l}
\ds \left\|D^l\frac{\delta F^N}{\delta m}(\cdot, m^N(\cdot))(\rho(\cdot))\right\|_\infty \;  \leq \; \ds 
\sup_m\left\|\frac{\delta F^N}{\delta m}(\cdot, m,\cdot)\right\|_{k+\alpha,k+1+\alpha}\sup_t  \|\rho(t)\|_{-(k+1+\alpha)} \\
\qquad \qquad \leq \; \ds C  K_{N,\alpha}^{3k+1}M_{k+1}+CK_{N,\alpha}\sup_t \|z(t)\|_{k+1+\alpha}^{\frac12}\left(\|\rho_0\|_{-(k+1+\alpha)}^{\frac12}+\sup_t\|c(t)\|_{-(k+\alpha)}^\frac12\right).
\end{array}
$$
So we find
$$
\begin{array}{rl}
\ds \sup_{t\in [0,T]} \|z(t)\|_{k+1+\alpha} \; \leq  & \ds
C\left[  K_{N,\alpha}^{3k+1}M_{k+1}+K_{N,\alpha}\sup_t \|z(t)\|_{k+1+\alpha}^{\frac12}\left(\|\rho_0\|_{-(k+1+\alpha)}^{\frac12}+\sup_t\|c(t)\|_{-(k+\alpha)}^\frac12\right) \right.\\
&\ds  \left.\qquad + \|b\|_k+ \sum_{1\leq n \leq k} K^{3(k-n+1)}_{N,\alpha}\sup_{t\in [0,T]} \|z(t)\|_{n+\alpha}\right].
\end{array}
$$
We rearrange the expression to obtain
$$
\begin{array}{rl}
\ds \sup_{t\in [0,T]} \|z(t)\|_{k+1+\alpha} \; \leq  & \ds
C\left[  K_{N,\alpha}^{3k+1}M_{k+1}+K_{N,\alpha}^2\left(\|\rho_0\|_{-(k+1+\alpha)}+\sup_t\|c(t)\|_{-(k+\alpha)}\right) \right.\\
&\ds  \left.\qquad + \|b\|_k+ \sum_{1\leq n \leq k} K^{3(k-n+1)}_{N,\alpha} \sup_{t\in [0,T]} \|z(t)\|_{n+\alpha}\right]\\
\leq & \ds C\left[  K_{N,\alpha}^{3k+1}M_{k+1}+ \sum_{1\leq n \leq k} K^{3(k-n+1)}_{N,\alpha} \sup_{t\in [0,T]} \|z(t)\|_{n+\alpha}\right].
\end{array}
$$
By induction we infer that, for $k\in \{1, \dots, 4\}$,  
$$
\sup_{t\in [0,T]} \|z(t)\|_{k+\alpha} \; \leq \; C K_{N,\alpha}^{3k-2}M_{k}.
$$
Plugging this inequality into our estimate for $\rho$ (in step 3) gives: 
$$
\sup_{t\in [t_0,T]} \|\rho(t)\|_{-(k+\alpha)} \; \leq C  K_{N,\alpha}^{3(k-1)}M_k.
$$
\end{proof}

\subsection{Estimates for $\frac{\delta U^N}{\delta m}$}
\label{subse:partie:1:differentiability}

In this section we provide estimates for $\frac{\delta U^N}{\delta m}$ where $U^N$ is  the solution of the master equation \eqref{MasterEq}. Following the construction of \cite{CDLL}, we can express this derivative in terms of a linearized system. Let us fix $(t_0,m_0)\in [0,T]\times \Pk$ and let $(m^N,u^N)$ be the solution to the MFG system \eqref{e.MFG.t0m0} with initial condition $m(t_0)=m_0$. Recall that, by  definition, $U^N(t_0,x,m_0)=u^N(t_0,x)$. 

For any $\mu_0\in C^\infty(\T^d)$, we consider the solution $(z,\rho)$ to the linearized system
\be\label{eq:vmubis}
\left\{ \begin{array}{l}
\ds - \partial_t z - \Delta z + D_pH(x,Du^N)\cdot Dz =\frac{\delta F^N}{\delta m}(x,m^N(t))(\rho(t)) \; {\rm in}\; (0,T)\times \T^d,\\
\ds \partial_t \rho - \Delta \rho -{\rm div}( \rho  D_pH(x, D u^N))-{\rm div}( m^N  D^2_{pp}H(x, D u^N)Dz)=0  \; {\rm in}\; (0,T)\times \T^d,\\
\ds z(T,\cdot)=0, \; \rho(t_0,\cdot)=\rho_0  \; {\rm in}\; \T^d.
\end{array}\right.
\ee
We proved in \cite{CDLL} the identity
$$
z(t_0,x)= \inte \frac{\delta U^N}{\delta m}(t_0,x,m_0,y) \rho_0(y)dy.
$$
In order to estimate $\frac{\delta U^N}{\delta m}$, which just need to estimate $z$: this is the aim of the next statement.

\begin{Proposition}\label{prop:estiDmU} The unique solution  $(z,\rho)$ of \eqref{eq:vmubis} satisfies, for $k\in\{1,\dots,4\}$ and any $\alpha\in (0,1)$, 
\be\label{eq:estizk}
\ds \sup_{t\in [t_0,T]}  \|z(t,\cdot)\|_{k+\alpha}\;  \leq \; \ds  CK_{N,\alpha}^{3k-2}\|\rho_0\|_{-(k+\alpha)}, 
\ee
\be\label{eq:estirhok}
\ds \sup_{t\in [t_0,T]}  \|\rho(t)\|_{-(k+\alpha)}\;  \leq \; \ds  CK_{N,\alpha}^{3(k-1)}\|\rho_0\|_{-(k+\alpha)}, 
\ee
where the  constant $C$ does not depend on $(t_0,m_0)$ nor on $N$. 
\end{Proposition}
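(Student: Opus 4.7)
The plan is to observe that the linearized system \eqref{eq:vmubis} is a particular instance of the abstract linearized system \eqref{eq:prem} analyzed in Proposition \ref{prop:estiLS}, and to invoke that proposition directly. Concretely, I would set $V(t,x) := D_pH(x,Du^N(t,x))$, $\Gamma(t,x) := D^2_{pp}H(x,Du^N(t,x))$, together with $b \equiv 0$ and $c \equiv 0$. With this identification, the two equations of \eqref{eq:vmubis} coincide exactly with lines (i) and (ii) of \eqref{eq:prem}, and the initial/terminal data match as well.

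The only real work is then to verify that the structural hypothesis \eqref{condGamma} holds for this choice of $(V,\Gamma)$, with a constant $\bar C$ independent of $N$. The upper bound $0 \leq \Gamma(t,x) \leq \bar C I_d$ follows from the convexity of $H$ in $p$ (assumption \eqref{Hunifell}) together with the fact that $D_pH$ is globally Lipschitz continuous, so that $D^2_{pp}H$ is bounded, uniformly in $(x,p)$ and thus in particular along $(x, Du^N(t,x))$. For the derivative bounds $\|D^k V\|_\infty \leq \bar C K_{N,\alpha}^{3k}$ ($k=0,\dots,3$), I would expand $D^k V$ by the Fa\`a di Bruno / chain rule: each such derivative is a polynomial in the space derivatives of $u^N$ of order up to $k+1$, with coefficients given by bounded derivatives of $H$. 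Proposition \ref{prop:reguum} provides $\sup_t \|u^N(t)\|_{4+\alpha} \leq C K_{N,\alpha}^{3}$, which in particular bounds $\|D^{k+1} u^N\|_\infty$ by $C K_{N,\alpha}^{3}$ for $k \leq 3$, hence $\|D^k V\|_\infty \leq C(1 + K_{N,\alpha}^{3})^{k} \leq \bar C K_{N,\alpha}^{3k}$ (the loose exponent $3k$ allowed in \eqref{condGamma} leaves ample room). This confirms \eqref{condGamma}.

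With \eqref{condGamma} in hand and $b = c = 0$, the quantity $M_k$ appearing in Proposition \ref{prop:estiLS} reduces to $M_k = \|\rho_0\|_{-(k+\alpha)}$. Applying Proposition \ref{prop:estiLS} directly yields
\[
\sup_{t\in [t_0,T]} \|z(t)\|_{k+\alpha} \leq C K_{N,\alpha}^{3k-2}\|\rho_0\|_{-(k+\alpha)}, \qquad \sup_{t\in [t_0,T]} \|\rho(t)\|_{-(k+\alpha)} \leq C K_{N,\alpha}^{3(k-1)} \|\rho_0\|_{-(k+\alpha)},
\]
for $k \in \{1,\dots,4\}$ and $\alpha \in (0,1)$, which are exactly the bounds \eqref{eq:estizk} and \eqref{eq:estirhok}. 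Because the constants in \eqref{condGamma} depend only on $H$ and on the estimate of Proposition \ref{prop:reguum} (which is uniform in $(t_0,m_0)$), the resulting constant $C$ does not depend on $(t_0,m_0)$ nor on $N$, as required.

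There is essentially no obstacle here: the bulk of the work has already been done in Proposition \ref{prop:estiLS}, and the argument reduces to a verification that the coefficients generated by the MFG system \eqref{e.MFG.t0m0} fit the abstract framework. The only subtlety worth being explicit about is that the estimate on $\|D^k V\|_\infty$ uses the regularity bound of Proposition \ref{prop:reguum} on $u^N$, and it is important to check (as the paragraph following \eqref{condGamma} asserts) that this bound is sharp enough to absorb the $K_{N,\alpha}^{3k}$ growth permitted by hypothesis \eqref{condGamma}.
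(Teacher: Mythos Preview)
Your proposal is correct and follows exactly the same route as the paper's proof, which consists of a single sentence: ``It is a straightforward application of Proposition \ref{prop:estiLS}, with $V(t,x)= D_pH(x,Du^N(t,x))$, $\Gamma(t,x)=D^2_{pp}H(x,Du^N(t,x))$ and $b=c=0$.'' Your additional verification of hypothesis \eqref{condGamma} via Proposition \ref{prop:reguum} simply makes explicit what the paper asserts in the paragraph following \eqref{condGamma}.
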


\begin{proof} It is a straightforward application of Proposition \ref{prop:estiLS}, with $V(t,x)= D_pH(x,Du^N(t,x))$, $\Gamma(t,x)=D^2_{pp}H(x,Du^N(t,x))$ and $b=c=0$.  
\end{proof}

As in \cite{CDLL} we can derive from the Proposition an estimate on the first order derivative of $U$ with respect to the measure: 

\begin{Corollary}\label{cor:estiDmU} For any $(t_0,m_0)\in [0,T]\times \Pk$ and $k\in\{1,\dots,4\}$, we have: 
$$
 \left\| \frac{\delta U^N}{\delta m}(t_0,\cdot,m_0,\cdot)\right\|_{(k+\alpha, k+\alpha)} +
 \left\| D_m U^N(t_0,\cdot,m_0,\cdot)\right\|_{(k+\alpha, k-1+\alpha)}  \leq  CK_{N,\alpha}^{3k-2}.
 $$
\end{Corollary}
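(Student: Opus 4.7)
The plan is a direct duality argument using the representation formula
\[
z(t_0,x) \;=\; \int_{\T^d} \frac{\delta U^N}{\delta m}(t_0,x,m_0,y)\,\rho_0(y)\,dy
\]
established in \cite{CDLL}, combined with the estimate \eqref{eq:estizk} of Proposition \ref{prop:estiDmU}. The point is to read \eqref{eq:estizk} as saying that the linear operator $T_{t_0,x}:\rho_0\mapsto z(t_0,x)$ (viewed for fixed $(t_0,x)$ and $m_0$) extends to a bounded operator from $C^{-(k+\alpha)}(\T^d)$ to $\R$, and to use this to recover Hölder bounds on its kernel $y\mapsto\frac{\delta U^N}{\delta m}(t_0,x,m_0,y)$.

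First I would observe that, because \eqref{eq:vmubis} is linear and well-posed in the classes described before Proposition \ref{prop:estiLS}, the map $\rho_0\mapsto z(t_0,\cdot)$ has a unique bounded extension from $C^{-(k+\alpha)}(\T^d)$ to $C^{k+\alpha}(\T^d)$ with operator norm at most $CK_{N,\alpha}^{3k-2}$. Then, for any multi-index $l\in\N^d$ with $|l|\le k$, I would apply \eqref{eq:estizk} to the smooth test density $\rho_0$ and differentiate in $x$ under the integral sign to get
\[
\left| D^l_x \int_{\T^d} \frac{\delta U^N}{\delta m}(t_0,x,m_0,y)\,\rho_0(y)\,dy \right|
\;\le\; \|z(t_0,\cdot)\|_{k+\alpha}\;\le\; CK_{N,\alpha}^{3k-2}\|\rho_0\|_{-(k+\alpha)},
\]
and likewise for the Hölder quotient in $x$ of these $x$-derivatives. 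Taking the supremum over $\rho_0$ with $\|\rho_0\|_{-(k+\alpha)}\le 1$, and using the normalization convention \eqref{ConvCondDeriv} to identify the dual pairing, this gives, for each $x$,
\[
\left\| D^l_x \frac{\delta U^N}{\delta m}(t_0,x,m_0,\cdot)\right\|_{k+\alpha} \;\le\; C K_{N,\alpha}^{3k-2},
\]
and analogously a uniform $\alpha$-Hölder bound in $x$ for $D^l_x\frac{\delta U^N}{\delta m}(t_0,\cdot,m_0,\cdot)$ with values in $C^{k+\alpha}$. Summing over $|l|\le k$ yields the desired bound on $\|\frac{\delta U^N}{\delta m}(t_0,\cdot,m_0,\cdot)\|_{(k+\alpha,k+\alpha)}$.

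The bound on $D_m U^N$ follows immediately by definition: since $D_m U^N(t_0,x,m_0,y) = D_y \frac{\delta U^N}{\delta m}(t_0,x,m_0,y)$, taking one $y$-derivative of the previous bound costs exactly one order of regularity in $y$, yielding $\|D_m U^N(t_0,\cdot,m_0,\cdot)\|_{(k+\alpha,k-1+\alpha)}\le CK_{N,\alpha}^{3k-2}$.

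There is really no substantial obstacle here, since the hard analytic work has been done in Proposition \ref{prop:estiLS} (and hence Proposition \ref{prop:estiDmU}); the only mild subtlety is to justify extending the representation formula from smooth $\rho_0$ to the distributional class $C^{-(k+\alpha)}$, which is handled by approximation as in \cite{CDLL} and explicitly noted at the start of the proof of Proposition \ref{prop:estiLS}. Once that is in place, the corollary is a clean duality transcription of \eqref{eq:estizk}.
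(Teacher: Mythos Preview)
Your proposal is correct and follows exactly the approach the paper indicates (the paper gives no proof and simply writes ``As in \cite{CDLL} we can derive from the Proposition\ldots''); the representation formula $z(t_0,x)=\int \frac{\delta U^N}{\delta m}(t_0,x,m_0,y)\rho_0(y)\,dy$ together with the bound \eqref{eq:estizk} and duality (testing, by approximation, against $D^l\delta_y$ and differences $D^l\delta_y-D^l\delta_{y'}$, which lie in the unit ball of $C^{-(k+\alpha)}$) is precisely how the $C^{k+\alpha}$ regularity in $y$ is recovered. The only remark is that your invocation of the normalization convention \eqref{ConvCondDeriv} is unnecessary here: the kernel is already uniquely determined by the representation formula since $\rho_0$ ranges over all of $C^{-(k+\alpha)}$, not just mean-zero distributions.
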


\subsection{Estimate for $\frac{\delta^2 U^N}{\delta m^2}$}

We now estimate the second order derivative with respect to $m$ of the solution $U^N$ to the master equation \eqref{MasterEq}. 
Let us fix $(t_0,m_0)\in [0,T]\times \Pk$ and let $(m^N,u^N)$ be the solution to the MFG system \eqref{e.MFG.t0m0} with initial condition $m(t_0)=m_0$. Let $(z,\rho)$ be a solution of the linearized system \eqref{eq:vmubis} with initial condition $\rho_0$. The second order linearized system  reads 
\be\label{eq:LSordre2}
\left\{ \begin{array}{l}
\ds - \partial_t w - \Delta w + D_pH(x,Du^N)\cdot Dw =\frac{\delta F^N}{\delta m}(x,m^N(t))(\mu(t))\\
\ds \qquad  + \frac{\delta^2 F^N}{\delta m^2}(x,m^N(t))(\rho(t),\rho(t))- D^2_{pp}H(x,Du^N)Dz\cdot Dz\; {\rm in}\; (0,T)\times \T^d,\\
\;\\
\ds \partial_t \mu - \Delta \mu -{\rm div}( \mu  D_pH(x, D u^N))-{\rm div}( m^N  D^2_{pp}H(x, D u^N)Dw)\\
\ds \qquad   =\dive\left(m^N D^3_{ppp}H(x,Du^N)DzDz\right)+2\dive \left(\rho D^2_{pp}H(x,Du^N)Dz\right) \; {\rm in}\; (0,T)\times \T^d,\\
\ds w(T,\cdot)=0, \; \mu(t_0,\cdot)=0\; {\rm in}\;  \T^d.
\end{array}\right.
\ee
Following \cite{CDLL}, we have 
$$
w(t_0,x)= \inte\inte \frac{\delta^2 U^N}{\delta m^2}(t_0,x,m_0,y,y')\rho_0(y)\rho_0(y')dydy'.
$$

 \begin{Proposition}\label{prop:estiLS2} We have, for $k=2,3$, 
$$
\sup_{t\in [t_0,T]} \|w(t)\|_{k+\alpha}\leq C K_{N,\alpha}^{12k}\|\rho_0\|_{-(k-1+\alpha)}^2.
$$
\end{Proposition}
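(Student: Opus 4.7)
The plan is to view the second-order linearized system \eqref{eq:LSordre2} as an instance of the generic system \eqref{eq:prem} and apply Proposition \ref{prop:estiLS}. One sets $V(t,x)=D_pH(x,Du^N(t,x))$, $\Gamma(t,x)=D^2_{pp}H(x,Du^N(t,x))$, takes zero as the initial condition for $\mu$, and reads off
\[
b=\frac{\delta^2 F^N}{\delta m^2}(x,m^N)(\rho,\rho)-D^2_{pp}H(x,Du^N)Dz\cdot Dz,
\]
\[
c=-m^N D^3_{ppp}H(x,Du^N)DzDz-2\,\rho\, D^2_{pp}H(x,Du^N)Dz.
\]
Proposition \ref{prop:reguum} yields $\|u^N(t)\|_{4+\alpha}\leq CK_{N,\alpha}^3$, so condition \eqref{condGamma} holds, and Proposition \ref{prop:estiLS} reduces the task to proving
\[
M_k:=\sup_t\|b(t)\|_{k+\alpha}+\sup_t\|c(t)\|_{-(k-1+\alpha)}\leq CK_{N,\alpha}^{9k+2}\|\rho_0\|_{-(k-1+\alpha)}^2,
\]
after which multiplication by the factor $K_{N,\alpha}^{3k-2}$ from Proposition \ref{prop:estiLS} delivers the announced $K_{N,\alpha}^{12k}$.

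I would obtain the bound on $M_k$ from three inputs. The first is a Fa\`a di Bruno estimate: since $D^2_{pp}H$ and $D^3_{ppp}H$ are smooth with bounded derivatives and $\|u^N\|_{j+1+\alpha}\leq CK_{N,\alpha}^3$ for $j\leq 3$, one has $\|D^2_{pp}H(\cdot,Du^N)\|_{j+\alpha}+\|D^3_{ppp}H(\cdot,Du^N)\|_{j+\alpha}\leq CK_{N,\alpha}^{3j}$ for $j\leq k$. The second is Proposition \ref{prop:estiDmU} applied at indices $j\in\{k-1,k,k+1\}\subset\{1,\dots,4\}$, producing
\[
\|z(t)\|_{j+\alpha}\leq CK_{N,\alpha}^{3j-2}\|\rho_0\|_{-(j+\alpha)},\qquad \|\rho(t)\|_{-(j+\alpha)}\leq CK_{N,\alpha}^{3(j-1)}\|\rho_0\|_{-(j+\alpha)}.
\]
The monotonicity $\|\rho_0\|_{-(j+\alpha)}\leq\|\rho_0\|_{-(k-1+\alpha)}$ for $j\geq k-1$ then consolidates the dependence on $\rho_0$ into a single norm $\|\rho_0\|_{-(k-1+\alpha)}$; in particular $\|Dz\|_{k+\alpha}\leq CK_{N,\alpha}^{3k+1}\|\rho_0\|_{-(k-1+\alpha)}$ and $\|\rho\|_{-(k-1+\alpha)}\leq CK_{N,\alpha}^{3(k-2)}\|\rho_0\|_{-(k-1+\alpha)}$. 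The third is assumption \eqref{defKep1}, which controls $\tfrac{\delta^2 F^N}{\delta m^2}$ by $K_{N,\alpha}$ in the relevant mixed H\"older norm.

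Feeding these into the H\"older product rule together with the dualities $\|\rho f\|_{-(k-1+\alpha)}\leq\|\rho\|_{-(k-1+\alpha)}\|f\|_{k-1+\alpha}$ and $\|m^N f\|_{-(k-1+\alpha)}\leq\|f\|_\infty$ (the latter because $m^N(t)$ is a probability measure), the dominant contribution to $M_k$ comes from
\[
\|D^2_{pp}H(\cdot,Du^N)Dz\cdot Dz\|_{k+\alpha}\leq CK_{N,\alpha}^{3k}\bigl(CK_{N,\alpha}^{3k+1}\bigr)^2\|\rho_0\|_{-(k-1+\alpha)}^2=CK_{N,\alpha}^{9k+2}\|\rho_0\|_{-(k-1+\alpha)}^2,
\]
while the remaining terms in $b$ and $c$ turn out to be of strictly lower order in $K_{N,\alpha}$. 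The main obstacle is precisely this power-counting: the cubic composition exponent $3k$ from $D^2_{pp}H(\cdot,Du^N)$, the squared factor from $Dz\cdot Dz$, and the gain $K_{N,\alpha}^{3k-2}$ from Proposition \ref{prop:estiLS} must line up exactly to give $12k$. Any shortcut---for instance, bounding $\|Dz\|_\infty$ by $\|z\|_{1+\alpha}$ (which reintroduces the forbidden $\|\rho_0\|_{-(1+\alpha)}$ when $k=3$) or applying Proposition \ref{prop:estiDmU} at an index outside $\{k-1,k,k+1\}$---would break the monotonicity trick and inflate the exponent.
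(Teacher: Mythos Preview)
Your proposal is correct and follows essentially the same route as the paper: apply Proposition~\ref{prop:estiLS} to $(w,\mu)$ with the indicated $b$ and $c$, estimate $\|b\|_{k+\alpha}$ and $\|c\|_{-(k-1+\alpha)}$ via Proposition~\ref{prop:reguum}, Proposition~\ref{prop:estiDmU} at indices $k-1,k,k+1$, and assumption~\eqref{defKep1}, and identify $\|D^2_{pp}H(\cdot,Du^N)Dz\cdot Dz\|_{k+\alpha}\leq CK_{N,\alpha}^{9k+2}\|\rho_0\|_{-(k-1+\alpha)}^2$ as the dominant contribution. The sign you wrote for $c$ is opposite to the paper's (compare the placement of the divergence in \eqref{eq:prem} with the right-hand side of \eqref{eq:LSordre2}), but this is irrelevant for the norm estimates.
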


As a consequence, we have: 
\begin{Corollary}\label{cor:delta2U} For any $(t_0,m_0)\in [0,T]\times \Pk$ and $k=2,3$, we have: 
$$
\left\| \frac{\delta^2 U^N}{\delta m^2}(t_0,\cdot,m_0,\cdot,\cdot)\right\|_{k+\alpha,k-1+\alpha,k-1+\alpha}
+\left\| D^2_{mm} U^N(t_0,\cdot,m_0,\cdot,\cdot)\right\|_{k+\alpha,k-2+ \alpha,k-2+\alpha}
\leq C K_{N,\alpha}^{12k}.
$$
\end{Corollary}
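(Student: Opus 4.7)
The plan is to deduce the claimed bounds on $\frac{\delta^2 U^N}{\delta m^2}$ from the estimate on $w(t_0,\cdot)$ provided by Proposition \ref{prop:estiLS2}, via the representation formula
$$
w(t_0,x)= \inte\inte \frac{\delta^2 U^N}{\delta m^2}(t_0,x,m_0,y,y')\rho_0(y)\rho_0(y')\,dy\,dy',
$$
together with a polarization argument. This parallels the way Corollary \ref{cor:estiDmU} is obtained from Proposition \ref{prop:estiDmU}.

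The system \eqref{eq:LSordre2} is linear in $(w,\mu)$ but quadratic in $(z,\rho)$, which themselves depend linearly on $\rho_0$. Hence $\rho_0 \mapsto w(t_0,\cdot)$ is a symmetric quadratic form in $\rho_0$. For smooth initial data $\rho_{0,1},\rho_{0,2}$, let $(z_i,\rho_i)$ solve \eqref{eq:vmubis} with data $\rho_{0,i}$, and let $(w_{12},\mu_{12})$ be the solution of the polarized second-order system obtained from \eqref{eq:LSordre2} by replacing $\frac{\delta^2 F^N}{\delta m^2}(x,m^N)(\rho,\rho)$ and $D^2_{pp}H\,Dz\cdot Dz$ by the symmetric bilinear expressions $\frac{\delta^2 F^N}{\delta m^2}(x,m^N)(\rho_1,\rho_2)$ and $D^2_{pp}H\,Dz_1\cdot Dz_2$ (with the analogous symmetrisations in the source term of the equation for $\mu$). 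Then
$$
w_{12}(t_0,x) = \inte\inte \frac{\delta^2 U^N}{\delta m^2}(t_0,x,m_0,y,y')\rho_{0,1}(y)\rho_{0,2}(y')\,dy\,dy'.
$$
The proof of Proposition \ref{prop:estiLS2} depends linearly on the forcing terms of the second-order system, and Proposition \ref{prop:estiDmU} controls each $(z_i,\rho_i)$ linearly in $\|\rho_{0,i}\|_{-(k-1+\alpha)}$. Repeating the bookkeeping for $(w_{12},\mu_{12})$ therefore yields, for $k=2,3$,
$$
\sup_{t\in[t_0,T]} \|w_{12}(t,\cdot)\|_{k+\alpha} \;\leq\; C\,K_{N,\alpha}^{12k}\,\|\rho_{0,1}\|_{-(k-1+\alpha)}\,\|\rho_{0,2}\|_{-(k-1+\alpha)}.
$$

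Since $\frac{\delta^2 U^N}{\delta m^2}(t_0,\cdot,m_0,\cdot,\cdot)$ is smooth by Theorem \ref{thm:master}, the standard duality between $C^{k-1+\alpha}$ and $C^{-(k-1+\alpha)}$ (exactly as used in the passage from Proposition \ref{prop:estiDmU} to Corollary \ref{cor:estiDmU}) converts the above bilinear bound into
$$
\left\| \frac{\delta^2 U^N}{\delta m^2}(t_0,\cdot,m_0,\cdot,\cdot)\right\|_{k+\alpha,k-1+\alpha,k-1+\alpha}\leq C K_{N,\alpha}^{12k},
$$
the $C^{k+\alpha}$ regularity in $x$ being inherited directly from the bound on $w_{12}$ at $t_0$. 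The estimate on $D^2_{mm}U^N= D_yD_{y'}\frac{\delta^2 U^N}{\delta m^2}$ then follows by differentiating once in each of $y,y'$, which costs one order of regularity in each of those two entries. The main technical point is the polarization step: one has to justify that the polarized second-order system is well-posed in the relevant functional setting and that its solution truly represents the bilinear kernel. This is obtained by differentiating twice in $\rho_0$ the solution map $\rho_0\mapsto(z,\rho,w,\mu)$, exactly as in \cite{CDLL}, all nonlinearities in \eqref{eq:vmubis}--\eqref{eq:LSordre2} being smooth in their arguments.
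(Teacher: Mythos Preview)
Your proposal is correct and follows exactly the approach the paper intends. The paper states Corollary~\ref{cor:delta2U} as an immediate consequence of Proposition~\ref{prop:estiLS2} without giving an explicit proof, relying implicitly on the same polarization and duality argument you spell out, in direct parallel to the passage from Proposition~\ref{prop:estiDmU} to Corollary~\ref{cor:estiDmU} (which the paper also defers to \cite{CDLL}).
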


\begin{proof}[Proof of Proposition \ref{prop:estiLS2}.] We apply Proposition \ref{prop:estiLS} to $(w,\mu)$ with initial condition $\mu(t_0)=0$ and
$$
b(t)=  \frac{\delta^2 F^N}{\delta m^2}(x,m^N(t))(\rho(t),\rho(t))- D^2_{pp}H(x,Du^N)Dz\cdot Dz,
$$
$$
c(t)= \left(m^N D^3_{ppp}H(x,Du^N)DzDz\right)+2 \rho D^2_{pp}H(x,Du^N)Dz.
$$
We have, for any $k\in \{1, \dots, 3\}$,  
$$
\begin{array}{rl}
\ds \sup_t \|b(t)\|_{k+\alpha}\; \leq & \ds CK_{N,\alpha}\sup_t \|\rho(t)\|^2_{-(k-1+\alpha)}+ C\sup_t \|D^2_{pp}H(\cdot,Du^N(t))\|_{k+\alpha}\sup_t \|z(t)\|^2_{k+1+\alpha} \\
\leq & \ds C K_{N,\alpha}^{1+6(k-2)} \|\rho_0\|_{-(k-1+\alpha)}^2+ C K_{N,\alpha}^{3k+ 2(3(k+1)-2)}\|\rho_0\|_{-(k+1+\alpha)}^2\\
\leq & \ds C K_{N,\alpha}^{9k+2} \|\rho_0\|_{-(k-1+\alpha)}^2, 
\end{array}
$$
where we used Proposition \ref{prop:reguum}, \eqref{eq:estizk}  and \eqref{eq:estirhok}. Next we estimate $c$ for $k\in \{2,3\}$:
$$
\begin{array}{rl}
\ds \|c(t)\|_{-(k-1+\alpha)}\; \leq  & \ds C \sup_{\|\phi\|_{k-1+\alpha}\leq 1} \left(\inte m^N \phi  D^3_{ppp}H(x,Du^N)DzDz + \inte \phi \rho D^2_{pp}H(x,Du^N)Dz\right) \\
\leq & \ds C\|Dz\|_\infty^2 + C \sup_{\|\phi\|_{k-1+\alpha}\leq 1} \| \phi D^2_{pp}H(x,Du^N)Dz\|_{k-1+\alpha}\sup_t \| \rho(t)\|_{-(k-1+\alpha)}\\
\leq & \ds C K_{N,\alpha}^{3(k-1)-2}\|\rho_0\|_{-(k-1+\alpha)}^2+CK_{N,\alpha}^{3(k-1)+3k-2+3(k-2)}\|\rho_0\|_{-(k+\alpha)}\|\rho_0\|_{-(k-1+\alpha)} \\
\leq & \ds C  K_{N,\alpha}^{9(k-1)-2}\|\rho_0\|_{-(k-1+\alpha)}^2.
\end{array}
$$
Therefore, by Proposition \ref{prop:estiLS}, we obtain,  for $k\in \{2,3\}$:  
$$
\sup_t \|w(t)\|_{k+\alpha}\leq C K_{N,\alpha}^{3k-2}\left[\sup_t \|b(t)\|_{k+\alpha}+\sup_t   \|c(t)\|_{-(k-1+\alpha)}\right]\leq  C K_{N,\alpha}^{12k}\|\rho_0\|_{-(k-1+\alpha)}^2.
$$
\end{proof}

\section{Convergence}\label{sec.convergence}

In this section, we consider,
for an integer $N \geq 2$, a classical solution $(v^{N,i})_{i \in \{1,\dots,N\}}$ of the Nash system:  
\be\label{Nash0}
\left\{ \begin{array}{l}
\ds - \partial_t v^{N,i}(t,\bx) -  \sum_{j} \Delta_{x_j}v^{N,i}(t,\bx)+ H\bigl(x_i,  D_{x_i}v^{N,i}(t,\bx)
\bigr) \\
\ds \qquad  + \sum_{j\neq i}  D_pH \bigl(x_j, D_{x_j}v^{N,j}(t,\bx)\bigr)
\cdot  D_{x_j}v^{N,i}(t,\bx)=
F^N(x_i, m^{N,i}_{\bx})\; {\rm in }\; [0,T]\times (\T^{d})^N,
\\
\ds v^{N,i}(T,\bx)= G(x_i)\qquad {\rm in }\;  (\T^{d})^N,
\end{array}\right.
\ee
where we have set, for $\ds {\bx}=(x_1, \dots, x_N)\in (\T^{d})^N$, $\ds m^{N,i}_{\bx}=\frac{1}{N-1}\sum_{j\neq i} \delta_{x_j}$. 

As $H$ is Lipschitz continuous, system \eqref{Nash0} has a unique classical solution \cite{LSU}. By uniqueness, the $v^{N,i}$ enjoy strong symmetry properties. On the one hand, $v^{N,i}(t,x_1, \dots, x_N)$ is symmetric with respect to the variables $(x_j)_{j\neq i}$. On the other hand, for $j\neq i$, $v^{N,i}(t,\bx)=v^{N,j}(t,\by)$, where $\bx=(x_1, \dots, x_N)$ and $\by$ is obtained from $\bx$ by permuting the $x_i$ and $x_j$ variables. 

Our aim is to quantify the convergence rate of $v^{N,i}$ to the solution $U^N$ of the master equation \eqref{MasterEq} as $N$ tends to $\infty$.

\subsection{Finite dimensional projections of $U^N$}

Let $U^N=U^N(t,x,m)$ be the solution of the second order master equation \eqref{MasterEq}. 
For $N\geq 2$ and $i\in \{1,\dots, N\}$ we set 
$$
u^{N,i}(t,{\bx})= U^N(t,x_i, m^{N,i}_{\bx})\quad {\rm where }\; {\bx}=(x_1, \dots, x_N)\in (\T^{d})^N, \; m^{N,i}_{\bx}=\frac{1}{N-1}\sum_{j\neq i} \delta_{x_j}.
$$
Following \cite{CDLL}, we know that the $u^{N,i}$ are of class $C^2$ with respect to the space variables and $C^1$ with respect to the time variable, with, for $i$, $j$, $k$ distinct: 
\be\label{uNC2}
\begin{array}{c}
\ds \partial_t u^{N,i}(t,{\bx})= \partial_t U^N(t,x_i,m^{N,i}_{\bx}), \qquad D_{x_i} u^{N,i}(t,{\bx})= D_xU^N(t,x_i,m^{N,i}_{\bx}) ,\\
\ds D^2_{x_ix_i} u^{N,i}(t,{\bx})= D^2_{xx} U^N(t,x_i,m^{N,i}_{\bx}),  \qquad
\ds D_{x_j}  u^{N,i}(t,{\bx})= \frac{1}{N-1} D_mU^N(t,x_i,m^{N,i}_{\bx},x_j) ,\\
\ds D^2_{x_jx_k} u^{N,i}(t,{\bx})= \frac{1}{(N-1)^2} D^2_{mm}U^N(t,x_i,m^{N,i}_{\bx},x_j,x_k),\\
\ds D^2_{x_jx_j} u^{N,i}(t,{\bx})= \frac{1}{(N-1)^2} D^2_{mm}U^N(t,x_i,m^{N,i}_{\bx},x_j,x_j) +
 \frac{1}{N-1} D^2_{ym}U^N(t,x_i,m^{N,i}_{\bx},x_j).
\end{array}
\ee

We estimate how far  $(u^{N,i})_{i \in \{1,\dots,N\}}$ is to be a solution to the Nash system \eqref{Nash0}:   

\begin{Proposition}\label{Prop:equNi} The map $(v^{N,i})$ satisfies
\be\label{eq:uNi}
\left\{\begin{array}{l}
\ds  - \partial_t u^{N,i}   - \sum_j \Delta_{x_j} u^{N,i}  +H(x_i,D_{x_i}u^{N,i}) \\
\qquad \ds  +\sum_{j\neq i}  D_{x_j}u^{N,i}(t,{\bx})\cdot  D_pH 
\bigl(x_j,D_{x_j}u^{N,j}(t,{\bx}) \bigr) 
=F^N(x_i,m^{N,i}_{\bx}) +r^{N,i}(t,{\bx}) 
\\
\qquad \qquad \qquad \qquad \qquad  \ds  \qquad 
\hspace{4pt}
\mbox{\rm a.e. in}\; (0,T)\times \T^{Nd},
\vspace{2pt}
\\
u^{N,i}(T,{\bx})= G(x_i) \qquad  {\rm in}\;  \T^{Nd},
\end{array}\right.
 \ee
where $r^{N,i}\in C^0([0,T]\times \T^d)$ with
$$
\|r^{N,i}\|_\infty\leq \frac{C}{N}\left(\|D_mU^N\|_\infty \|D_{m,x}U^N\|_{\infty}+\|D^2_{mm}U^N\|_\infty\right).
$$
\end{Proposition}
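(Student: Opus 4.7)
The plan is to substitute $u^{N,i}(t,\bx)=U^N(t,x_i,m^{N,i}_{\bx})$ into the LHS of the Nash system \eqref{Nash0}, rewrite every space derivative via the identities \eqref{uNC2}, and then match the result against the master equation \eqref{MasterEq} evaluated at $(t,x_i,m^{N,i}_{\bx})$. The discrepancies that cannot be absorbed directly into the master equation will form $r^{N,i}$.

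First, the terms $\partial_t u^{N,i}$, $\Delta_{x_i}u^{N,i}$ and $H(x_i,D_{x_i}u^{N,i})$ coincide verbatim, by \eqref{uNC2}, with $\partial_t U^N$, $\Delta_x U^N$ and $H(x_i,D_xU^N)$ evaluated at $(t,x_i,m^{N,i}_{\bx})$. Next, still using \eqref{uNC2},
\[
\sum_{j\neq i}\Delta_{x_j}u^{N,i}(t,\bx)=\frac{1}{N-1}\sum_{j\neq i}\mathrm{div}_y D_m U^N(t,x_i,m^{N,i}_{\bx},x_j)+\frac{1}{(N-1)^2}\sum_{j\neq i}\mathrm{tr}\, D^2_{mm}U^N(t,x_i,m^{N,i}_{\bx},x_j,x_j).
\]
The first sum is exactly $\int \mathrm{div}_y D_mU^N(t,x_i,m^{N,i}_{\bx},y)\,dm^{N,i}_{\bx}(y)$, i.e.\ the corresponding term of the master equation. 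The second sum contributes a remainder bounded by $(N-1)^{-1}\|D^2_{mm}U^N\|_\infty$, which fits into the desired estimate.

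The delicate piece is the drift term. Using \eqref{uNC2},
\[
\sum_{j\neq i}D_{x_j}u^{N,i}\cdot D_pH\bigl(x_j,D_{x_j}u^{N,j}\bigr)=\frac{1}{N-1}\sum_{j\neq i}D_mU^N(t,x_i,m^{N,i}_{\bx},x_j)\cdot D_pH\bigl(x_j,D_xU^N(t,x_j,m^{N,j}_{\bx})\bigr),
\]
whereas the master equation at $(t,x_i,m^{N,i}_{\bx})$ produces the same sum but with $m^{N,j}_{\bx}$ replaced by $m^{N,i}_{\bx}$ inside $D_xU^N$. Since $m^{N,i}_{\bx}$ and $m^{N,j}_{\bx}$ differ by a single atom swap, $\dk(m^{N,i}_{\bx},m^{N,j}_{\bx})\le 2/(N-1)$; by the discussion following Definition \ref{def:Diff}, $m\mapsto D_xU^N(t,y,m)$ is Lipschitz with constant $\|D_{m,x}U^N\|_\infty$, hence
\[
\bigl|D_xU^N(t,x_j,m^{N,j}_{\bx})-D_xU^N(t,x_j,m^{N,i}_{\bx})\bigr|\le \tfrac{C}{N}\|D_{m,x}U^N\|_\infty.
\]
Combining this with the boundedness of $D^2_{pp}H$ (Lipschitz continuity of $D_pH$) and the pointwise bound $|D_mU^N|\le\|D_mU^N\|_\infty$, each of the $N-1$ terms of the sum is perturbed by at most $C N^{-1}\|D_mU^N\|_\infty\|D_{m,x}U^N\|_\infty$; after division by $N-1$ the total contribution to $r^{N,i}$ is $\tfrac{C}{N}\|D_mU^N\|_\infty\|D_{m,x}U^N\|_\infty$.

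Finally, the coupling $F^N(x_i,m^{N,i}_{\bx})$ on the right-hand side of \eqref{Nash0} is identical to $F^N(x_i,m)$ appearing in \eqref{MasterEq} evaluated at $m=m^{N,i}_{\bx}$, and the terminal condition $u^{N,i}(T,\bx)=U^N(T,x_i,m^{N,i}_{\bx})=G(x_i)$ matches. Collecting the two remainders identified above yields
\[
\|r^{N,i}\|_\infty\le \frac{C}{N}\bigl(\|D_mU^N\|_\infty\|D_{m,x}U^N\|_\infty+\|D^2_{mm}U^N\|_\infty\bigr),
\]
as claimed. The only real subtlety is the passage from $m^{N,j}_{\bx}$ to $m^{N,i}_{\bx}$ inside $D_xU^N$ in the drift; everything else is a direct reading of \eqref{uNC2} against the master equation.
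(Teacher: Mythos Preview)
Your proof is correct and follows essentially the same approach as the paper's: substitute $u^{N,i}(t,\bx)=U^N(t,x_i,m^{N,i}_{\bx})$, use the identities \eqref{uNC2} to rewrite each derivative, and compare with the master equation \eqref{MasterEq} at $(t,x_i,m^{N,i}_{\bx})$; the two remainder terms (from the trace of $D^2_{mm}U^N$ in the Laplacian and from replacing $m^{N,i}_{\bx}$ by $m^{N,j}_{\bx}$ inside $D_xU^N$ in the drift) are identified and bounded exactly as in the paper. The only cosmetic difference is that the paper names these remainders $r^{N,i}_1$ and $r^{N,i}_2$ and bounds them separately before summing.
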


\begin{proof}  
As $U^N$ solves \eqref{MasterEq}, one has at a point $(t,x_i,m^{N,i}_{\bx})$: 
\begin{align*} 
&- \partial_t U^N  - \Delta_x U^N +H(x_i,D_xU^N)  -\inte \dive_y \bigl[D_m U^N\bigr]\bigl(t,x_i,m^{N,i}_{\bx},y \bigr) d m^{N,i}_{\bx}(y) 
\\
&\hspace{15pt}+ \inte  D_m U^N\bigl(t,x_i,m^{N,i}_{\bx},y\bigr)\cdot  
D_pH\bigl(y,D_xU^N(t,y,m^{N,i}_{\bx}) \bigr)
dm^{N,i}_{\bx}(y) =F^N\bigl(x_i,m^{N,i}_{\bx}\bigr). 
\end{align*}
So $u^{N,i}$ satisfies:  
\begin{align*}
&- \partial_t u^{N,i}  - \Delta_{x_i} u^{N,i} +H(x_i,D_{x_i}u^{N,i})  -  \inte \dive_y\bigl[ D_m U^N\bigr]\bigl(t,x_i,m^{N,i}_{\bx},y\bigr) d m^{N,i}_{\bx}(y)
  \\
&\hspace{30pt} +\frac{1}{N-1}\sum_{j\neq i}  D_m U^N\bigl(t,x_i,m^{N,i}_{\bx},x_j\bigr) \cdot  D_pH \bigl(x_j,D_xU^N(t,x_j,m^{N,i}_{\bx})\bigr)
=F^N(x_i,m^{N,i}_{\bx}) .
\end{align*}
 Note that, by \eqref{uNC2}, for any $j\neq i$, we have:
 $$
\frac{1}{N-1} D_m U^N \bigl(t,x_i,m^{N,i}_{\bx},x_j\bigr) = D_{x_j}u^{N,i}(t,\bx).
$$
In particular,  
\be\label{e.boundDxjui}
\| D_{x_j}u^{N,i}\|_\infty\leq \frac{1}{N}\|D_mU^N\|_\infty.
\ee
By the Lipschitz continuity of $D_xU^N$ with respect to $m$, we have 
$$
\left|D_xU^N(t,x_j,m^{N,i}_{\bx})- D_xU^N(t,x_j,m^{N,j}_{\bx})\right| \leq \|D_{m,x}U^N\|_\infty \dk(m^{N,i}_{\bx}, m^{N,j}_{\bx})\leq \frac{C}{N}\|D_{m,x}U^N\|_\infty, 
$$
so that,
by Lipschitz continuity of $D_pH$, 
\begin{equation}
\label{eq:quelles:H:utiles}
\bigl| D_pH \bigl(x_j,D_xU^N(t,x_j,m^{N,i}_{\bx})\bigr)- D_pH 
\bigl(x_j,D_{x_j}u^{N,j}(t,{\bx})\bigr)\bigr|\leq  \frac{C}{N}\|D_{m,x}U^N\|_\infty.
\end{equation}
Therefore
\begin{align*}
&\frac{1}{N-1}\sum_{j\neq i}  D_m U^N\bigl(t,x_i,m^{N,i}_{\bx},x_j\bigr) \cdot  D_pH \bigl(x_j,D_xU^N(t,x_j,m^{N,i}_{\bx})\bigr) 
\\
&\hspace{15pt} =
\sum_{j\neq i}  D_{x_j}u^{N,i}(t,{\bx})\cdot  D_pH \bigl(x_j,D_{x_j}u^{N,j}(t,{\bx})\bigr) +r^{N,i}_1(t,\bx)=0,
\end{align*}
where, by \eqref{e.boundDxjui} and \eqref{eq:quelles:H:utiles},  
$$
\left\|r^{N,i}_1\right\|_\infty\leq \frac{C}{N}  \|D_mU^N\|_\infty \|D_{m,x}U^N\|_\infty.
$$
On the other hand, by \eqref{uNC2}, we have 
$$
\begin{array}{l}
\ds \sum_{j=1}^N \Delta_{x_j} u^{N,i}   - \Delta_xU^N(t,x_i, m^{N,i}_{\bx})- \frac{1}{N-1}\sum_{j\neq i} \dive_y D_mU^N(t,x_i,m^{N,i}_{\bx}, x_j) \\
\ds \qquad = \frac{1}{(N-1)^2} \sum_{j\neq i} {\rm tr}(D^2_{mm}U^N(t,x_i,m^{N,i}_{\bx}, x_j,x_j)) =:r^{N,i}_2(t,\bx),
\end{array}$$
where 
$$
\|r^{N,i}_2\|_\infty \leq \frac{C}{N}\|D^2_{mm}U^N\|_\infty.
$$
Therefore 
\begin{align*}
&- \partial_t u^{N,i}(t,\bx) - \sum_j \Delta_{x_j} u^{N,i}(t,\bx) 
  + H\bigl(x_i,D_{x_i}u^{N,i}(t,\bx) \bigr)    
  \\
&\hspace{15pt} + \sum_{j\neq i}  D_{x_j}u^{N,i}(t,{\bx})\cdot  D_pH 
\bigl(x_j,D_{x_j}u^{N,j}(t,{\bx})\bigr)  =F(x_i,m^{N,i}_{\bx})
+ 
r^{N,i}_1+r^{N,i}_2,
\end{align*}
which shows the result.
\end{proof}

\subsection{Estimates between $v^{N,i}$ and $U^N$}

Let us fix $t_0\in [0,T)$. Let $(Z_i)_{i \in \{1,\dots,N\}}$ be an i.i.d family of $N$ random variables. We set $\bZ=(Z_i)_{i \in \{1,\dots,N\}}$.  Let also $((B_{t}^{i})_{t \in [ 0,T]})_{i \in \{1,\dots,N\}}$ be a family of $N$ independent $d$-dimensional Brownian Motions which is also independent of $(Z_i)_{i \in \{1,\dots,N\}}$. 
We consider the systems of SDEs with variables $(\bX_{t}=(X_{i,t})_{i\in \{1,\dots,N\}})_{t \in [0,T]}$ and $(\bY_{t}=(Y_{i,t})_{i\in \{1,\dots,N\}})_{t \in [0,T]}$: 
\be\label{defxitBIS}
\left\{\begin{array}{l}
dX_{i,t}= -D_pH\bigl(X_{i,t}, D_{x_i}u^{N,i}(t, \bX_{t})\bigr){\rm dt} +\sqrt{2} dB^{i}_t\qquad t\in [t_0,T],\\
X_{i,t_0}= Z_i,
\end{array}\right.
\ee
and 
\be\label{defyitBIS}
\left\{\begin{array}{l}
dY_{i,t}= -D_pH\bigl(Y_{i,t}, D_{x_i}v^{N,i}(t, \bY_t)\bigr){\rm dt} +\sqrt{2} dB^{i}_t\qquad t\in [t_0,T],\\
Y_{i,t_0}= Z_i.
\end{array}\right.
\ee
Note that, by the symmetry properties of the $(u^{N,i})_{i \in \{1,\dots,N\}}$ and of the $(v^{N,i})_{i \in \{1,\dots,N\}}$, 
the processes $(X_{i,t}, Y_{i,t})_{t\in [t_0,T]})_{i \in \{1,\dots,N\}}$ are exchangeable.

Let us finally introduce notations for the error terms: 
\be\label{alphaNbetaN}
\alpha^N=\sup_i \|D_{x_i}u^{N,i}\|_\infty,
\qquad
\beta^N=\sup_i\sup_{j\neq i} \|D_{x_j}u^{N,i}\|_\infty
\qquad {\rm
and} \qquad
r^N:=\sup_{i}\|r^{N,i}\|_\infty,
\ee
where $r^{N,i}$ is the error term in Proposition \ref{Prop:equNi}. In the same way, we set
$$
\hat \alpha^N=\sup_i \|D^2_{x_i,x_i}u^{N,i}\|_\infty,
\qquad
\hat \beta^N=\sup_i\sup_{j\neq i} \|D^2_{x_i,x_j}u^{N,i}\|_\infty.
$$
Finally,  
\be\label{thetaN}
\theta^N:= \left(1+(\alpha^N)^2+(N\beta^N)^2\right), \qquad \hat \theta^N := (1+ \hat \alpha^N+N\hat \beta^N).
\ee
Note that, by symmetry, the $\sup_i$ in the above expressions is actually superfluous. Theorem \ref{them:KeyEsti} implies that $r^N$ and $\theta^N$ are of the following order: 
$$
r^N\leq \frac{CK_{N,\alpha}^{12}}{N}, \qquad \theta^N  \leq CK_{N,\alpha}^{6}, \qquad  \hat \theta^N \leq CK_{N,\alpha}^{3}.
$$

\begin{Theorem}\label{thm:CvyxBIS} We have, for any $i\in \{1, \cdots, N\}$,  
\be\label{uNi-vNi}
\|u^{N,i}-v^{N,i}\|_\infty\leq  Cr^N(\theta^N)^{1/2} \exp(C\theta^N),
\ee
\be\label{estixi-yiBIS}
\E\bigl[\sup_{t\in [t_0,T]} |Y_{i,t}-X_{i,t}|\bigr]\leq Cr^N(\theta^N\hat \theta^N)^{1/2}\exp(C(\theta^N+\hat \theta^N))
\ee
and 
\be\label{CvyxIneq2BIS}
\begin{array}{r}
\ds \E\biggl[\int_{t_0}^T  |D_{x_i}v^{N,i}(t, \bY_t)- D_{x_i}u^{N,i}(t,\bY_t)|^2{\rm dt} \biggr] 
\leq  C(r^N)^2\theta^N \exp(C\theta^N),
\end{array}
\ee
where $C$ is a (deterministic) constant that does not depend on $t_0$, $m_0$ and $N$.
\end{Theorem}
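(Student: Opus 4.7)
The strategy is to apply It\^o's formula to $v^{N,i}(t,\bY_t)-u^{N,i}(t,\bY_t)$ along the trajectory $\bY_t$ defined by \eqref{defyitBIS}, combining the exact Nash equation \eqref{Nash0} for $v^{N,i}$ with the approximate one \eqref{eq:uNi} for $u^{N,i}$. After cancellation of the common $F^N(Y_i,m^{N,i}_{\bY_t})$ and Laplacian terms, the drift of $v^{N,i}-u^{N,i}$ reorganizes into
\begin{equation*}
\bigl[H(Y_i,Dv)-H(Y_i,Du)-(Dv-Du)\cdot D_pH(Y_i,Dv)\bigr]-\sum_{j\neq i}D_{x_j}u^{N,i}\cdot\bigl[D_pH(Y_j,Du^{N,j})-D_pH(Y_j,Dv^{N,j})\bigr]+r^{N,i},
\end{equation*}
where $Du:=D_{x_i}u^{N,i}(t,\bY_t)$ and analogously for $v$. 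The first bracket is nonnegative by convexity of $H$ and is bounded below by $c|Du-Dv|^2$ on bounded sets thanks to the local uniform convexity \eqref{Hunifell}. The crucial observation is that the cross-term only involves the derivatives $D_{x_j}u^{N,i}$ with $j\neq i$, which carry the small bound $\beta^N$: the argument avoids any a priori control on the cross derivatives of $v^{N,i}$.

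To exploit the symmetry, I would take i.i.d.\ initial data $\bZ=(Z_1,\dots,Z_N)$, so that $\bY_t$ stays exchangeable and the processes $\phi_j(t):=D_{x_j}u^{N,j}(t,\bY_t)-D_{x_j}v^{N,j}(t,\bY_t)$ share a common law. Integrating the It\^o identity over $[t_0,T]$, using $(v^{N,i}-u^{N,i})(T,\cdot)=0$, summing over $i$, swapping $i\leftrightarrow j$ in the cross sum, invoking $|D_{x_j}u^{N,i}|\leq \beta^N$, and applying AM--GM via $L(N-1)\beta^N|\phi_j|\leq \tfrac{c}{2}|\phi_j|^2+\tfrac{L^2(N\beta^N)^2}{2c}$ would yield, after dividing by $N$, the lower bound
\begin{equation*}
\E\bigl[(v^{N,1}-u^{N,1})(t_0,\bZ)\bigr]\geq \tfrac c2\,\E\int_{t_0}^T|\phi_1|^2\,dt-C(\theta^N+r^N)T.
\end{equation*}
A matching upper bound comes from a mirror argument; combining the two with a Gronwall-type iteration in the time variable $t_0$, whose natural unknown is $\sup_{s\in[t_0,T]}\|v^{N,i}-u^{N,i}\|_\infty$ and whose coupling constant is $(N\beta^N)^2\leq \theta^N$, produces the exponential factor $\exp(C\theta^N)$ and delivers both \eqref{CvyxIneq2BIS} and \eqref{uNi-vNi}.

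For the trajectory bound \eqref{estixi-yiBIS}, I would subtract the SDEs \eqref{defxitBIS}--\eqref{defyitBIS} and split
\begin{equation*}
D_pH(Y_i,D_{x_i}v^{N,i}(s,\bY_s))-D_pH(X_i,D_{x_i}u^{N,i}(s,\bX_s))
\end{equation*}
into Lipschitz contributions bounded respectively by $|Y_i-X_i|$, by $|D_{x_i}u^{N,i}(s,\bY_s)-D_{x_i}u^{N,i}(s,\bX_s)|$ (itself controlled by $\hat\alpha^N|Y_i-X_i|+\hat\beta^N\sum_{j\neq i}|Y_j-X_j|$ via the second-order bounds on $u^{N,i}$), and by $|\phi_i(s)|$. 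Summing over $i$, using exchangeability, and applying Gronwall in time with growth constant $\hat\theta^N$ bounds $\E|Y_i-X_i|$ by a multiple of $\E\int|\phi_1|\,ds$; Cauchy--Schwarz combined with \eqref{CvyxIneq2BIS} then gives the advertised estimate, the supremum in $s$ being recovered through the Burkholder--Davis--Gundy inequality.

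The main obstacle is the Gronwall step in the second paragraph: producing the factor $\exp(C\theta^N)$ requires carefully handling the asymmetry between the $\bY_t$-argument (where the $\beta^N$-bound on $D_{x_j}u^{N,i}$ for $j\neq i$ is available) and the mirror $\bX_t$-argument (where no analogous a priori bound is known for $D_{x_j}v^{N,i}$). The resolution should come from bootstrapping on a functional combining $\sup_{s\in[t_0,T]}\|u^{N,i}-v^{N,i}\|_\infty$ and the $L^2$ energy of $\phi_1$.
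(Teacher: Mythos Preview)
Your proposal has a genuine gap, which you yourself flag in the final paragraph but do not resolve. The one-sided It\^o computation along $\bY_t$ gives (up to a sign) an inequality of the form
\[
\E\bigl[(u^{N,1}-v^{N,1})(t_0,\bZ)\bigr]+\tfrac{c}{2}\,\E\!\int_{t_0}^T|\phi_1|^2\,ds \;\leq\; Cr^N+C(N\beta^N)^2,
\]
and the additive error is of order $(N\beta^N)^2\leq\theta^N$, not $(r^N)^2$; since the left-hand side need not be nonnegative, this bounds neither term. The ``mirror'' computation along $\bX_t$ indeed fails: the cross term there is $\sum_{j\neq i}D_{x_j}v^{N,i}(t,\bX_t)\cdot[\,\cdots]$, and no $O(1/N)$ bound on $D_{x_j}v^{N,i}$ is available. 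Hence you have no matching inequality with which to set up a Gronwall in $t_0$, and the bootstrapping hint does not close. A secondary problem is that the lower bound ``$\geq c|Du-Dv|^2$'' drawn from \eqref{Hunifell} requires both $Du^{N,i}$ and $Dv^{N,i}$ to lie in a fixed compact set; you control $Du^{N,i}$ by $\alpha^N$, but you have no comparable a~priori bound on $D_{x_i}v^{N,i}$.

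The paper avoids both issues by a different device: one applies It\^o's formula to the \emph{square} $\bigl[u^{N,i}(t,\bY_t)-v^{N,i}(t,\bY_t)\bigr]^2$. The coercive term
\[
2\sum_{j}\bigl|D_{x_j}u^{N,i}(t,\bY_t)-D_{x_j}v^{N,i}(t,\bY_t)\bigr|^2
\]
then comes for free from the quadratic variation of the martingale part, and convexity of $H$ is never used---only the Lipschitz continuity of $H$ and $D_pH$. The drift contributions are bounded by $C(1+\alpha^N)\,|U-V|\,|\phi_i|$, by $C\beta^N\sum_{j\neq i}|U-V|\,|\phi_j|$, and by $r^N|U-V|$; Young's inequality converts these into $C\theta^N|U-V|^2$ plus an absorbable fraction of the coercive term, with source $(r^N)^2$. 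Summing over $i$ and applying Gronwall to $t\mapsto\sum_i\E^{\bZ}|U^{N,i}_t-V^{N,i}_t|^2$ yields \eqref{CvyxIneq2BIS}; working with the \emph{conditional} expectation $\E^{\bZ}$ gives an almost-sure bound in $\bZ$, and choosing the $Z_i$ uniform then upgrades it to the $L^\infty$ bound \eqref{uNi-vNi}. Your outline for \eqref{estixi-yiBIS} is then essentially the paper's, except that Burkholder--Davis--Gundy is unnecessary: the Brownian terms cancel in $Y_{i}-X_{i}$, so the difference is absolutely continuous and a direct Gronwall suffices.
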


\begin{proof} We follow closely the proof in \cite{CDLL} and so indicate only the main changes. 
We will use the following notations: for $t\in [t_0,T]$, 
\begin{equation*} 
\begin{split}
&U^{N,i}_{t} = u^{N,i}(t,\bY_{t}), \quad 
V^{N,i}_{t} = v^{N,i}(t,\bY_{t}), 
\\
&DU^{N,i,j}_{t} = D_{x_{j}} u^{N,i}(t,\bY_{t}), 
\quad
DV^{N,i,j}_{t} = D_{x_{j}} v^{N,i}(t,\bY_{t}).
\end{split}
\end{equation*}
As the $(v^{N,i})_{i \in \{1,\dots,N\}}$ solve equation \eqref{Nash0}, we have by It\^o's formula that, for any $i \in \{1,\dots,N\}$,
\begin{equation}
\label{rep:vNi}
\begin{split}
d V^{N,i}_{t}
&= 
\Bigl[ H \bigl( Y_{i,t},D_{x_i} v^{N,i}(t, \bY_t) \bigr)
- D_{x_{i}} v^{N,i}(t,\bY_{t}) 
\cdot
D_pH\bigl(Y_{i,t}, 
D_{x_i} v^{N,i}(t, \bY_t) \bigr)
\\
&\hspace{150pt}
- F^N \bigl( Y_{i,t},m^{N,i}_{\bY_t})
\Bigr] {\rm dt}
\\
&\hspace{15pt} + 
\sqrt{2}
\sum_{j} D_{x_{j}} v^{N,i}(t,\bY_{t})\cdot 
dB^j_{t}.
\end{split}
\end{equation}
Similarly, as $(u^{N,i})_{i \in \{1,\dots,N\}}$ satisfies \eqref{eq:uNi}, we have:
\begin{equation}
\label{rep:uNi}
\begin{split}
&d U^{N,i}_{t}
\\
&= 
\Bigl[ H \bigl( Y_{i,t},D_{x_i} u^{N,i}(t, \bY_t) \bigr)
- D_{x_{i}} u^{N,i}(t,\bY_{t}) 
\cdot
D_pH\bigl(Y_{i,t}, 
D_{x_i} u^{N,i}(t, \bY_t) \bigr)
\\
&\hspace{150pt}
- F^N \bigl( Y_{i,t},m^{N,i}_{\bY_t})
- r^{N,i}(t,\bY_{t})
\Bigr] {\rm dt}
\\
&\hspace{5pt}
-
\sum_{j }
D_{x_{j}} u^{N,i}(t,\bY_{t}) 
\cdot
\Bigl( 
D_pH \bigl(Y_{j,t}, D_{x_j}v^{N,j}(t, \bY_t) \bigr)
-
D_pH \bigl(Y_{j,t}, D_{x_j}u^{N,j}(t, \bY_t) \bigr)
\Bigr) {\rm dt}
\\
&\hspace{5pt} + 
\sqrt{2}
\sum_{j} D_{x_{j}} u^{N,i}(t,\bY_{t})
\cdot
dB^j_{t}.
\end{split}
\end{equation}
Computing the difference between 
\eqref{rep:vNi}
and \eqref{rep:uNi}, taking the square and applying It\^o's formula
again, we obtain:
\begin{equation}
\label{CvyxIneq2:ito}
\begin{split}
&d 
\bigl[ 
U^{N,i}_{t}
- 
V^{N,i}_{t}
\bigr]^2
\\
&= \biggl[ 2 \bigl( U^{N,i}_{t}
- 
V^{N,i}_{t}
\bigr) 
\cdot \Bigl(  
 H \bigl( Y_{i,t},DU^{N,i,i}_{t} \bigr)
 - H \bigl( Y_{i,t},DV^{N,i,i}_{t} \bigr)
 \Bigr)
\\ 
&\hspace{25pt} 
- 2 \bigl( U^{N,i}_{t}
- 
V^{N,i}_{t}
\bigr) 
\cdot \Bigl(  
DU^{N,i,i}_{t}
\cdot
\bigl[
 D_{p} H \bigl( Y_{i,t},DU^{N,i,i}_{t} \bigr)
 -
D_{p} H \bigl( Y_{i,t},DV^{N,i,i}_{t}
\bigr) \bigr] \Bigr)
\\ 
&\hspace{25pt} 
- 2 \bigl( U^{N,i}_{t}
- 
V^{N,i}_{t}
\bigr) 
\cdot \Bigl(  
\bigl[
DU^{N,i,i}_{t}
-
DV^{N,i,i}_{t}
\bigr]
\cdot
 D_{p} H \bigl( Y_{i,t},DV^{N,i,i}_{t} \bigr)
 \Bigr)
 \\
 &\hspace{25pt}
 - 2
 \bigl( U^{N,i}_{t}
- 
V^{N,i}_{t}
\bigr)
r^{N,i}(t,\bY_{t})
 \biggr] {\rm dt}
\\
&\hspace{10pt} 
- 2 \bigl( U^{N,i}_{t}
- 
V^{N,i}_{t}
\bigr) 
\sum_{j }
D U^{N,i,j}_{t} 
\cdot
\Bigl( 
D_pH \bigl(Y_{j,t}, DV^{N,j,j}_{t} \bigr)
-
D_pH \bigl(Y_{j,t}, DU^{N,j,j}_{t} \bigr)
\Bigr) {\rm dt}
 \\
 &\hspace{15pt}
 + 
 \biggl[ 2
 \sum_{j} \vert 
DU^{N,i,j}_{t} 
-
 DV^{N,i,j}_{t}
 \vert^2 
+
\sqrt{2}
 \bigl( U^{N,i}_{t} - V^{N,i}_{t}
 \bigr) 
\sum_{j} 
\biggl[
\bigl(
DU^{N,i,j}_{t}
- DV^{N,i,j}_{t}
\bigr) 
\cdot
dB^j_{t}
\biggr].
\end{split}
\end{equation}
Recall now that 
$H$ 
and $D_{p} H$
are Lipschitz continuous in the variable $p$. Recall also the notation $\alpha^N$, $\beta^N$ and $r^N$ in \eqref{alphaNbetaN}. Integrating from $t$ to $T$ 
in the above formula and 
taking the conditional expectation 
given $\bZ$ (with the shorten notation 
$\E^{\bZ}[\cdot]
= \E[ \cdot \vert \bZ]$), we deduce: 
\begin{equation}
\label{eq:quelles:H:utiles:2}
\begin{split}
&{\mathbb E}^{\bZ}
\bigl[ 
\vert
U^{N,i}_{t}
- 
V^{N,i}_{t}
\vert^2
\bigr]
+ 
2  \sum_{j} \E^{\bZ}
 \biggl[ \int_{t}^T
 \vert 
DU^{N,i,j}_{s} 
-
 DV^{N,i,j}_{s}
 \vert^2 ds 
 \biggr]
 \\
&\hspace{15pt} \leq {\mathbb E}^{\bZ}
\bigl[ 
\vert
U^{N,i}_{T}
- 
V^{N,i}_{T}
\vert^2
\bigr]
+ r^N
\int_{t}^T 
\E^{\bZ} 
\bigl[
\vert
U^{N,i}_{s}
- 
V^{N,i}_{s}
\vert
\bigr]
ds
\\
&\hspace{30pt}
+ C(1+\alpha^N) 
\int_{t}^T 
\E^{\bZ}
\Bigl[
\vert
U^{N,i}_{s}
- 
V^{N,i}_{s}
\vert
\cdot
\vert 
DU^{N,i,i}_{s} 
-
DV^{N,i,i}_{s}
\vert 
\Bigr]
ds
\\
&\hspace{30pt} + 
C\beta^N
\sum_{j \not = i} 
\int_{t}^T 
\E^{\bZ}
\Bigl[
\vert
U^{N,i}_{s}
- 
V^{N,i}_{s}
\vert
\cdot
\vert 
DU^{N,j,j}_{s} 
-
DV^{N,j,j}_{s}
\vert 
\Bigr]
ds.
\end{split}
\end{equation}
Recall that $U^{N,i}_{T}=V^{N,i}_{T}=G(Y_{i,T})$.
By Young's inequality, we get
\begin{equation}
\label{CvyxIneq2:001-new-0}
\begin{split}
&{\mathbb E}^{\bZ}
\bigl[ 
\vert
U^{N,i}_{t}
- 
V^{N,i}_{t}
\vert^2
\bigr]
+  \E^{\bZ}
 \biggl[ \int_{t}^T
 \vert 
DU^{N,i,i}_{s} 
-
 DV^{N,i,i}_{s}
 \vert^2 ds 
  \biggr]
  \\
 &\leq C(r^N)^2
+
 C\left(1+(\alpha^N)^2+(N\beta^N)^2\right) 
\int_{t}^T 
\E^{\bZ}
\bigl[
\vert
U^{N,i}_{s}
- 
V^{N,i}_{s}
\vert^2
\bigr]
ds\\
& + 
\frac1{2N} \sum_{j}
\E^{\bZ}
 \biggl[ \int_{t}^T
 \vert 
DU^{N,j,j}_{s} 
-
 DV^{N,j,j}_{s}
 \vert^2 ds 
  \biggr].
\end{split}
\end{equation}
Summing over $i$ we obtain: 
\begin{equation}
\label{zvejhgbjkj}
\begin{split}
&\sum_i {\mathbb E}^{\bZ}
\bigl[ 
\vert
U^{N,i}_{t}
- 
V^{N,i}_{t}
\vert^2
\bigr]
+  \frac12 \sum_i \E^{\bZ}
 \biggl[ \int_{t}^T
 \vert 
DU^{N,i,i}_{s} 
-
 DV^{N,i,i}_{s}
 \vert^2 ds 
  \biggr]
  \\
 &\leq CN(r^N)^2
+
 C\left(1+(\alpha^N)^2+(N\beta^N)^2\right) 
\int_{t}^T 
\sum_i \E^{\bZ}
\bigl[
\vert
U^{N,i}_{s}
- 
V^{N,i}_{s}
\vert^2
\bigr]
ds.
\end{split}
\end{equation}
By Gronwall's Lemma, 
this leads to:
\begin{equation}
\label{CvyxIneq2:001-new}
\begin{split}
\sup_{t_0 \leq t \leq T}
\biggl[
\sum_i {\mathbb E}^{\bZ}
\bigl[ 
\vert
U^{N,i}_{t}
- 
V^{N,i}_{t}
\vert^2
\bigr] \biggr] \leq CN(r^N)^2\exp(C\theta^N), 
\end{split}
\end{equation}
where $\theta^N$ is given in \eqref{thetaN}. 
Plugging \eqref{CvyxIneq2:001-new} into 
\eqref{zvejhgbjkj}, we deduce that 
\begin{equation*}
 \sum_{j}
\E^{\bZ}
 \biggl[ \int_{t_0}^T
 \vert 
DU^{N,j,j}_{s} 
-
 DV^{N,j,j}_{s}
 \vert^2 ds 
  \biggr] \leq CN(r^N)^2\theta^N \exp(C\theta^N).
\end{equation*}
Inserting this bound in the right-hand side of \eqref{CvyxIneq2:001-new-0} and applying Gronwall's lemma once again, we finally end up 
with: 
\begin{equation}
\label{CvyxIneq2:001}
\begin{split}
&
\sup_{t \in [t_0,T]}
{\mathbb E}^{\bZ}
\bigl[ 
\vert
U^{N,i}_{t}
- 
V^{N,i}_{t}
\vert^2
\bigr]
+ \E^{\bZ}
 \biggl[ \int_{t_0}^T
 \vert 
DU^{N,i,i}_{s} 
-
 DV^{N,i,i}_{s}
 \vert^2 ds 
  \biggr]\\
& \qquad \leq 
 C(r^N)^2\theta^N\exp(C\theta^N).
\end{split}
\end{equation}
This gives \eqref{CvyxIneq2BIS}. By the definition of $U^{N,i}$ and $V^{N,i}$ this implies that 
$$
\left| u^{N,i}(t_0,\bZ)-v^{N,i}(t_0,\bZ)\right|\leq  Cr^N(\theta^N)^{1/2}\exp(C\theta^N)\qquad {\rm a.e.}.
$$
Then choosing $\bZ_i$ with a uniform law on $\T^d$ implies, by continuity of $u^{N,i}$ and $v^{N,i}$ that 
$$
\left| u^{N,i}(t_0,\bx)-v^{N,i}(t_0,\bx)\right|\leq  Cr^N(\theta^N)^{1/2}\exp(C\theta^N) \qquad \forall \bx\in (\T^d)^N,
$$
which shows \eqref{uNi-vNi}.

\bigskip

We now  estimate the difference $X_{i,t}-Y_{i,t}$, for $t \in [t_0,T]$
and $i \in \{1,\dots,N\}$. 
In view of the equation satisfied by the processes $(X_{i,t})_{t \in [t_0,T]}$ and by $(Y_{i,t})_{t \in [t_0,T]}$, we have
\begin{equation}
\label{eq:quelles:H:utiles:3}
\begin{split}
 |X_{i,t}-Y_{i,t}| &\leq   \int_{t_0}^t 
\bigl|D_pH \bigl(X_{i,s}, D_{x_{i}}
u^{N,i}(s,\bX_{s})
\bigr) -D_pH
\bigl(Y_{i,s}, D_{x_{i}}v^{N,i}(s,\bY_{s})
\bigr)
\bigr|
 ds 
\\
& \leq C \int_{t_0}^t 
(1+ \|D^2_{x_i,x_i}u^{N,i}\|_\infty )|X_{i,s}-Y_{i,s}|+ \sum_{j\neq i}  \|D^2_{x_i,x_j}u^{N,i}\|_\infty |X_{j,s}-Y_{j,s}| ds\\
& + \int_{t_0}^t  \bigl|D_pH \bigl(Y_{i,s}, D_{x_{i}}
u^{N,i}(s,\bY_{s})
\bigr) -D_pH
\bigl(Y_{i,s}, D_{x_{i}}v^{N,i}(s,\bY_{s})
\bigr)
\bigr|
 ds \\
&\leq  C\hat \alpha^N\int_{t_0}^t |X_{i,s}-Y_{i,s}|ds  + 
C\hat \beta^N \sum_{j\neq i} \int_{t_0}^t |X_{j,s}-Y_{j,s}|ds\\
& +C \int_{t_0}^T  
\bigl|
D U^{N,i,i}_{s}
-
D V^{N,i,i}_{s}
\bigr|  ds.
\end{split}
\end{equation}
Computing as before the inequality satisfied by the sum $\sum_i  |X_{i,t}-Y_{i,t}|$, using the exchangeability of the $((X^{N,i}, Y^{N,i})$ and  
\eqref{CvyxIneq2:001}, we obtain 
\eqref{estixi-yiBIS} thanks to Gronwall inequality.

\end{proof}

Following exactly the same argument as for Theorem 2.13 in \cite{CDLL}, we deduce: 

\begin{Corollary}
\label{cor:mainCVBIS}
Fix $N\geq 1$ and $(t_0,m_0)\in [0,T]\times \Pk$.  For any $i\in\{1,\dots, N\}$ and $x\in \T^d$, let us set 
$$
w^{N,i}(t_0,x_i,m_0) :=  \int_{(\T^d)^{(N-1)}} v^{N,i}(t_0, \bx) \prod_{j\neq i}m_0({\rm dx}_j) \qquad {\rm where }\; \bx=(x_1,\dots, x_N).
$$ 
Then 
$$
\left\| w^{N,i}(t_0,\cdot, m_0)-U^N(t_0,\cdot, m_0)\right\|_{\infty} \leq \left\{\begin{array}{ll}
 Cr^N(\theta^N)^{1/2} \exp(C\theta^N)+ C\|D_mU^N\|_\infty N^{-1/d} & {\rm if}\; d\geq 3,\\
 Cr^N(\theta^N)^{1/2} \exp(C\theta^N)+ C\|D_mU^N\|_\infty N^{-1/2}\log(N) & {\rm if}\; d=2,\\
  Cr^N(\theta^N)^{1/2} \exp(C\theta^N)+ C\|D_mU^N\|_\infty N^{-1/2} & {\rm if}\; d=1.
\end{array}\right. 
$$
where the constant $C$ does not depend on $t_0$, $m_0$, $i$ and $N$ and where  $\theta^N$ and $\hat \theta^N$ are defined before Theorem \ref{thm:CvyxBIS}.
\end{Corollary}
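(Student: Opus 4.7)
The plan is to estimate $w^{N,i}(t_0,x,m_0)-U^N(t_0,x,m_0)$ by inserting the intermediate quantity $\int_{(\T^d)^{N-1}} u^{N,i}(t_0,\bx)\prod_{j\neq i} m_0({\rm d}x_j)$, where $u^{N,i}(t,\bx) = U^N(t,x_i,m^{N,i}_{\bx})$ is the finite-dimensional projection of $U^N$ studied in Proposition \ref{Prop:equNi}. By the triangle inequality we obtain two contributions to control:
\begin{align*}
I_1(x) &:= \int_{(\T^d)^{N-1}} (v^{N,i}-u^{N,i})(t_0,\bx)\prod_{j\neq i} m_0({\rm d}x_j),\\
I_2(x) &:= \int_{(\T^d)^{N-1}} \bigl[U^N(t_0,x,m^{N,i}_{\bx})-U^N(t_0,x,m_0)\bigr]\prod_{j\neq i} m_0({\rm d}x_j),
\end{align*}
where, when computing $w^{N,i}(t_0,x,m_0)$, the variable $x_i$ is fixed equal to $x$, so $m^{N,i}_{\bx}$ depends only on the integrated variables.

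For $I_1$, I would simply take the uniform bound $\|v^{N,i}-u^{N,i}\|_\infty \leq Cr^N(\theta^N)^{1/2}\exp(C\theta^N)$ supplied by \eqref{uNi-vNi} of Theorem \ref{thm:CvyxBIS}. For $I_2$, I would use the Lipschitz estimate on $U^N$ with respect to the measure in the Monge--Kantorovich distance, which (as recalled after Definition \ref{def:Diff}) is controlled by $\|D_mU^N\|_\infty$:
\[
\bigl|U^N(t_0,x,m^{N,i}_{\bx})-U^N(t_0,x,m_0)\bigr| \leq \|D_mU^N\|_\infty\, \dk(m^{N,i}_{\bx},m_0).
\]
Integrating against $\prod_{j\neq i} m_0({\rm d}x_j)$ reduces $|I_2(x)|$ to $\|D_mU^N\|_\infty$ times the expected $\dk$-distance between $m_0$ and the empirical measure of $N-1$ i.i.d.\ samples drawn from $m_0$.

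The main (and only non-routine) ingredient is then the classical rate of convergence of the empirical measure to its mean in the $1$-Wasserstein distance on the compact torus $\T^d$: if $(Z_j)_{j=1,\ldots,N-1}$ are i.i.d.\ with law $m_0$, then
\[
\E\Bigl[\dk\Bigl(\tfrac{1}{N-1}\sum_{j=1}^{N-1}\delta_{Z_j},\, m_0\Bigr)\Bigr] \;\leq\;
\begin{cases} C N^{-1/d} & \text{if } d\geq 3,\\ C N^{-1/2}\log N & \text{if } d=2,\\ C N^{-1/2} & \text{if } d=1,\end{cases}
\]
with $C$ independent of $m_0$ (see, e.g., the Fournier--Guillin estimates). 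Combining this with the bound on $I_1$ yields the three announced inequalities. I expect the mild obstacle to be verifying that the dimension-dependent empirical measure rate can indeed be applied uniformly in $m_0\in\Pk$ without extra moment assumptions; on the torus this is immediate by compactness, which is why the statement can be written with the stated dimension-dependent rates and no dependence on $m_0$.
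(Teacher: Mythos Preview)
Your proposal is correct and follows essentially the same route as the paper: you split $w^{N,i}-U^N$ via the intermediate $u^{N,i}=U^N(\cdot,\cdot,m^{N,i}_{\bx})$, bound the first piece by the uniform estimate \eqref{uNi-vNi} from Theorem~\ref{thm:CvyxBIS}, and control the second piece using the Lipschitz property of $U^N$ in $m$ (with constant $\|D_mU^N\|_\infty$) together with the Fournier--Guillin rates for the empirical measure on $\T^d$. This is exactly the argument given in the paper.
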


\begin{proof}
We use the the Lipschitz continuity of $U^N$ and a result by Fournier and Guillin \cite{fournier2015rate} to deduce that, for $d\geq 3$  and for any $x_i\in \T^d$, 
$$
\begin{array}{l}
 \ds  \int_{\T^{d(N-1)}} |u^{N,i}(t,\bx)- U^N(t,x_i, m_0)|\prod_{j\neq i} m_0({\rm dx}_j) \\  
\qquad \qquad \ds = \;  \ds \int_{\T^{d(N-1)}} |U^N(t,x_i,m^{N,i}_\bx)- U^N(t,x_i, m_0)|\prod_{j\neq i} m_0({\rm dx}_j)\\
\qquad \qquad \ds \leq  \|D_mU^N\|_\infty\int_{\T^{d(N-1)}} \dk(m^{N,i}_\bx,m_0)\prod_{j\neq i} m_0({\rm dx}_j) \; \leq \; C\|D_mU^N\|_\infty N^{-1/d}.
\end{array}
$$
If $d=1$ (respectively $d=2$), the $N^{-1/d}$ has to be replaced by $N^{-1/2}$ (respectively $N^{-1/2}\log(N)$). 
Combining Theorem \ref{thm:CvyxBIS} with the above inequality, we obtain therefore, for $d\geq 3$, 
$$
\begin{array}{l}
\ds \left| w^{N,i}(t_0,x_i, m_0)-U^N(t_0,x_i, m_0)\right| \\
\qquad  \ds = \left| \int_{\T^{d(N-1)}}v^{N,i}\bigl(t,(x_j)\bigr)\prod_{j\neq i}m_0({\rm dx}_j) -U^N(t,x_i, m_0)\right|\\
\qquad  \ds = \left\|v^{N,i}-u^{N,i}\right\|_\infty+ \int_{\T^{dN}} |u^{N,i}(t,\bx)- U^N(t,x_i, m_0)|\prod_{j\neq i} m_0({\rm dx}_j) \\
\qquad \ds \leq Cr^N(\theta^N)^{1/2} \exp(C\theta^N)+ C\|D_mU^N\|_\infty N^{-1/d}.
\end{array}
$$
As above, the last term is  $N^{-1/2}$ if $d=1$ and $N^{-1/2}\log(N)$ if $d=2$. 
\end{proof}


\subsection{Putting the estimates together}

Here we fix a initial condition $(t_0,m_0)\in [0,T]\times \Pk$, where $m_0$ is a smooth, positive density. Let $v^{N,i}$ be the solution of the Nash system \eqref{Nash0}. Following the averaging procedure of Corollary \ref{cor:mainCVBIS}, we set 
$$
w^{N,i}(t_0,x,m_0) := \inte\dots \inte v^{N,i}(t_0, \bx) \prod_{j\neq i}m_0({\rm dx}_j) \qquad {\rm where }\; \bx=(x_1,\dots, x_N).
$$ 
Let $u$ be the solution to the MFG system \eqref{e.MFGsystLoc}. 

\begin{Theorem}\label{thm:main} We have 
\be\label{Cvw-u}
\left\| w^{N,i}(t_0,\cdot, m_0)-u(t_0,\cdot)\right\|_\infty \leq 
\left\{\begin{array}{ll}
 CN^{-\tfrac{1}{d}} K_{N,\alpha}^{15} \exp(CK_{N,\alpha}^{6})+C\left(k^{R,\alpha}_N\right)^{\tfrac{2}{d+2}} & {\rm if}\; d\geq 3\\
CN^{-\tfrac{1}{2}}\ln(N) K_{N,\alpha}^{15} \exp(CK_{N,\alpha}^6)+C\left(k^{R,\alpha}_N\right)^{\tfrac{1}{2}}  & {\rm if}\; d=2\\
CN^{-\tfrac{1}{2}} K_{N,\alpha}^{15} \exp(CK_{N,\alpha}^{6})+C\left(k^{R,\alpha}_N\right)^{\tfrac{2}{3}} & {\rm if}\; d=1.
\end{array}\right. 
\ee
where $R$ and $\alpha$ do not depend on  $N$ (but depends on $m_0$). 
\end{Theorem}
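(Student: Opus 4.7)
The proof is essentially a triangle-inequality splitting that routes through the master equation, with all the heavy lifting already done in the preceding sections. My plan is as follows.

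\textbf{Step 1 (Decomposition).} Write
\[
w^{N,i}(t_0,x,m_0) - u(t_0,x) = \bigl(w^{N,i}(t_0,x,m_0) - U^N(t_0,x,m_0)\bigr) + \bigl(U^N(t_0,x,m_0) - u(t_0,x)\bigr),
\]
and estimate the two pieces separately. The first measures how far the averaged Nash values sit from the master-equation projection; the second is the nonlocal-to-local MFG comparison.

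\textbf{Step 2 (Master equation vs.\ local MFG).} By the representation formula $U^N(t_0,\cdot,m_0)=u^N(t_0,\cdot)$ recalled right after the statement of Theorem \ref{thm:master}, the second term equals $u^N(t_0,\cdot)-u(t_0,\cdot)$. Since $m_0$ is a smooth positive density, Proposition \ref{prop.estiuep-u} applies and produces constants $R,\alpha>0$ depending on $m_0$ (but not on $N$) such that
\[
\|U^N(t_0,\cdot,m_0)-u(t_0,\cdot)\|_\infty \;\le\; \|u^N-u\|_{W^{1,\infty}} \;\le\; C\bigl(k^{R,\alpha}_N\bigr)^{2/(d+2)},
\]
which is exactly the second summand in \eqref{Cvw-u} for each of the three dimensional cases ($2/(d+2)$ equals $1/2$ for $d=2$ and $2/3$ for $d=1$).

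\textbf{Step 3 (Averaged Nash vs.\ master equation).} For the first piece, apply Corollary \ref{cor:mainCVBIS} verbatim: it gives
\[
\bigl\|w^{N,i}(t_0,\cdot,m_0)-U^N(t_0,\cdot,m_0)\bigr\|_\infty \;\le\; Cr^N(\theta^N)^{1/2}\exp(C\theta^N) + C\|D_mU^N\|_\infty\,\varepsilon_N(d),
\]
where $\varepsilon_N(d)$ is $N^{-1/d}$, $N^{-1/2}\ln N$, or $N^{-1/2}$ according to whether $d\ge 3$, $d=2$, or $d=1$. I then insert the three numerical bounds listed just after Theorem \ref{them:KeyEsti}: $r^N\le CK_{N,\alpha}^{12}/N$, $\theta^N\le CK_{N,\alpha}^{6}$, and $\|D_mU^N\|_\infty\le CK_{N,\alpha}$ (the latter is the $k=1$ case of Corollary \ref{cor:estiDmU}). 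This yields
\[
Cr^N(\theta^N)^{1/2}\exp(C\theta^N) \;\le\; CN^{-1}K_{N,\alpha}^{15}\exp(CK_{N,\alpha}^{6}),
\]
and since $N^{-1}\le\varepsilon_N(d)$ in every case, the whole first piece is bounded by $C\varepsilon_N(d)K_{N,\alpha}^{15}\exp(CK_{N,\alpha}^{6})$, absorbing also the lower-order $\|D_mU^N\|_\infty\varepsilon_N(d)=CK_{N,\alpha}\varepsilon_N(d)$.

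\textbf{Step 4 (Conclusion).} Adding the bounds from Steps 2 and 3 produces precisely the dimension-dependent estimate \eqref{Cvw-u}. There is no genuine obstacle: the only point requiring care is verifying that the H\"older radius $R$ and exponent $\alpha$ coming out of Proposition \ref{prop.estiuep-u} depend only on $m_0$ and the data, so that the exponent $2/(d+2)$ on $k^{R,\alpha}_N$ does not drift with $N$; this is explicit in the statement of that proposition. Everything else is mechanical bookkeeping on the powers of $K_{N,\alpha}$.
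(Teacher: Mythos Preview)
Your proof is correct and follows essentially the same route as the paper: split via $U^N(t_0,\cdot,m_0)=u^N(t_0,\cdot)$, apply Proposition~\ref{prop.estiuep-u} for the $u^N-u$ piece, and Corollary~\ref{cor:mainCVBIS} for the $w^{N,i}-U^N$ piece, then plug in the bounds $r^N\le CK_{N,\alpha}^{12}/N$, $\theta^N\le CK_{N,\alpha}^6$, $\|D_mU^N\|_\infty\le CK_{N,\alpha}$. Your bookkeeping on the powers of $K_{N,\alpha}$ and the observation $N^{-1}\le \varepsilon_N(d)$ used to merge the two terms of Corollary~\ref{cor:mainCVBIS} are exactly what the paper does implicitly.
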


In particular, if $K_{N,\alpha}= O\left((\ln(N))^{\theta}\right)$ for some $\theta\in (0, 1/(6 d))$, then $w^{N,i}(t_0,\cdot, m_0)$ converges uniformly to $u(t_0,\cdot)$.  

\begin{proof} For $N\in \N$, let $u^N$ be the solution to the MFG system \eqref{e.MFG.t0m0}. 
Recalling Proposition \ref{prop.estiuep-u}, we have 
$$
\|u^N(t_0,\cdot)-u(t_0,\cdot)\|_\infty \leq C\left(k^{R,\alpha}_N\right)^{\tfrac{2}{d+2}},
$$
where $R$ is a bound on the $C^\alpha$ norm of the $m^N$ (Proposition \ref{prop.estiuep-u}). As 
$$
U^N(t_0,x,m_0)= u^N(t_0,x),
$$
Corollary \ref{cor:mainCVBIS} implies that (for $N\geq 3$), 
\begin{align*}
 \left\| w^{N,i}(t_0,\cdot, m_0)-u^N(t_0,\cdot)\right\|_\infty &\leq  Cr^N(\theta^N)^{1/2} \exp(C\theta^N)+ C\|D_mU^N\|_\infty N^{-1/d} \\
& \leq   
 CN^{-\tfrac{1}{d}} K_{N,\alpha}^{15} \exp(CK_{N,\alpha}^{6}). 
\end{align*}
\end{proof}

We now consider a particular case: 
\begin{Corollary}\label{prop.ex} Assume that 
$$
F^N(x,m)= F(\cdot, \xi^{\ep_N}\ast m(\cdot))\ast \xi^\ep(x),
$$
where $\xi^\ep$ are as in the example in Proposition \ref{prop.ex0}. If one chooses $\ep_N = \ln(N)^{-\beta}$, with $\beta \in (0, (6d(2d+15))^{-1})$, the convergence in \eqref{Cvw-u} is of order $A(\ln(N))^{-1/B}$ for some constants $A,B$. 
\end{Corollary}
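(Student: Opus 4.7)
The proof is a direct verification: substitute the explicit estimates of Proposition \ref{prop.ex0} into the bound of Theorem \ref{thm:main} and keep track of exponents. First I would fix a H\"older exponent $\alpha\in(0,1)$ to be chosen at the end, and write $\ep_N=(\ln N)^{-\beta}$. Proposition \ref{prop.ex0} then gives
\[
k^{R,\alpha}_N\leq C(1+R)(\ln N)^{-\alpha\beta},\qquad K_{N,\alpha}\leq C(\ln N)^{\beta(2d+12+3\alpha)}.
\]

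Next I would insert these two quantities into the bound of Theorem \ref{thm:main}, focusing on $d\geq 3$ for concreteness (the cases $d\in\{1,2\}$ are identical up to a harmless logarithmic factor). The ``polynomial'' summand becomes
\[
\bigl(k^{R,\alpha}_N\bigr)^{2/(d+2)}\leq C(\ln N)^{-2\alpha\beta/(d+2)},
\]
which is already of the desired form $A(\ln N)^{-1/B}$ with $B:=(d+2)/(2\alpha\beta)$. The ``exponential'' summand can be written as
\[
N^{-1/d}K_{N,\alpha}^{15}\exp\bigl(CK_{N,\alpha}^{6}\bigr)\leq \exp\Bigl(-\tfrac{\ln N}{d}+C(\ln N)^{6\beta(2d+12+3\alpha)}+O(\ln\ln N)\Bigr),
\]
and this decays super-polynomially in $\ln N$ (indeed faster than $N^{-1/(2d)}$) as soon as the exponent $6\beta(2d+12+3\alpha)$ is strictly less than $1$.

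It remains to check that the constraint on $\beta$ leaves enough room to choose such an $\alpha$. The assumption $\beta<(6d(2d+15))^{-1}$ gives $6\beta(2d+15)<1/d\leq 1$, so any choice with $3\alpha\leq 3$, for instance $\alpha=1/2$, yields
\[
6\beta(2d+12+3\alpha)\leq 6\beta(2d+15)<1.
\]
Hence the exponential term is negligible with respect to the polynomial one, and the whole expression is of order $A(\ln N)^{-1/B}$, which is exactly the claim.

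No step presents a serious obstacle; the only piece of bookkeeping worth checking is that the same $\alpha$ can be used throughout (it enters the bound on $\|m_0\|_{C^\alpha}$, the H\"older constant $R$ in Proposition \ref{prop.estiuep-u}, and the estimates on $k^{R,\alpha}_N$ and $K_{N,\alpha}$). Since $m_0$ is smooth, $\|m_0\|_{C^\alpha}$ is uniformly bounded for every $\alpha\in(0,1)$, so $R$ can be taken independent of this choice and the estimates compose as required.
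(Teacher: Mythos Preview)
Your argument is correct and is exactly the verification the paper has in mind: the paper states Corollary~\ref{prop.ex} without proof, as the immediate consequence of plugging the bounds \eqref{estikKalpha0} of Proposition~\ref{prop.ex0} into Theorem~\ref{thm:main}. Your tracking of the exponents---in particular the observation that $6\beta(2d+12+3\alpha)\leq 6\beta(2d+15)<1/d\leq 1$ for any $\alpha\in(0,1)$, so that the term $N^{-1/d}K_{N,\alpha}^{15}\exp(CK_{N,\alpha}^6)$ decays like a negative power of $N$ and is dominated by the $(\ln N)^{-2\alpha\beta/(d+2)}$ contribution---is precisely the computation that makes the corollary work.
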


\subsection{Convergence of the optimal solutions}

We complete the paper by a discussion on the convergence of the optimal solutions and a propagation of chaos. 

Let us explain the problem. Let $m_0\in \Pk$ with a smooth, positive density.  Let $(v^{N,i})$ be the solution to the Nash system \eqref{Nash0} and, for $t_0\in [0,T)$, $m_0\in \Pw$, $(u,m)$ be the solution to the MFG system \eqref{e.MFGsystLoc} starting at time $t_0$ from $m_0$. Let $(Z_i)$ be an i.i.d family of $N$ random variables of law $m_0$. We set $\bZ=(Z_1,\dots, Z_N)$.  Let also $((B^{i}_{t})_{t \in [ 0,T]})_{i \in \{1,\dots,N\}}$ be a family of $N$ independent Brownian motions which is also independent of $(Z_i)$. 
We consider the optimal trajectories $(\bY_t=(Y_{1,t},\dots, Y_{N,t}))_{t\in [t_0,T]}$ for the $N-$player game: 
$$
\left\{\begin{array}{l}
dY_{i,t}= -D_pH(Y_{i,t}, D_{x_i}v^{N,i}(t, \bY_t)){\rm dt} +\sqrt{2} dB^{i}_t, \qquad t\in [t_0,T],\\
Y_{i,t_0}= Z_i
\end{array}\right.
$$
and the optimal solution $(\tilde \bX_t=(\tilde X_{1,t}, \dots, \tilde X_{N,t}))_{t\in[t_0,T]}$ to the limit MFG system:
$$
\left\{\begin{array}{l}
d\tilde X_{i,t} =  -D_pH\left(\tilde X_{i,t}, D_xu
\bigl(t,\tilde X_{i,t}\bigr)\right){\rm dt} +\sqrt{2}dB^i_t,\qquad t\in [t_0,T], \\
\tilde X_{i,t_0}= Z_i. 
\end{array}\right.
$$
The next result provides an estimate of the distance between the solutions. To fix the ideas, we work in dimension $d\geq 3$. 

\begin{Theorem}\label{thm:CvMFG}  Under our standing assumptions, for any $N\geq 1$ and any $i\in \{1,\dots, N\}$, we have
$$
\E\biggl[ \sup_{t\in [t_0,T]} \left|Y_{i,t}-\tilde X_{i,t}\right| \biggr]\leq 
 C\left[ K_{N,\alpha}^{25/2} N^{-1/d} \exp\{CK_{N,\alpha}^6\}+\left(k^{R,\alpha}_N\right)^{\frac{2}{d+2}}\right],
$$
where the constant $C>0$ is independent of $t_0$ and $N$, but depends on $m_0$. 
\end{Theorem}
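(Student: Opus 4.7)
The plan is to bound $|Y_{i,t}-\tilde X_{i,t}|$ by triangle inequality through two intermediate processes. First introduce $X_{i,t}$ as in \eqref{defxitBIS} (SDE driven by $D_{x_i}u^{N,i}$ with all $N$ particles coupled), and second introduce $X^{N}_{i,t}$, the unique solution of
\begin{equation*}
dX^{N}_{i,t}=-D_pH\bigl(X^{N}_{i,t},D_xu^N(t,X^{N}_{i,t})\bigr)\,dt+\sqrt{2}\,dB^i_t,\qquad X^{N}_{i,t_0}=Z_i,
\end{equation*}
i.e.\ independent copies of the MFG trajectory associated to the coupling $F^N$. Then I would split
\begin{equation*}
\E\bigl[\sup_{t\in[t_0,T]}|Y_{i,t}-\tilde X_{i,t}|\bigr]\le \E\bigl[\sup_{t}|Y_{i,t}-X_{i,t}|\bigr]+\E\bigl[\sup_{t}|X_{i,t}-X^{N}_{i,t}|\bigr]+\E\bigl[\sup_{t}|X^{N}_{i,t}-\tilde X_{i,t}|\bigr].
\end{equation*}
The first term is directly controlled by \eqref{estixi-yiBIS} of Theorem~\ref{thm:CvyxBIS}. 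The last term is controlled by Corollary~\ref{cor:solEDS} applied to the coupling $F^N$ vs.\ $F$, giving a bound of order $(k^{R,\alpha}_N)^{2/(d+2)}$. The heart of the proof is to bound the middle term.

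For the middle term I would exploit the identities $D_{x_i}u^{N,i}(t,\bX_t)=D_xU^N(t,X_{i,t},m^{N,i}_{\bX_t})$ and $D_xu^N(t,X^N_{i,t})=D_xU^N(t,X^N_{i,t},m^N(t))$. Splitting the drift difference as usual and using Lipschitz continuity of $D_pH$ together with Proposition~\ref{prop:reguum} and Corollary~\ref{cor:estiDmU},
\begin{align*}
\bigl|D_pH(X_{i,t},D_{x_i}u^{N,i}(t,\bX_t))-D_pH(X^{N}_{i,t},D_xu^N(t,X^{N}_{i,t}))\bigr|
&\le C(1+\|D^2_{xx}U^N\|_\infty)|X_{i,t}-X^{N}_{i,t}|\\
&\quad+C\|D_{m,x}U^N\|_\infty\,\dk\bigl(m^{N,i}_{\bX_t},m^N(t)\bigr).
\end{align*}
To control the Monge--Kantorovich distance I insert the empirical measure $m^{N,i}_{\bX^{N}_t}$ of the independent particles and use the triangle inequality:
\begin{equation*}
\dk\bigl(m^{N,i}_{\bX_t},m^N(t)\bigr)\le \frac{1}{N-1}\sum_{j\neq i}|X_{j,t}-X^{N}_{j,t}|+\dk\bigl(m^{N,i}_{\bX^{N}_t},m^N(t)\bigr).
\end{equation*}
Since the particles $(X^{N}_{j,t})_{j\neq i}$ are i.i.d.\ with law $m^N(t)$, the Fournier--Guillin estimate (as invoked in Corollary~\ref{cor:mainCVBIS}) bounds the expectation of the second piece by $CN^{-1/d}$ in dimension $d\ge 3$.

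By exchangeability of the $(X_{j,t}-X^{N}_{j,t})_{j}$, setting $\psi(t)=\E[\sup_{s\le t}|X_{i,s}-X^{N}_{i,s}|]$ one obtains an integral inequality
\begin{equation*}
\psi(t)\le C\bigl(1+\|D^2_{xx}U^N\|_\infty+\|D_{m,x}U^N\|_\infty\bigr)\int_{t_0}^t\psi(s)\,ds+C\|D_{m,x}U^N\|_\infty N^{-1/d},
\end{equation*}
and Gr\"onwall's lemma yields $\psi(T)\le CK_{N,\alpha}N^{-1/d}\exp(CK_{N,\alpha}^3)$. Collecting the three contributions with the polynomial bounds $r^N\lesssim K_{N,\alpha}^{12}/N$, $(\theta^N\hat\theta^N)^{1/2}\lesssim K_{N,\alpha}^{9/2}$ and $\theta^N+\hat\theta^N\lesssim K_{N,\alpha}^6$ provided by Theorem~\ref{them:KeyEsti}, one absorbs the $1/N$ into $N^{-1/d}$ and consolidates the polynomial prefactors under the dominant exponential $\exp(CK_{N,\alpha}^6)$, which gives the announced bound. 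The main technical obstacle is the control of $\dk(m^{N,i}_{\bX_t},m^N(t))$: because the particles $X_{j,t}$ are not independent (they are coupled through $u^{N,i}$), one cannot apply Fournier--Guillin directly and must route through the genuinely i.i.d.\ intermediate $\bX^{N}_t$ while tracking the propagation of the coupling error via Gr\"onwall.
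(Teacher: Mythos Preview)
Your proposal is correct and follows essentially the same route as the paper: the same triangle inequality through the intermediate processes $X_{i,t}$ and $X^{N}_{i,t}$ (the paper calls the latter $\hat X_{i,t}$), the same use of Theorem~\ref{thm:CvyxBIS} and Corollary~\ref{cor:solEDS} for the outer pieces, and the same Gr\"onwall argument for the middle piece via the Lipschitz regularity of $D_xU^N$, exchangeability, and the Fournier--Guillin bound applied to the i.i.d.\ particles $X^{N}_{j,t}$. Your bookkeeping of the powers of $K_{N,\alpha}$ in the middle term is slightly more conservative than the paper's (you get $\exp(CK_{N,\alpha}^3)$ where the paper writes $\exp(CK_{N,\alpha})$), but this is immaterial since everything is absorbed into the dominant $\exp(CK_{N,\alpha}^6)$.
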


In particular, if $K_{N,\alpha}= O\left((\ln(N))^{\theta}\right)$ for some $\theta\in (0, 1/(6d))$, then the optimal trajectories $(Y_{i,t})$ converge to the 
$(\tilde X_{i,t})$ and become asymptotically i.i.d. 

In order to illustrate the result, let us come back to our main example: 
\begin{Proposition}\label{prop.choasex} Assume that $F^N$ are of the form \eqref{Fepex0}. Then, for $\ep_N= \ln(N)^{-\beta}$ for $\beta \in (0, [6d(2d+15)]^{-1})$, we have 
$$
\E\biggl[ \sup_{t\in [t_0,T]} \left|Y_{i,t}-\tilde X_{i,t}\right| \biggr]\leq A(\ln(N))^{-1/B},
$$
for some constants $A,B>0$. 
\end{Proposition}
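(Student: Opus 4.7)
Proposition \ref{prop.choasex} is a direct quantitative corollary of Theorem \ref{thm:CvMFG}, once the general constants $k^{R,\alpha}_N$ and $K_{N,\alpha}$ are replaced by the explicit bounds given in Proposition \ref{prop.ex0} for the mollified coupling \eqref{Fepex0}. The argument amounts to substituting these bounds, writing everything as a function of $\ln N$, and checking that the restrictive range of $\beta$ is precisely what is needed to kill the exponential factor $\exp\{CK_{N,\alpha}^6\}$.

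\emph{Step 1 (substitution).} Fix any $\alpha\in(0,1)$. Proposition \ref{prop.ex0} gives
$$
k^{R,\alpha}_N\leq C(1+R)\,\epsilon_N^{\alpha}, \qquad K_{N,\alpha}\leq C\,\epsilon_N^{-(2d+12+3\alpha)}.
$$
With $\epsilon_N=(\ln N)^{-\beta}$, this becomes
$$
k^{R,\alpha}_N\leq C(\ln N)^{-\alpha\beta}, \qquad K_{N,\alpha}^{6}\leq C(\ln N)^{\gamma}, \quad \gamma:=6\beta(2d+12+3\alpha).
$$

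\emph{Step 2 (controlling the exponential factor).} Under the hypothesis $\beta<[6d(2d+15)]^{-1}$, and since $\alpha<1$,
$$
\gamma<6\beta(2d+15)<\frac{1}{d}\leq 1,
$$
so $(\ln N)^\gamma=o(\ln N)$ as $N\to\infty$. In particular, for every $\eta>0$,
$$
\exp\{CK_{N,\alpha}^{6}\}\leq \exp\{C(\ln N)^{\gamma}\}\leq N^{\eta}
$$
for $N$ large enough. Consequently the first summand in the bound of Theorem \ref{thm:CvMFG} satisfies
$$
K_{N,\alpha}^{25/2}\,N^{-1/d}\exp\{CK_{N,\alpha}^{6}\}\leq C(\ln N)^{c_1}\,N^{-1/d+\eta}
$$
for some $c_1>0$ and arbitrarily small $\eta>0$, and therefore decays faster than any negative power of $\ln N$.

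\emph{Step 3 (conclusion).} The second summand in Theorem \ref{thm:CvMFG} is
$$
\bigl(k^{R,\alpha}_N\bigr)^{2/(d+2)}\leq C(\ln N)^{-\frac{2\alpha\beta}{d+2}}.
$$
It therefore dominates and we obtain the claimed inequality with, for instance, $B=(d+2)/(2\alpha\beta)$ and some $A>0$. There is no genuine mathematical obstacle here; the content is bookkeeping. The key point, and the reason for the precise form of the admissible range of $\beta$, is that the condition $\beta<[6d(2d+15)]^{-1}$ is exactly what is needed to guarantee $\gamma<1$ with enough margin, so that the blow-up of the regularity of $F^N$ (encoded in $K_{N,\alpha}$) is tamed by the empirical measure rate $N^{-1/d}$ appearing in Theorem \ref{thm:CvMFG}.
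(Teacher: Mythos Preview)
Your proof is correct and follows exactly the approach the paper intends: the proposition is stated in the paper without proof, as an immediate corollary of Theorem \ref{thm:CvMFG} combined with the explicit bounds of Proposition \ref{prop.ex0}, and your substitution-and-bookkeeping argument carries this out precisely. The only minor remark is that your intermediate inequality $\gamma<1/d$ is stronger than what you actually use (namely $\gamma<1$), but since the stated range of $\beta$ does give $\gamma<1/d$, nothing is lost.
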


\begin{proof}[Proof of Theorem \ref{thm:CvMFG}.]  Let $U^N$ be a solution to the master equation \eqref{MasterEq} and set $u^{N,i}(t,\bx)= U^N(t,x_i,m^{N,i}_{\bx})$. Let  $(X_{i,t})$ be the solution to
$$
\left\{\begin{array}{l}
dX_{i,t}= -D_pH\bigl(X_{i,t}, D_{x_i}u^{N,i}(t, \bX_t)\bigr){\rm dt} +\sqrt{2} dB^{i}_t\qquad t\in [t_0,T],\\
X_{i,t_0}= Z_i,
\end{array}\right.
$$
and $(\hat X_{i,t})$ be the solution to 
$$
\left\{\begin{array}{l}
d\hat X_{i,t} =  -D_pH\left(\hat X_{i,t}, D_xu^N
\bigl(t,\hat X_{i,t}\bigr)\right){\rm dt} +\sqrt{2}dB^i_t \qquad t\in [t_0,T],\\
\hat X_{i,t_0}= Z_i, 
\end{array}\right.
$$
where $(u^N,m^N)$ is the solution of the MFG system \eqref{e.MFG.t0m0}. Note that the $(\hat X_{i,t})$ are i.i.d. with law $(m^N(t))$. As, for any $t\in [t_0,T]$,  
$$
u^N(t,\cdot)= U^N(t, \cdot, m^N(t)), 
$$
the $(\hat X_{i,t})$ are also solution to 
$$
\left\{\begin{array}{l}
d\hat X_{i,t} =  -D_pH\left(\hat X_{i,t}, D_xU^N
\bigl(t,\hat X_{i,t}, m^N(t)\bigr)\right){\rm dt} +\sqrt{2}dB^i_t \qquad t\in [t_0,T],\\
\hat X_{i,t_0}= Z_i. 
\end{array}\right.
$$
 The main step of the proof is the following claim:
\be
\label{xit-tildexit}
\E \Bigl[ \sup_{t\in [t_0,T] } \bigl| X_{i,t}-\hat X_{i,t}\bigr| \Bigr] \leq CK_{N,\alpha}  N^{-1/d} \exp\{CK_{N,\alpha}\}.
\ee
Let us fix $ i\in \{1, \dots, N\}$ and let 
$$
\rho(t)= \E \Bigl[ \sup_{s\in [t_0,t] }\bigl| X_{i,s}-\hat X_{i,s}\bigr|\Bigr]. 
$$
Then, for any $t_0\leq s\leq t\leq T$, we have  
\begin{align*}
\bigl| X_{i,s}-\hat X_{i,s}\bigr|
&\leq \int_{t_0}^s 
\bigl|  -D_pH\bigl(X_{i,r}, D_{x}U^N(r, X_{i,r}, m^{N,i}_{\bX_r})\bigr)+D_pH \bigl(\hat  X_{i,r}, D_xU^N \bigl(r, \hat X_{i,r},
m^N(r)\bigr)
\bigr)  \bigr| {\rm dr} 
\\
& \leq C\int_{t_0}^s \bigl| X_{i,r}-\hat X_{i,r}\bigr|+ \bigl|D_xU^N(r, X_{i,r}, m^{N,i}_{\bX_r})-D_xU^N \bigl(r, \hat X_{i,r},m^N(r)\bigr)\bigr| {\rm dr},
\end{align*}
where we have used the fact that $D_pH$ is globally Lipschitz continuous. As the map $(x,m)\to D_xU^N(r,x,m)$ is Lipschitz continuous with constant $CK_{N,\alpha}$ (Theorem \ref{them:KeyEsti}),  we get
\begin{align*}
\bigl| X_{i,s}-\hat X_{i,s}\bigr|
\leq CK_{N,\alpha} \int_{t_0}^s 
\Bigl(|X_{i,r}-\hat  X_{i,r}|+ \dk\bigl(m^{N,i}_{\bX_r}, m^{N,i}_{\hat \bX_r}\bigr) 
+ \dk \bigl( m^{N,i}_{\hat \bX_r}, m^N(r)\bigr)
\Bigr) {\rm dr}, 
\end{align*}
where 
\be\label{dkmNXtildeX}
\dk\bigl(m^{N,i}_{\bX_r}, m^{N,i}_{\hat \bX_r}\bigr)
\leq \frac{1}{N-1} \sum_{j\neq i} |X_{j,r}-\hat X_{j,r}|.
\ee
Hence
\begin{equation}\label{oialenrdf}
\bigl| X_{i,s}-\hat X_{i,s}\bigr| \leq 
CK_{N,\alpha}\int_{t_0}^s 
\Bigl(
 |X_{i,r}-\hat  X_{i,r}|+ \frac{1}{N-1} \sum_{j\neq i} |X_{j,r}-\hat X_{j,r}|+ \dk ( m^{N,i}_{\hat \bX_r}, m^N(r)) \Big) dr. 
\end{equation}
As the $(\hat X_{i,t})$ are i.i.d. and $d\geq 3$, we have from \cite{fournier2015rate} that   
$$
\E\Bigl[\dk\Bigl(m^{N,i}_{\hat \bX_r}, m^N(r)\Bigr)\Bigr]
\leq C N^{-1/d}.
$$
So, taking the supremum over $s\in [t_0,t]$ in \eqref{oialenrdf} and then the expectation, gives, since the random variables $(X_{j,r}-\hat X_{j,r})_{j \in \{1,\dots,N\}}$ have the same law: 
$$
\rho(t) =\E\Bigl[ \sup_{s\in [t_0,t]} \bigl| X_{i,s}-\hat X_{i,s}\bigr|\Bigr] 
\leq  CK_{N,\alpha}\int_{t_0}^t \rho(s) ds + CK_{N,\alpha} N^{-1/d}.
$$
 Then Gronwall inequality gives \eqref{xit-tildexit}. \\

We now complete the proof by recalling that, from Theorem \ref{thm:CvyxBIS}, 
$$
\E\bigl[\sup_{t\in [t_0,T]} |Y_{i,t}-X_{i,t}|\bigr]\leq Cr^N(\theta^N\hat \theta^N)^{1/2}\exp(C(\theta^N+\hat \theta^N)),
$$
where 
$$
r^N\leq \frac{CK_{N,\alpha}^{12}}{N}, \qquad \theta^N  \leq CK_{N,\alpha}^{6}, \qquad  \hat \theta^N \leq CK_{N,\alpha}^{3}.
$$
On the other hand, Corollary \ref{cor:solEDS} states that 
$$
\E\left[\sup_{t\in  [t_0,T]} \left|\widetilde X_t-\widehat X_t\right|\right]\leq C\left(k^{R,\alpha}_N\right)^{\frac{2}{d+2}},
$$
Therefore 
$$
\E\biggl[ \sup_{t\in [t_0,T]} \left|Y_{i,t}-\tilde X_{i,t}\right| \biggr]\leq C\left[ K_{N,\alpha} N^{-1/d} \exp\{CK_{N,\alpha}\}+ K_{N,\alpha}^{33/2}N^{-1}\exp(CK_{N,\alpha}^6)+\left(k^{R,\alpha}_N\right)^{\frac{2}{d+2}}\right].
$$

\end{proof}

\section{Appendix}

In the appendix, we state an estimate for equations of the form: 
\be\label{eq:appen}
\left\{\begin{array}{l}
\ds \partial_t w - \Delta w + V(t,x)\cdot Dw =f \qquad {\rm in}\; [0,T]\times \T^d,\\
\ds w(0,x)=w_0(x)\qquad {\rm in}\; \T^d,
\end{array}\right.
\ee
where $V$ is a fixed bounded vector field. 

\begin{Proposition}\label{prop.reguregu} If $w$ is a solution to the above equation with $w_0\in C^{1+\alpha}$, then 
$$
\sup_{t\in [0,T]}\|w(t)\|_{1+\alpha}\leq C\left[\|w_0\|_{1+\alpha}+\|f\|_\infty\right],
$$
where $C$ depends on $\|V\|_\infty$, $T$, $\alpha$ and $d$ only. 
\end{Proposition}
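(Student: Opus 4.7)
I would treat the first-order term $V\cdot Dw$ as a forcing and apply Duhamel's formula with the heat semigroup $(P_t)_{t\geq 0}$ on $\T^d$. Writing the equation as $\partial_t w - \Delta w = f - V\cdot Dw$, the mild formulation reads
$$
w(t) = P_t w_0 + \int_0^t P_{t-s}\bigl(f(s,\cdot)- V(s,\cdot)\cdot Dw(s,\cdot)\bigr) ds.
$$
This formulation is suited to the current setting, since $V$ is assumed only bounded (not H\"older), so a Schauder estimate is not directly available; in contrast, the heat semigroup's smoothing action on H\"older spaces still yields what we need.

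\textbf{Heat semigroup smoothing.} The semigroup $P_t$ on $\T^d$ satisfies, for $0\leq \beta\leq \gamma$ with $\gamma-\beta\leq 2$, the standard bound
$$
\|P_t g\|_{\gamma} \leq C\bigl(1+ t^{-(\gamma-\beta)/2}\bigr)\|g\|_{\beta}.
$$
Applying this with $\gamma = \beta = 1+\alpha$ for the initial-datum term gives $\|P_t w_0\|_{1+\alpha} \leq C\|w_0\|_{1+\alpha}$; applying it with $\gamma = 1+\alpha$, $\beta = 0$ for the source term gives $\|P_{t-s} h\|_{1+\alpha} \leq C(1+(t-s)^{-(1+\alpha)/2})\|h\|_\infty$. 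Since $w_0\in C^{1+\alpha}$ with $\alpha<1$, the exponent satisfies $(1+\alpha)/2 < 1$, so the kernel is integrable on $[0,T]$.

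\textbf{Integral inequality and closing.} Setting $\phi(t) := \sup_{s\in[0,t]}\|w(s)\|_{1+\alpha}$ and using $\|Dw(s)\|_\infty \leq \|w(s)\|_{1+\alpha}$, Duhamel's identity combined with the above estimates gives
$$
\phi(t) \leq C\bigl(\|w_0\|_{1+\alpha} + \|f\|_\infty\bigr) + C\|V\|_\infty \int_0^t \bigl(1+(t-s)^{-(1+\alpha)/2}\bigr)\phi(s) ds,
$$
where $C$ depends only on $T$, $\alpha$, $d$. Finally, I would invoke a generalised Gronwall lemma for weakly singular kernels (of the type in Henry's book on semilinear parabolic equations): any nonnegative $\phi$ satisfying $\phi(t)\leq A + B\int_0^t(t-s)^{-\gamma}\phi(s) ds$ with $\gamma\in[0,1)$ and $A,B\geq 0$ is bounded on $[0,T]$ by $A$ times a constant depending only on $B$, $T$, $\gamma$. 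Applied with $\gamma=(1+\alpha)/2$ and $A = C(\|w_0\|_{1+\alpha}+\|f\|_\infty)$, this yields the claim with a constant $C$ depending only on $\|V\|_\infty$, $T$, $\alpha$, $d$.

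\textbf{Main obstacle.} There is no serious difficulty; the only point requiring some care is that we do not assume any regularity on $V$, so we cannot bootstrap via Schauder's estimate. The Duhamel--semigroup approach circumvents this because it only uses the boundedness of $V\cdot Dw$. The singular-kernel Gronwall lemma is classical, and the exponent $(1+\alpha)/2<1$ ensures it applies.
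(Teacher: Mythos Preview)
Your argument is correct and takes a genuinely different route from the paper. The paper first treats the case $w_0=0$ by a contradiction/compactness argument: assuming the estimate fails along a sequence $(V_n,f_n,w_n)$ with $\|Dw_n\|_\infty\to\infty$, it normalises, uses $L^p$ heat-potential estimates from \cite{LSU} to get compactness of $D\tilde w_n$ in $C^{\beta/2,\beta}$, and derives a contradiction from energy estimates; then it bootstraps to the H\"older bound and handles general $w_0$ by splitting $w=w_1+w_2$ with $w_1$ the free heat flow of $w_0$. Your approach is instead fully direct and constructive: the Duhamel representation together with the analytic-semigroup smoothing $\|P_t g\|_{1+\alpha}\leq C(1+t^{-(1+\alpha)/2})\|g\|_\infty$ reduces the problem to a Volterra inequality with a weakly singular kernel, and Henry's singular Gronwall lemma closes it in one step. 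The advantage of your route is that it avoids the compactness argument and yields, in principle, an explicit dependence of the constant on $\|V\|_\infty$, $T$, $\alpha$, $d$; the paper's route has the virtue of relying only on the classical $L^p$ and H\"older estimates already quoted from \cite{LSU}, without invoking semigroup machinery or the singular Gronwall lemma. One small point of care in your write-up: the Duhamel bound is pointwise in $t$ on $\|w(t)\|_{1+\alpha}$, and passing to $\phi(t)=\sup_{s\le t}\|w(s)\|_{1+\alpha}$ uses the monotonicity of $\phi$ together with the change of variables $r=t-s$; this is routine but worth a line.
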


This kind of estimate is standard in the literature: for instance Theorem IV.9.1 of \cite{LSU} (and its Corollary) states that $Dw$ is bounded in $C^{\beta/2,\beta}$ for any $\beta\in (0,1)$. However the bound might depend on the vector field $V$ and not only on its norm. We only check this is not the case. 

\begin{proof} Let us first check that the result holds for an homogenous initial datum. More precisely, we prove in a first step that, if $w$ solve \eqref{eq:appen} with $w(0,\cdot)=0$, then $\|Dw\|_\infty\leq C \|f\|_\infty$, where the constant $C$ depends on $\|V\|_\infty$, $T$ and $d$ only. For this we argue by contradiction and assume for a while that there exists $V_n$ and $f_n$, bounded in $L^\infty$, and $w_n$ such that 
$$
\left\{\begin{array}{l}
\ds \partial_t w_n - \Delta w_n + V_n\cdot Dw_n =f_n \qquad {\rm in}\; [0,T]\times \T^d\\
\ds w_n(0,x)=0\qquad {\rm in}\; \T^d.
\end{array}\right.
$$
with $k_n:=\|Dw_n\|_\infty\to +\infty$. We set $\tilde w_n:= w_n/k_n$, $\tilde f_n:= f_n/k_n$. Then $\tilde w_n$ solves the heat equation with a right-hand side $\tilde f_n-V_n\cdot D\tilde w_n$ which is bounded in $L^\infty$. By standard estimates on the heat potential (see (3.2) of Chapter 3 in \cite{LSU}), $\partial_t \tilde w_n$ and $D^2\tilde w_n$ are bounded in $L^p$ for any $p$ independently of $n$. Then a Sobolev type inequality (Lemma II.3.3 in \cite{LSU}) implies that $D\tilde w_n$ is bounded in $C^{\beta/2,\beta}$ independently of $n$ for any $\beta\in (0,1)$. On the other hand, $(\tilde f_n)$ tends to $0$ in $L^2$ and, by standard energy estimates, $(D\tilde w_n)$ tends to $0$ in $L^2$. This is in contradiction with the fact that $\|D\tilde w_n\|_\infty=1$ and that $D\tilde w_n$ is bounded in $C^{\beta/2,\beta}$. So we have proved that there exists a constant $ C$, depending on $\|V\|_\infty$, $d$ and $T$ only, such that the solution to \eqref{eq:appen} with $w(0,\cdot)=0$ satisfies $\|Dw\|_\infty\leq  C\|f\|_\infty$. Using the same argument on the the heat potential as above yields to 
$$
\|Dw\|_{C^{\beta/2,\beta}}\leq C_\beta \|f-V\cdot Dw\|_\infty\leq C_\beta  \|f\|_\infty, 
$$
where $C_\beta$ depends on $\|V\|$, $d$, $T$ and $\beta$ only. 

We now remove the assumption that $w_0=0$. We rewrite $w$ as the sum $w=w_1+w_2$ where $w_1$ solves the heat equation with initial condition $w_0$ and $w_2$ solves equation \eqref{eq:appen} with  right-hand side $f-V\cdot Dw_1$ and initial condition $w_2(0,\cdot)=0$. By maximum principle, we have 
$$
\sup_{t\in [0,T]} \|Dw_1(t)\|_{\alpha}\leq C\|Dw_0\|_{\alpha}.
$$
By the first step of the proof, we also have, for any $\beta\in (0,1)$,  
$$
\|Dw_2\|_{C^{\beta/2,\beta}}\leq C_\beta\|f-V\cdot Dw_1\|_\infty \leq C_\beta(\|f\|_\infty+ \|Dw_0\|_{\alpha}). 
$$
Choosing $\beta=\alpha$ then gives the result. 
\end{proof}


\bibliographystyle{plain}

\end{document}